\newcounter{tmp}
\theoremstyle:=definition,remark,plain\do{%
        \expandafter\g@addto@macro\csname th@\theoremstyle\endcsname{%
            \addtolength\thm@preskip\parskip
            }%
        }
\z@skip \halign{\relax\hfil\txtline@@{##}\hfil\cr\leavevmode#1\crcr}}}
\theoremstyle{definition}
\newtheorem{thm}{Theorem}
\numberwithin{thm}{section}
\newtheorem{lem}[thm]{Lemma}
\newtheorem{cor}[thm]{Corollary}
\newtheorem{defn}[thm]{Definition}
\newtheorem{propn}[thm]{Proposition}
\newtheorem*{thm*}{Theorem}
\newtheorem{notn}[thm]{Notation}
\newtheorem*{qn*}{Question}
\newtheorem{conjecture}[thm]{Conjecture}
\newtheorem{props}[thm]{Properties}
\theoremstyle{remark}
\newtheorem{rk}[thm]{Remark}
\newtheorem{rks}[thm]{Remarks}
\newtheorem{ex}[thm]{Example}
\newcounter{parentnumber}
\DeclareMathOperator{\gr}{gr}
\newcommand{\id}{\mathrm{id}}
\newcommand{\Spec}{\mathrm{Spec}}
\renewcommand{\O}{\mathcal{O}}
\newcommand\blfootnote[1]{%
  \begingroup
  \renewcommand\thefootnote{}\footnote{#1}%
  \addtocounter{footnote}{-1}%
  \endgroup
}
\begin{document}

\numberwithin{equation}{subsection}
\binoppenalty=\maxdimen
\relpenalty=\maxdimen

\title{Skew power series rings over a prime base ring}

\author{Adam Jones\thanks{Alan Turing Building, University of Manchester, M13 9PL, \texttt{adam.jones-5@manchester.ac.uk}} \and William Woods\thanks{Pathways Department, University of Essex, CO4 3SQ,  \texttt{billy.woods@essex.ac.uk}; corresponding author}}

\maketitle
\begin{abstract}
\noindent In this paper, we investigate the structure of skew power series rings of the form $S = R[[x;\sigma,\delta]]$, where $R$ is a complete, positively filtered ring and $(\sigma,\delta)$ is a skew derivation respecting the filtration. Our main focus is on the case in which $\sigma\delta = \delta\sigma$, and we aim to use techniques in non-commutative valuation theory to address the long-standing open question: if $P$ is an invariant prime ideal of $R$, is $PS$ a prime ideal of $S$? When $R$ has characteristic $p$, our results reduce this to a finite-index problem. We also give preliminary results in the ``Iwasawa algebra" case $\delta = \sigma - \mathrm{id}_R$ in arbitrary characteristic. A key step in our argument will be to show that for a large class of Noetherian algebras, the nilradical is ``almost" $(\sigma,\delta)$-invariant in a certain sense.\blfootnote{\emph{2010 Mathematics Subject Classification}: 16S36, 16S99, 16W20, 16W25, 16W70.}
\end{abstract}

\tableofcontents

\pagebreak

\section*{Introduction}

\subsection*{Background}

Let $R$ be a Noetherian ring, complete with respect to a filtration topology, and let $(\sigma,\delta)$ be a continuous skew derivation of $R$. Assuming that the filtration is positive and $(\sigma,\delta)$ is compatible with the filtration in the appropriate sense defined below (Definition \ref{defn: compatible}), we are able to define the \emph{skew power series ring}:

\begin{equation}
\displaystyle R[[x;\sigma,\delta]]:=\left\{\sum_{n\geq 0}{r_nx^n}:r_n\in R\right\},
\end{equation}

a complete ring whose multiplication is given by $xr=\sigma(r)x+\delta(r)$ for each $r\in R$.

We consider the question: under what conditions is $R[[x;\sigma,\delta]]$ a prime ring? More generally, we aim to address and partially prove the following conjecture: 

\begin{conjecture}\label{main}

If $P$ is a prime, $(\sigma,\delta)$-invariant ideal of $R$, and $S:=R[[x;\sigma,\delta]]$, then $SP$ is a prime ideal of $S$.

\end{conjecture}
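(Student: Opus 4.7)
The plan is first to reduce to the case $P = 0$. Since $P$ is $(\sigma,\delta)$-invariant, both $\sigma$ and $\delta$ descend to $\bar R := R/P$, and the compatibility of $(\sigma,\delta)$ with the positive filtration together with completeness of $R$ should give $SP = PS$ equal to the closed ideal of power series having all coefficients in $P$, hence a topological isomorphism $S/SP \cong \bar R[[x;\bar\sigma,\bar\delta]]$. So it is enough to prove: if $R$ is prime, then so is $S$.

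For the main step I would follow the non-commutative valuation approach flagged in the abstract. The idea is to combine the filtration on $R$ with the $x$-adic filtration on $S$ into a single weight function $w$: for $f = \sum_{n\geq 0} r_n x^n$, try $w(f) = \inf_n \bigl(v(r_n) + nc\bigr)$ for a suitable positive constant $c$, chosen so that the ``error term'' $\delta$ carries strictly less weight than the ``main term'' $\sigma x$ in the commutation rule $xr = \sigma(r) x + \delta(r)$. If $w$ behaves as a valuation on $S$ (multiplicative on nonzero elements), then the leading-symbol map descends to the associated graded ring, which should simplify to a skew polynomial ring of the form $(\gr R)[\bar x;\bar\sigma]$ with the $\delta$-contribution absorbed into lower-order pieces. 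Classical results on skew polynomial rings with an automorphism then yield primeness of this graded ring when $\gr R$ is prime, and a standard leading-symbol argument for two-sided ideals transfers primeness back to $S$ itself.

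The main obstacle will be establishing the multiplicativity of $w$: without it one has only the subadditive bound $w(fg) \geq w(f) + w(g)$, which is insufficient for leading symbols of ideals to multiply cleanly. Such multiplicativity can fail if $\sigma$ fixes parts of $\gr R$ of positive degree, or if $\delta$ fails to strictly drop weight, and this is presumably why the abstract signals a characteristic split: in characteristic $p$ one hopes to pass to a finite-index subring on which $\sigma^{p^n}$ is close enough to $\id$ that a suitable $w$ does exist, while in mixed characteristic one restricts to the Iwasawa setting $\delta = \sigma - \id$, where a substitution of the form $x \mapsto x-1$ can potentially conjugate the problem to a pure-automorphism skew power series ring. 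I would also expect the ``almost $(\sigma,\delta)$-invariance of the nilradical'' announced in the abstract to be the key technical input, controlling those elements whose leading symbols land in nilpotent pieces of $\gr R$, where the naive graded argument breaks down.
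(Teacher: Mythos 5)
You should note first that the statement you are proving is the paper's Conjecture \ref{main}, which the paper itself does \emph{not} prove: it only establishes Theorem \ref{A}, the weaker statement that some $R^b[[x^{p^N};\sigma^{p^N},\delta^{p^N}]]$ (or the analogous $p^N$-th Iwasawa subring when $\delta=\sigma-\mathrm{id}$) is prime, i.e.\ a finite-index version of the conjecture. Your sketch does correctly reproduce several ingredients of the paper's strategy: the reduction to $P=0$ is exactly Lemma \ref{quotient}; your weight $w(f)=\inf_n(v(r_n)+nc)$ is the paper's $f_u$ with $c=\tfrac12$ (Proposition \ref{propn: skew-filtration}); and with the paper's strong notion of compatibility ($\deg(\sigma-\mathrm{id})>0$, $\deg(\delta)>0$) the associated graded ring is simply $(\gr R)[Z]$ with trivial induced $\sigma$-action, so no multiplicativity of $w$ is needed --- primeness of the graded ring of the coefficient ring suffices, via Lemma \ref{gr-prime}.

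The genuine gap is in the input to that graded argument: you implicitly need a filtration on $R$ (or on a ring closely tied to $R$) whose associated graded ring is \emph{prime}, but the given filtration $w$ has $\gr_w(R)$ commutative and, in general, far from prime even when $R$ is prime, so the leading-symbol argument never gets started. Manufacturing such a filtration is the entire technical content of the paper's Theorem \ref{B}: one microlocalises at a minimal prime $\mathfrak{q}$ of $\gr_w(R)$ to get $Q(R)$ with a Zariskian filtration $w'$, completes to an artinian ring $Q'$, passes to a quotient by a suitable maximal ideal, and equips a maximal order $\mathcal{O}$ with its $J(\mathcal{O})$-adic filtration $u$ so that $\gr_u Q(R)$ is prime; primeness then descends from $Q^b[[x;\sigma,\delta]]$ to $R^b[[x;\sigma,\delta]]$ by a Noetherian chain argument (Proposition \ref{two-sided}), not by leading symbols on $S$ directly. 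Crucially, compatibility of the skew derivation with $u$ can only be arranged after replacing $(\sigma,\delta)$ by $(\sigma^{p^N},\delta^{p^N})$ --- both to make an invariant maximal ideal exist (Theorem \ref{invariant maximal ideal}, together with Theorem \ref{C} on the $\delta^{p^M}$-core) and to force $\widehat{\sigma}^{p^\ell}-\mathrm{id}$ and $\widehat{\delta}^{p^\ell}$ into $J(\mathcal{O})^2$ (Proposition \ref{propn: raising to pth powers, case Q-hat simple}). This is exactly the obstruction you wave at in your final paragraph but do not resolve, and it is why neither your sketch nor the paper yields the conjecture itself: what can be extracted by this route is only the finite-index statement of Theorem \ref{A} and, in the Iwasawa case, semiprimeness (or prime nilradical) of $R^b[[x;\sigma,\sigma-\mathrm{id}]]$ via the crossed-product decomposition of Corollary \ref{cor: iwasawa algebra is semiprime}.
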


The case when $\delta = 0$ is already known in many cases: see \cite[Proposition 2.8(ii)]{schneider-venjakob-localisations} and \cite[1.4.5]{MR}.

Note that since $SP$ is a two-sided ideal of $S$, in the notation of the conjecture, and the quotient of $S$ by $SP$ is a skew power series ring defined over $R/P$ (cf. \cite[Lemma 3.14]{letzter-noeth-skew}), this conjecture is equivalent to the statement that any skew power series ring over a prime ring is prime.

A related question was asked by Letzter \cite[3.19]{letzter-noeth-skew}. In the setting of \cite{letzter-noeth-skew}, $R$ is $\mathfrak{i}$-adically complete with respect to some ideal $\mathfrak{i}$, and we have $\sigma(\mathfrak{i}^n) \subseteq \mathfrak{i}^n$ and $\delta(\mathfrak{i}^n) \subseteq \mathfrak{i}^{n+1}$ for all $n$. Letzter asks, under the assumption that $\sigma$ extends to compatible automorphisms of $R[x;\sigma,\delta]$ and $R[[x;\sigma,\delta]]$: is $R[[x;\sigma,\delta]]$ a $\sigma$-prime ring?

We will primarily restrict ourselves to the case where $\delta\sigma = \sigma\delta$. This is satisfied in the cases of many natural skew power series rings of interest, including Iwasawa algebras: see Examples \ref{ex: iterated local SPS ring}--\ref{ex: iwasawa algebras} below. In this setting, $\sigma$ does indeed extend to compatible automorphisms of $R[x;\sigma,\delta]$ and $R[[x;\sigma,\delta]]$ by setting $\sigma(x) = x$, so Letzter's hypotheses are satisfied.

(Some of our intermediate results, which may be of independent interest, extend easily to the setting where $(\sigma,\delta)$ is a \emph{$q$-skew derivation}: that is, $\delta\sigma = q\sigma\delta$ for some unit $q\in R^\times$ satisfying $\sigma(q) = q$ and $\delta(q) = 0$. We will give the more general results where applicable.)

A positive answer to Conjecture \ref{main} would be interesting for two reasons. Firstly, corresponding results in the case of skew \emph{polynomial} rings are foundational in all developments of the theory: the similar results proved as e.g. \cite[Theorem 4.2]{irving1}, \cite[Theorem 2.2]{irving2}, \cite[Proposition 3.3]{goodearl-skew-poly-and-quantized}, \cite[Proposition 3.3]{goodearl-letzter} are all used crucially in the contexts of those papers. Secondly, the corresponding result in the case of \emph{polycyclic group algebras} is a key lemma that plays an important role in our understanding of their prime ideals: see \cite[Lemma 9]{roseblade}. Unfortunately, their non-discrete analogues -- in the world of filtered skew power series rings, and the world of completed group rings of compact $p$-adic Lie groups (i.e. \emph{Iwasawa algebras}), respectively -- are missing.

We note that Bergen and Grzeszczuk have studied similar questions for \emph{discrete} skew power series rings \cite{bergen-grzeszczuk-skew}. However, in order to ensure that their skew power series rings exist, they impose the extra hypothesis that $\delta$ should be \emph{locally nilpotent}. In general, this will not be the case for the examples of interest to us. For instance, the $\mathbb{F}_p$-linear derivation $\delta$ on $R = \mathbb{F}_p[[x]]$ defined by $\delta(x) = x^{p+1}$ is \emph{topologically} nilpotent (so that the skew power series ring $R[[y; \delta]]$ may be defined) but is not \emph{locally} nilpotent. In the setting of Iwasawa algebras, $\delta$ is almost never locally nilpotent.

\subsection*{Results}

Throughout the paper, fix a prime number $p$. Our main result, stated below, gives strong evidence for Conjecture \ref{main}.

In the statement of this result, and elsewhere in the paper, we will write $R^b[[x; \sigma, \delta]]$ for the \emph{bounded} skew power series ring over $R$, as defined in \S \ref{subsec: bounded SPSRs} below. This ring is the necessary analogue of $R[[x;\sigma,\delta]]$ in the case when the filtration is not necessarily positive, and does not necessarily contain arbitrary power series with coefficients in $R$, but only those whose coefficients satisfy an appropriate convergence condition. However, note that if $R$ is positively filtered then $R^b[[x; \sigma, \delta]] = R[[x; \sigma, \delta]]$ always, which is usually the case of greatest interest to us.

\begingroup
\setcounter{tmp}{\value{thm}}
\setcounter{thm}{0} 
\renewcommand\thethm{\Alph{thm}}
\begin{thm}\label{A}
Let $R$ be a prime Noetherian algebra over $\mathbb{Z}_p$, and $w: R\to\mathbb{Z}\cup\{\infty\}$ a complete, separated Zariskian filtration, such that $\gr_w(R)$ is commutative (and Noetherian), $w(p^n)=nw(p)\geq n$ for all $n\in\mathbb{N}$, and the non-zero graded part $\gr_w(R)_{\neq 0}$ contains a non-nilpotent element. Suppose also that $(\sigma,\delta)$ is a skew derivation on $R$ compatible with $w$ and satisfying $\sigma\delta = \delta\sigma$.
\begin{enumerate}[label=(\alph*)]
\item If $p=0$ in $R$ (i.e. $R$ is an $\mathbb{F}_p$-algebra), then there exists $N\in \mathbb{N}$ such that $R^{b}[[x^{p^N}; \sigma^{p^N}, \delta^{p^N}]]$ is prime.
\item If $\delta = \sigma - \mathrm{id}$, then there exists $N\in \mathbb{N}$ such that $R^{b}[[x_{(N)}; \sigma_{(N)}, \delta_{(N)}]]$ is prime, where $x_{(N)} = (x+1)^{p^N} - 1$, $\sigma_{(N)} = \sigma^{p^N}$ and $\delta_{(N)} = \sigma_{(N)} - \mathrm{id}$.
\end{enumerate}
\end{thm}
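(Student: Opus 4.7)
My strategy is to transfer the problem to the commutative associated graded ring $\gr_w(R)$, exploit its finite set of minimal primes, and then lift back to $S = R^b[[y; \sigma_{(N)}, \delta_{(N)}]]$ using that the filtration is complete, separated, and Zariskian (writing $y = x^{p^N}$ in case (a) and $y = x_{(N)}$ in case (b)). To begin, I would extend $w$ to a filtration on $S$ so that $\gr(S)$ admits a description in terms of $\gr_w(R)$ together with the induced skew derivation $(\bar\sigma_{(N)}, \bar\delta_{(N)})$. Since $\gr_w(R)$ is commutative Noetherian, it has only finitely many minimal primes $\mathfrak{p}_1, \ldots, \mathfrak{p}_k$, and the graded automorphism $\bar\sigma$ permutes this finite set.

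For part (a), I would use that in characteristic $p$ the identity $\sigma\delta = \delta\sigma$ combined with the Leibniz-type formula $\delta^n(ab) = \sum_{i=0}^n \binom{n}{i} \sigma^{n-i}\delta^i(a) \delta^{n-i}(b)$ shows that $(\sigma^p, \delta^p)$ is again a skew derivation, since all middle binomial coefficients vanish mod $p$; by induction so is each $(\sigma^{p^N}, \delta^{p^N})$. Combined with the paper's earlier ``almost $(\sigma,\delta)$-invariance of the nilradical'' principle (advertised in the abstract), the $(\bar\sigma, \bar\delta)$-orbit of each $\mathfrak{p}_i$ should be finite, so for $N$ sufficiently large $(\bar\sigma^{p^N}, \bar\delta^{p^N})$ fixes every minimal prime. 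For part (b), I would exploit the identity $(x+1)r = \sigma(r)(x+1)$, which makes $(x+1)^{p^N}$ intertwine via $\sigma^{p^N}$; the substitution $x_{(N)} = (x+1)^{p^N} - 1$ then yields the stated skew derivation structure with no characteristic hypothesis required, and the same orbit-fixing argument applies to $\bar\sigma^{p^N}$ alone.

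Once every minimal prime is stabilised, each quotient $\gr_w(R)/\mathfrak{p}_i$ is a commutative Noetherian integral domain carrying the induced skew derivation, and the corresponding Ore extension $(\gr_w(R)/\mathfrak{p}_i)[\bar y; \bar\sigma_{(N)}, \bar\delta_{(N)}]$ is prime by classical theory. Here I would invoke the hypothesis that $\gr_w(R)_{\neq 0}$ contains a non-nilpotent element to set up the non-commutative valuation and microlocalisation machinery developed earlier in the paper, letting me work one minimal prime at a time on a sufficiently rich graded slice.

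The main obstacle, I expect, is the lift from this graded picture to primality of $S$ itself. The difficulty is that $\gr_w(R)$ will in general have several minimal primes, so $\gr(S)$ is genuinely not prime even after passing to $(\sigma^{p^N}, \delta^{p^N})$; I must argue that any putative factorisation $IJ = 0$ with $I, J \lhd S$ nonzero would, after taking symbols, force the supports of $I$ and $J$ at the graded level into minimal primes whose lifts contradict the primality of $R$. The completeness and separatedness of $w$ will be essential for passing from statements about symbols to statements about the ideals themselves, and the Zariskian hypothesis should supply the ideal-theoretic control needed to rule out pathologies introduced by the graded microlocalisation.
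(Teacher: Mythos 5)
There is a genuine gap, and it sits exactly where you flag it: the passage from the graded picture to primeness of $S$. Your plan works with $\gr_w(R)$ and its minimal primes, but note that compatibility of $(\sigma,\delta)$ with $w$ forces the induced maps on $\gr_w(R)$ to be $\mathrm{id}$ and $0$ (the paper records that $\gr_w(x)=\gr_w(\sigma(x))$ for all $x$), so your orbit-stabilisation step is vacuous: every minimal prime of $\gr_w(R)$ is already fixed with $N=0$, and $\gr_{f_w}(S)\cong \gr_w(R)[Z]$ carries no trace of $\sigma$ or $\delta$. Consequently your proposal cannot explain why any $N>0$ is needed, and the ``lift'' step --- arguing that a factorisation $IJ=0$ in $S$ contradicts primeness of $R$ by looking at supports over the minimal primes of $\gr_w(R)$ --- is precisely the content of the theorem and is not supplied. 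If $\gr_w(R)$ is not prime (which is the typical situation), no direct symbol argument of the kind you sketch is known, and the paper does not attempt one.

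What the paper actually does is replace the filtration rather than analyse $\gr_w(R)$ prime-by-prime: it microlocalises $R$ at the saturated lift of a single minimal prime $\mathfrak{q}$ of $\gr_w(R)$ not containing $\gr_w(R)_{\neq 0}$ to realise $Q(R)$ with a Zariskian filtration $w'$, completes to an artinian ring $Q'$, passes to a $z$-adic filtration, finds a maximal ideal $M$ of $Q'$ invariant under $\sigma^{p^k}$ (this is where the first genuine use of $p$-th powers occurs, via the idempotent argument in \S 4.3, and where Theorem \ref{C} is invoked in characteristic $p$ to get $\delta^{p^t}(M)\subseteq M$), and then equips the simple quotient $\widehat{Q}=Q'/M$ with the $J(\mathcal{O})$-adic filtration $u$ of a maximal order $\mathcal{O}$, raising to further $p$-th powers to make $(\sigma^{p^N},\delta^{p^N})$ compatible with $u$ (Proposition \ref{propn: raising to pth powers, case Q-hat simple}). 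The point of all this is that $\gr_u Q(R)$ \emph{is} prime and Noetherian (Theorem \ref{B}), so the completed ring $Q^b[[x^{p^N};\sigma^{p^N},\delta^{p^N}]]$ is prime by the elementary graded criterion, and primeness descends to $R^b[[x^{p^N};\sigma^{p^N},\delta^{p^N}]]$ by the Noetherian argument of Proposition \ref{two-sided} and Corollary \ref{conclusion} (Theorem \ref{thm: key for Thm A}). None of this machinery --- the construction of $u$, the invariant maximal ideal, the maximal order, or the descent from $Q^b[[\cdot]]$ to $R^b[[\cdot]]$ --- is present in your outline beyond a passing mention of ``microlocalisation machinery,'' so the proposal as it stands does not constitute a proof.
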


\textit{Remarks.} We comment on the various hypotheses of this theorem.

\begin{enumerate}[label=(\roman*),noitemsep]
\item A complete filtration $w$ on $R$ is Zariskian if and only if $\gr_w(R)$ is Noetherian \cite[Chapter II, Theorem 2.1.2 and Proposition 2.2.1]{LVO}. We have included this redundant hypothesis here and elsewhere in the paper for clarity. The hypothesis that $\gr_w(R)_{\neq 0}$ contain a non-nilpotent element is required in \S \ref{subsec: constructing w'} to choose the minimal prime there called $\mathfrak{q}$. All of these hypotheses are mild in our primary cases of interest, as in Examples \ref{ex: iterated local SPS ring}--\ref{ex: iwasawa algebras}.
\item Statement (a) makes sense because if $R$ has characteristic $p$ and $\sigma\delta=\delta\sigma$ then $(\sigma^{p^m},\delta^{p^m})$ is a skew derivation, compatible with $w$, for all $m\in\mathbb{N}$. The characteristic $0$ case appears to require different treatment, so in statement (b) (which holds in characteristic $p$ and characteristic $0$) we restrict to the case $\delta = \sigma - \mathrm{id}$, as it is the case of interest for Iwasawa algebras (see Example \ref{ex: iwasawa algebras}). Note that, if $\mathrm{char}(R)=p$ and $\delta=\sigma-\mathrm{id}$, then statements (a) and (b) are equivalent.
\end{enumerate}

Of course, if we could take $N=0$ then Conjecture \ref{main} would follow immediately, at least in positive characteristic. In characteristic 0, unfortunately, some issues arise in the case where $\delta\neq\sigma-\mathrm{id}$ that may prove difficult to resolve.

In the case where $\delta=\sigma-\mathrm{id}$, however, we can deduce an immediate corollary to Theorem \ref{A} by realising $R^{b}[[x;\sigma,\delta]]$ as a crossed product of the appropriate sub-skew power series ring with $\mathbb{Z}/p^n\mathbb{Z}$. Explicitly:
\begin{center}
$\displaystyle R^b[[x;\sigma,\sigma-\id]] = \bigoplus_{i=0}^{p^N - 1}\left( R^b[[x_{(N)}; \sigma_{(N)}, \delta_{(N)}]]\right)(x+1)^i$.
\end{center}

(See Example \ref{ex: iwasawa algebras} below for an example of this.) Now we may apply \cite[Theorem 4.4, Proposition 16.4]{passmanICP} to get:

\endgroup

\begin{cor}\label{cor: iwasawa algebra is semiprime}

Let $R$ be a prime, Noetherian algebra over $\mathbb{Z}_p$ satisfying the hypotheses of Theorem \ref{A}, and suppose that $\delta=\sigma-\mathrm{id}$. Then:

\begin{itemize}

\item If $\mathrm{char}(R)=p$, then $R^{b}[[x;\sigma,\delta]]$ has prime nilradical.

\item If $\mathrm{char}(R) = 0$, then $R^b[[x;\sigma,\delta]]$ is semiprime.\qed
\end{itemize}

\end{cor}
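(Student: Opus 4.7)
The strategy is to combine Theorem \ref{A}(b) with Passman's structure theory of finite crossed products, via the decomposition displayed immediately before the corollary. By Theorem \ref{A}(b), there exists $N \in \mathbb{N}$ such that $T := R^b[[x_{(N)}; \sigma_{(N)}, \delta_{(N)}]]$ is prime. Setting $u := x+1$, the relation $\delta = \sigma - \id$ gives $u \cdot r = \sigma(r) \cdot u$ for all $r \in R$, which extends continuously to $u t u^{-1} = \sigma(t)$ for all $t \in T$. Note that $\sigma$ restricts to an automorphism of $T$: the extension $\sigma(x) = x$ forces $\sigma(x_{(N)}) = x_{(N)}$, and $\sigma$ commutes with $\sigma_{(N)}$ and $\delta_{(N)}$. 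Moreover, $u^{p^N} = 1 + x_{(N)} \in T^{\times}$, since $x_{(N)}$ is topologically nilpotent in the filtered power series ring. Consequently the displayed decomposition realises $S := R^b[[x;\sigma,\delta]]$ as a crossed product $S = T \ast G$ with $G = \langle \overline{u} \rangle \cong \mathbb{Z}/p^N\mathbb{Z}$, where conjugation by $\overline{u}$ acts on $T$ as $\sigma|_T$ and the 2-cocycle is encoded by $\overline{u}^{p^N} = 1 + x_{(N)}$.

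Now apply Passman's results to this crossed product. In characteristic $0$, the order $|G| = p^N$ is a central non-zero-divisor of the prime ring $T$ (since $T$ is a $\mathbb{Z}_p$-algebra and $p$ is central and nonzero there); passing to the classical ring of quotients $Q(T)$, which is simple Artinian by Goldie's theorem, the element $|G|$ becomes a unit, so Passman's Maschke-type result \cite[Theorem 4.4]{passmanICP} shows that $Q(T) \ast G$ is semisimple Artinian, forcing $S = T \ast G$ to be semiprime. In characteristic $p$, the group $G$ is a finite $p$-group acting on the prime ring $T$, and \cite[Proposition 16.4]{passmanICP} applied to $S = T \ast G$ states precisely that the nilradical of $S$ is a prime ideal.

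The only substantive bookkeeping is the verification of the crossed product structure, namely that $u^{p^N} \in T^{\times}$ and that conjugation by $u$ preserves $T$; both follow directly from the topological nilpotence of $x_{(N)}$ and the identity $\delta = \sigma - \id$. After this, the corollary is an immediate application of the cited Passman theorems; I do not anticipate any serious obstacle beyond carefully matching the conventions of \cite{passmanICP} to the crossed product structure on $S$ exhibited above.
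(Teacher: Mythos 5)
Your argument is exactly the paper's (largely implicit) proof: realise $S=R^b[[x;\sigma,\sigma-\id]]$ as the crossed product $T\ast(\mathbb{Z}/p^N\mathbb{Z})$ with $T=R^b[[x_{(N)};\sigma_{(N)},\delta_{(N)}]]$ prime by Theorem \ref{A}(b), via the displayed decomposition $\bigoplus_{i=0}^{p^N-1}T(x+1)^i$, and then quote \cite[Theorem 4.4, Proposition 16.4]{passmanICP} for the characteristic $0$ and characteristic $p$ statements respectively. Your additional verifications (that $u^{p^N}=1+x_{(N)}$ is a unit of $T$ and that conjugation by $u=x+1$ preserves $T$) correctly fill in details the paper leaves unstated, so the proposal is sound and essentially identical in approach.
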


In fact, in the case where $\mathrm{char}(R) = 0$ in the above corollary, we should in fact be able to prove that $R^b[[x;\sigma,\sigma-\mathrm{id}]]$ is in fact prime, but we hope to include this proof in a later article which develops on the ideas in this paper further.

\begingroup
\setcounter{tmp}{\value{thm}}
\setcounter{thm}{1} 
\renewcommand\thethm{\Alph{thm}}

The proof of Theorem \ref{A} uses non-commutative valuation theory. Roughly speaking, we show that the skew power series ring is prime by showing that it carries a complete filtration $f$, with a prime associated graded ring, from which the result follows. The filtration $f = f_u$ depends on a filtration $u$ on $R$, and the appropriate choice of $u$ is given by our main technical result:

\begin{thm}\label{B}
Let $R$ be a prime Noetherian algebra over $\mathbb{Z}_p$, and $w: R\to\mathbb{Z}\cup\{\infty\}$ a complete, separated Zariskian filtration, such that $\gr_w(R)$ is commutative (and Noetherian), $w(p^n)=nw(p)\geq n$ for all $n\in\mathbb{N}$, and the non-zero graded part $\gr_w(R)_{\neq 0}$ contains a non-nilpotent element. Suppose also that $(\sigma,\delta)$ is a skew derivation on $R$ compatible with $w$ and satisfying $\sigma\delta = \delta\sigma$.

Then $(\sigma,\delta)$ extends to the Goldie ring of quotients $Q(R)$, and there exists a separated filtration $u$ on $Q(R)$ such that the embedding $(R,w)\to (Q(R),u)$ is continuous, $\gr_u(Q(R))$ is prime and Noetherian, and:
\begin{enumerate}[label=(\alph*)]
\item If $\mathrm{char}(R) = p$, then there exists $N\in \mathbb{N}$ such that $(\sigma^{p^N}, \delta^{p^N})$ is compatible with $u$.

\item If  $\delta = \sigma - \mathrm{id}$, then there exists $N\in \mathbb{N}$ such that $(\sigma^{p^N}, \sigma^{p^N} - \mathrm{id})$ is compatible with $u$.
\end{enumerate}

\end{thm}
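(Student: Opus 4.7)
My strategy is to build $u$ in three stages: first construct a refined filtration $w'$ on $R$ whose associated graded ring effectively kills a carefully chosen minimal prime of $\gr_w(R)$; then extend $w'$ to a filtration $u$ on the Goldie quotient $Q(R)$; finally verify that a suitable $p^N$-th iterate of $(\sigma,\delta)$ is compatible with $u$. Since $\gr_w(R)$ is commutative Noetherian and contains some non-nilpotent homogeneous element $s$ of nonzero degree, there exists a minimal prime $\mathfrak{q}$ of $\gr_w(R)$ not containing $s$; this is the $\mathfrak{q}$ foreshadowed in the remarks to Theorem \ref{A}. The induced automorphism $\overline\sigma$ of $\gr_w(R)$ permutes the finite set of minimal primes, so some power $\overline\sigma^m$ fixes $\mathfrak{q}$.

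For Step 1, I would refine $w$ using the $\mathfrak{q}$-adic depth on $\gr_w(R)$: roughly speaking, let $w'(r)$ combine $w(r)$ with the largest $j$ such that $\gr_w(r) \in \mathfrak{q}^j$, either lexicographically or after rescaling to a single $\mathbb{Z}$-value. The completeness and Zariskian property of $w$, together with Noetherianness of $\gr_w(R)$, should propagate to $w'$, and the role of $s$ is to ensure the rescaled filtration is non-degenerate so that $\gr_{w'}(R)$ is Noetherian and prime (essentially a graded quotient of $\gr_w(R)$ by $\mathfrak{q}$). The skew derivation then extends to $Q(R)$ by the standard formulae --- $\sigma$ by multiplicativity, $\delta$ via the twisted quotient rule --- and $w'$ extends to a filtration $u$ on $Q(R)$ by $u(ab^{-1}) = w'(a) - w'(b)$, justified by an Ore-type argument on homogeneous regular elements of $\gr_{w'}(R)$. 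The associated graded $\gr_u(Q(R))$ is thus a graded localization of a prime Noetherian graded ring, hence itself prime and Noetherian, and $u$ is separated with continuous embedding from $(R,w)$.

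For the compatibility of $(\sigma^{p^N}, \delta^{p^N})$ with $u$: by choosing $N$ so that $p^N$ is divisible by $m$, the power $\sigma^{p^N}$ fixes $\mathfrak{q}$ at the graded level and hence preserves $w'$ and so $u$. For $\delta^{p^N}$, the identity $\sigma\delta = \delta\sigma$ together with vanishing of intermediate binomial coefficients modulo $p$ makes $(\sigma^{p^N},\delta^{p^N})$ a skew derivation in characteristic $p$; the required increase $\delta^{p^N}(F^n_u) \subseteq F^{n+1}_u$ then follows because $\delta^{p^N}$ raises the $w$-coordinate of $u$ by $p^N$, dominating any possible drop in the $\mathfrak{q}$-adic coordinate. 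In the Iwasawa case (b), the identity $\delta_{(N)} = \sigma_{(N)} - \id$ is definitional, so $(\sigma_{(N)}, \delta_{(N)})$ is automatically a skew derivation in any characteristic, and compatibility reduces to showing $\sigma^{p^N}(r) - r$ lies in deep enough $u$-filtration, which exploits the Iwasawa-theoretic convergence of $\sigma^{p^N}$ to the identity on finite quotients.

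The main obstacle is Step 1: producing a clean definition of $w'$ whose associated graded is actually prime (not just reduced), and controlling $\delta$'s induced action on the $\mathfrak{q}$-adic refinement. Since $\delta$ is only filtered and not graded, its symbol in $\gr_w(R)$ need not preserve $\mathfrak{q}$, so estimating how $\delta$ shifts the $\nu_\mathfrak{q}$-coordinate of $u$ requires careful tracking of a potentially twisted action across minimal components. It is precisely the Frobenius-like behaviour of $\delta^{p^N}$ in characteristic $p$, or the Iwasawa identity in case (b), that eventually tames this action and forces the required compatibility at the cost of passing to $p^N$-th iterates rather than working with $\sigma$ and $\delta$ themselves.
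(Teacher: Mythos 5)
Your plan founders at exactly the point you flag as ``the main obstacle,'' and that obstacle is the actual content of the theorem. Refining $w$ by the $\mathfrak{q}$-adic order of symbols does \emph{not} produce a filtration whose associated graded ring is prime: the graded ring of such a refinement is governed by the $\mathfrak{q}$-adic/localised structure of $\gr_w(R)$ at the minimal prime $\mathfrak{q}$, not by the quotient $\gr_w(R)/\mathfrak{q}$. Concretely, the construction the paper actually performs at this stage is microlocalisation at the saturated lift $S$ of the homogeneous complement $T$ of $\mathfrak{q}$, which yields $S^{-1}R = Q(R)$ carrying a Zariskian filtration $w'$ with $\gr_{w'}(Q(R)) \cong T^{-1}\gr_w(R)$ --- and this ring is graded-local with nilpotent ideal $T^{-1}\mathfrak{q}$, so it is \emph{not} prime in general. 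Your subsequent claim that $\gr_u(Q(R))$ is prime because it is ``a graded localization of a prime Noetherian graded ring'' is therefore circular: the primeness of the input was never established, and your proposed $u(ab^{-1}) = w'(a)-w'(b)$ is not even well-defined without the Rees-ring/Ore machinery (the paper only proves $w'(s^{-1}r)=w'(r)-w(s)$ for $s$ in the saturated lift $S$). The paper obtains a prime associated graded ring only after a long further chain: complete $Q(R)$ at $w'$ to an artinian ring $Q'$, pass to the $z$-adic filtration for Ardakov's regular normal element $z$, quotient by a suitable maximal ideal $M$, choose a maximal order $\mathcal{O}$ in the simple artinian quotient $\widehat{Q}$, and take $u$ to be the $J(\mathcal{O})$-adic filtration, whose graded ring is prime by Ardakov's Theorem C. None of this is replaceable by the shortcut you sketch.

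A second, independent gap concerns invariance under $\delta$. Because $(\sigma,\delta)$ is compatible with $w$, the graded action of $\sigma$ on $\gr_w(R)$ is already trivial (so your ``choose $p^N$ divisible by $m$'' step is moot); the genuine difficulty is that $\sigma$ permutes the maximal ideals of the artinian completion $Q'$ nontrivially, and even once a $\sigma^{p^k}$-invariant maximal ideal $M$ is found (the paper's Theorem 4.18, which itself uses the $J(\mathcal{O})$-adic compatibility to force the orbit to be a single point), $\delta$ need \emph{not} preserve $M$. This is where Theorem C of the paper (the $\delta^{p^M}$-core analysis of minimal $\sigma$-prime ideals) is essential: it is what allows one to replace $\delta$ by $\delta^{p^t}$ so that $M$ becomes $(\sigma^{p^t},\delta^{p^t})$-invariant and the skew derivation descends to $\widehat{Q}=Q'/M$. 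Your heuristic that ``$\delta^{p^N}$ raises the $w$-coordinate by $p^N$, dominating any drop in the $\mathfrak{q}$-adic coordinate'' does not substitute for this: the compatibility that must be verified is with the $J(\mathcal{O})$-adic filtration on a quotient of a completion, and it is established in the paper through explicit intermediate filtrations ($v_{z,U}$, $v_{\overline{z},V}$, $v_{\overline{z},\mathcal{O}}$) together with the lemma that $d(\mathcal{O})\subseteq J(\mathcal{O})$ and $d(J(\mathcal{O}))\subseteq J(\mathcal{O})^2$ propagate to all powers $J(\mathcal{O})^n$. As written, your proposal identifies the correct starting point (the minimal prime $\mathfrak{q}$ avoiding the non-nilpotent homogeneous element) but leaves the two essential steps --- primeness of $\gr_u$ and invariance of the relevant ideal under a $p$-power of $\delta$ --- unproved.
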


\endgroup

The key difficulty in the proof of Theorem \ref{B} is the construction of the filtration $u$. Roughly speaking, we follow the construction of \cite[Theorem C]{ardakovInv}, which uses techniques in non-commutative algebra to construct well behaved filtrations on various ring extensions of $Q(R)$. Our major addition to the argument is the involvement of the skew derivation.

We will occasionally need to consider skew derivations $(\sigma,\delta)$ on rings $R$ that are artinian but not semisimple. In these cases, we hope to be able to pass to $R/N(R)$, where $N(R)$ is the (prime) radical of $R$, in order to use the well-established theory of semisimple artinian rings. However, while $N(R)$ is preserved by $\sigma$, it will not necessarily be preserved by $\delta$ even in the nicest cases, as the following counterexample (in which $\sigma$ is the identity automorphism) shows:

\begin{ex}
\cite[Introduction]{bergen-grzeszczuk-radicals} Let $R = \mathbb{F}_p[X]/(X^p)$, so that $N(R) = (X)$. Then there exists an $\mathbb{F}_p$-linear derivation $\delta$ such that $\delta(X) = 1 \not\in N(R)$.
\end{ex}

Our final technical result, however, shows that the situation is not hopeless:

\begingroup
\setcounter{tmp}{\value{thm}}
\setcounter{thm}{2} 
\renewcommand\thethm{\Alph{thm}}

\begin{thm}\label{C}
Let $R$ be a Noetherian algebra over a field, $(\sigma,\delta)$ a skew derivation on $R$ with $\sigma\delta = \delta\sigma$, and $I$ a minimal $\sigma$-prime ideal of $R$.
\begin{enumerate}[label=(\alph*)]
\item If $R$ is artinian, and $\mathrm{char}(R) = 0$, then $\delta(I) \subseteq I$.
\item If $\mathrm{char}(R) = p > 0$, then there exist $J \supseteq I$ and $M\in \mathbb{N}$ such that
\begin{enumerate}[label=(\roman*)]
\item $J$ is a minimal $\sigma^{p^M}$-prime ideal of $R$,
\item $I = \bigcap_{n\in \mathbb{Z}} \sigma^n(J)$, and this intersection is finite,
\item $\delta^{p^M}(J) \subseteq J$.\qed
\end{enumerate}
\end{enumerate}
\end{thm}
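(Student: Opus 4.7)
The overall strategy is to reduce modulo the nilradical $N = N(R)$ (nilpotent since $R$ is Noetherian), identify minimal $\sigma$-primes with $\sigma$-invariant central idempotents of the semisimple artinian quotient of $R/N$, and kill those idempotents using the skew derivation. The core computation is the following: for any ring $\bar{R}$ carrying a skew derivation $(\sigma,\delta)$ with $\sigma\delta = \delta\sigma$, and any central idempotent $\bar{e} \in \bar{R}$ with $\sigma(\bar{e}) = \bar{e}$, we have $\delta(\bar{e}) = 0$. Indeed, from $\bar{e}^2 = \bar{e}$ the skew Leibniz rule gives
\begin{equation*}
\delta(\bar{e}) = \sigma(\bar{e})\delta(\bar{e}) + \delta(\bar{e})\bar{e} = 2\bar{e}\delta(\bar{e})
\end{equation*}
(using centrality of $\bar e$); left-multiplying by $\bar{e}$ yields $\bar{e}\delta(\bar{e}) = 2\bar{e}\delta(\bar{e})$, hence $\bar{e}\delta(\bar{e}) = 0$, and therefore $\delta(\bar{e}) = 0$ in every characteristic. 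Consequently $\delta((1-\bar{e})\bar{R}) \subseteq (1-\bar{e})\bar{R}$, which encodes $\delta$-invariance of the corresponding preimage ideal.

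For \textbf{part (a)}, the first step is a skew analogue of Seidenberg--Amitsur: in characteristic $0$, $\delta(N) \subseteq N$. This follows by applying $\delta^n$ to a relation $x^k = 0$ for $x \in N$ (such $k$ exists uniformly since $N$ is nilpotent and finitely generated), using the invertibility of the binomial coefficients arising in the skew Leibniz expansion to iteratively force $\delta(x)$ into $N$. Since $R$ is artinian, $\bar{R} = R/N$ is semisimple artinian, and both $\sigma$ and $\delta$ descend to it. The minimal $\sigma$-prime $I/N$ corresponds to a $\sigma$-orbit $\mathcal{O}$ of simple factors of $\bar{R}$, whose associated central idempotents sum to a $\sigma$-invariant central idempotent $\bar{e}$ with $I/N = (1 - \bar{e})\bar{R}$. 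The core computation gives $\delta(\bar{e}) = 0$, whence $\delta(I/N) \subseteq I/N$, and lifting along $R \to R/N$ yields $\delta(I) \subseteq I + N = I$.

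For \textbf{part (b)}, the counterexample preceding the theorem shows that $\delta(N) \subseteq N$ can fail in characteristic $p$, so one replaces $\delta$ by $\delta^{p^M}$. By the standard modular Leibniz identity $\delta^p(ab) = \sigma^p(a)\delta^p(b) + \delta^p(a)b$ (using $\binom{p}{i} \equiv 0 \pmod{p}$ for $0 < i < p$), the pair $(\sigma^{p^M}, \delta^{p^M})$ is again a skew derivation for every $M$. The \textbf{key sublemma} is that there exists $M_0$ with $\delta^{p^{M_0}}(N) \subseteq N$; I would prove this by applying the single skew derivation $\delta^{p^M}$ to the finitely many nilpotent generators of $N$, exploiting the fact that high $p$-powers of $\delta$ progressively wash out its ``escape'' from $N$ (as illustrated by the toy example $R = \mathbb{F}_p[X]/(X^p)$, where already $\delta^p = 0$). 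Once the sublemma is in hand, take $M \geq M_0$ large enough that the $\sigma^{p^M}$-action on the finite set of minimal primes has stabilized, so that the $\sigma$-orbit $\mathcal{O}$ defining $I$ splits into $\sigma^{p^M}$-sub-orbits cyclically permuted by $\sigma$. Fix such a sub-orbit $\mathcal{O}'$ and define $J = \bigcap_{P \in \mathcal{O}'} P$; conditions (i) and (ii) then follow from the setup. For (iii), pass to $\bar{R} = R/N$ (where $\sigma^{p^M}, \delta^{p^M}$ descend by the sublemma) and then to its semisimple artinian Goldie quotient $Q(\bar{R})$; identify $J/N$ with the $(1 - \bar{f})$-component for the $\sigma^{p^M}$-invariant central idempotent $\bar{f} \in Q(\bar{R})$ associated to $\mathcal{O}'$, apply the core computation to obtain $\delta^{p^M}(\bar{f}) = 0$, and lift back to $\delta^{p^M}(J) \subseteq J + N = J$.

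The main obstacle is the sublemma in part (b): although the toy example suggests that sufficiently high $p$-powers of $\delta$ are tame enough to preserve $N$, a rigorous proof requires a careful interplay between the $p$-power Leibniz identity, the nilpotency exponent of $N$, and the finite generation of $N$ in the Noetherian ring $R$. I anticipate this to be the most delicate step, with the central-idempotent reduction then being essentially formal.
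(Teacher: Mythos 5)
Your part (a) is essentially sound and in fact takes a slightly cleaner route than the paper at the final step: where the paper quotes a lemma of Cauchon--Robson to see that $\delta$ preserves the ideal cut out by a $\sigma$-stable family of simple factors, you observe directly that a central idempotent $\bar e$ with $\sigma(\bar e)=\bar e$ satisfies $\delta(\bar e)=0$, whence $(1-\bar e)\bar R$ is $\delta$-stable; that computation is correct in every characteristic. The one place where your sketch is looser than it should be is the claim $\delta(N)\subseteq N$ in characteristic $0$: applying $\delta^n$ to $x^k=0$ for a single element $x$ only shows that certain \emph{products} of images of $\delta(x)$ lie in $N$, which does not by itself put $\delta(x)$ in $N$ since $R/N$ is merely semiprime. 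The correct argument (Corollary \ref{cor: prime radical preserved in char 0}, via Proposition \ref{propn: q-skew case when delta preserves nilpotent ideals}) first shows $\delta(N)^n\subseteq N$ for arbitrary $n$-fold products from $\delta(N)$, and then uses the fact that $N+\delta(N)$ is an \emph{ideal} (Lemma \ref{lem: I + delta(I) is an ideal}) to conclude it is nilpotent and hence contained in $N$. Since this is established in the paper before Theorem \ref{C} and you are morally reproducing it, I count this as a presentational defect rather than a gap.

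Part (b), however, has a genuine gap exactly where you flag it, and the gap is not a technicality: your ``key sublemma'' (existence of $M_0$ with $\delta^{p^{M_0}}(N)\subseteq N$) is essentially equivalent to the theorem you are trying to prove. Indeed, the paper derives precisely this statement (``$N(R)$ is a $(\sigma^{p^M},\delta^{p^M})$-ideal for some $M$'') as a \emph{consequence} of Theorem \ref{C}(b), so using it as an input inverts the logical order, and your proposed mechanism for proving it --- apply $\delta^{p^M}$ to generators of $N$ and hope that high $p$-powers ``wash out'' the escape from $N$ --- has no force in general: there is no a priori smallness of $\delta^{p^M}$ (the toy example works only because $\delta^p=0$ there), and the Leibniz identity $\delta^{p^M}(ab)=\delta^{p^M}(a)b+\sigma^{p^M}(a)\delta^{p^M}(b)$ gives no way to convert nilpotency of generators into membership of $\delta^{p^M}$ of them in $N$. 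The paper avoids ever proving such an estimate. Instead it works with the largest $(\sigma^{p^m},\delta^{p^m})$-stable ideal inside the $\sigma$-prime ideal (the $\delta^{p^m}$-core), notes that these cores stabilise by Noetherianity (Lemma \ref{lem: raise delta to pth power to preserve a sigma-ideal}), and proves the key structural fact (Proposition \ref{propn: delta-cores are sigma-prime}) that once stabilised, the core of a $\sigma$-prime ideal is again $\sigma$-prime; this rests on the characteristic-$p$ combinatorics of Lemma \ref{lem: calculation of delta^n with no common components} and Corollary \ref{cor: calculating delta^(r+s) for minimal s} (picking out a single surviving term of $\delta^{r+s}(axb)$ modulo $I$ with a nonzero multinomial coefficient, using $p$-adic digit decompositions with no common component) together with the element-wise criterion for $\sigma$-primeness. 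Minimality of $J$ then forces $\delta^{p^M}\text{-core}(J)=J$, i.e.\ condition (iii), with no statement about $N$ needed at all. Granting your sublemma, the rest of your part (b) (passing to $Q(R/N)$, the $\sigma^{p^M}$-fixed central idempotent $\bar f$, $\delta^{p^M}(\bar f)=0$, and contracting back) does go through; but as it stands the foundation is missing, and filling it honestly would require reproducing something like the paper's core/primeness argument rather than a direct attack on the nilradical.
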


This is enough to imply that $N(R)$ is a $(\sigma,\delta)$-ideal in case (a), and $N(R)$ is a $(\sigma^{p^M},\delta^{p^M})$-ideal for some $M$ in case (b).

\endgroup

\subsection*{Applications}

There are many examples of classes of rings whose prime quotients satisfy the hypotheses of our Theorem \ref{A}, and which are independently of great interest. We hope that this will ensure the broad applicability of our results. The examples mentioned below are discussed in detail in \S \ref{subsec: applications}:

\begin{enumerate}
\item many iterated local skew power series rings,
\item completed group algebras (Iwasawa algebras) of uniform pro-$p$ groups.
\end{enumerate}

\subsection*{Structure of the paper}

After introducing some preliminary definitions and results in \S\ref{section: prelim}, we will show in \S\ref{section: thm A} that, in the case where $Q(R)$ carries a suitable filtration $u$ satisfying certain compatibility relations with $(\sigma,\delta)$, and such that $\gr_u(Q(R))$ is itself a prime ring, we can deduce that the skew power series ring $R^b[[x;\sigma,\delta]]$ is prime. This demonstrates how we will deduce Theorem \ref{A} from Theorem \ref{B}. In \S\ref{section: thm C}, we will take an interlude and describe invariant ideals under skew derivations, in both characteristic 0 and positive characteristic, which require very different treatment. We will conclude \S 3 by proving Theorem \ref{C}. Finally, in \S\ref{section: thm B} we will prove Theorem \ref{B}, and we will conclude \S\ref{section: thm B}.4 with a proof of Theorem \ref{A}. Detailed calculations of our example applications follow in \S \ref{subsec: applications}.

In a subsequent work, we hope to build on these ideas further, with a view to proving that $R^b[[x;\sigma,\delta]]$ is prime in more general cases, such as the cases in which $\sigma$ and $\delta$ commute or $q$-commute. We do hope that the techniques employed in this paper can be developed, and we have observed that with a small refinement of them it should be possible to prove that $R^b[[x;\sigma,\sigma-\mathrm{id}]]$ is prime in characteristic $0$, but in general we believe that new methods are needed.

\noindent\textbf{Acknowledgements:} The authors are grateful to Konstantin Ardakov for his very thorough and helpful comments on an early draft of this paper. They would also like to thank the Heilbronn Institute for Mathematical Research for supporting and partially funding this work.

\section{Preliminaries}\label{section: prelim}

\subsection{Skew derivations and ideals}

Let $R$ be a Noetherian ring. A \emph{skew derivation} on $R$ is a pair $(\sigma,\delta)$, where $\sigma$ is an automorphism of $R$ and $\delta$ is a left $\sigma$-derivation of $R$, by which we mean that $\delta(ab) = \delta(a)b + \sigma(a)\delta(b)$ for all $a, b\in R$.

It is not currently possible to deal with all skew derivations in full generality, so we focus on the important special case where $\sigma$ and $\delta$ commute. The following formula can then be obtained by induction on $n$.

\begin{lem}\label{lem: binomial delta^n}
Suppose that $\sigma\delta = \delta\sigma$. Given $a,b\in R$, $n\in\mathbb{N}$, we have
\begin{equation*}
\delta^n(ab) = \sum_{k=0}^n \binom{n}{k} \delta^k \sigma^{n-k}(a) \delta^{n-k}(b).
\end{equation*}\qed
\end{lem}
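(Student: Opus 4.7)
The plan is a straightforward induction on $n$, modelled on the classical Leibniz rule.

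For the base case $n=0$, both sides reduce to $ab$, and the case $n=1$ is exactly the defining Leibniz identity $\delta(ab) = \delta(a)b + \sigma(a)\delta(b)$, which matches the terms $k=1$ and $k=0$ of the right-hand side respectively.

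For the inductive step, assume the formula holds for $n$. I would apply $\delta$ to both sides of the inductive hypothesis, using the Leibniz rule on each summand $\delta^k\sigma^{n-k}(a) \cdot \delta^{n-k}(b)$, to obtain two sums:
\begin{align*}
\delta^{n+1}(ab) &= \sum_{k=0}^{n}\binom{n}{k}\,\delta^{k+1}\sigma^{n-k}(a)\,\delta^{n-k}(b) \\
&\qquad + \sum_{k=0}^{n}\binom{n}{k}\,\sigma\bigl(\delta^{k}\sigma^{n-k}(a)\bigr)\,\delta^{n-k+1}(b).
\end{align*}
Here I use the hypothesis $\sigma\delta = \delta\sigma$ to rewrite $\sigma(\delta^k\sigma^{n-k}(a)) = \delta^k \sigma^{n+1-k}(a)$ in the second sum. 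A reindexing (shift $k\mapsto k-1$ in the first sum) then aligns both sums into the same form $\delta^{k}\sigma^{n+1-k}(a)\,\delta^{n+1-k}(b)$, whereupon Pascal's identity $\binom{n}{k-1}+\binom{n}{k} = \binom{n+1}{k}$ yields the claim for $n+1$.

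There is no real obstacle: the argument is purely combinatorial, and the commutation $\sigma\delta = \delta\sigma$ is exactly what is needed to prevent the $\sigma$-factors from producing extra cross-terms under reindexing. The only care required is in bookkeeping of indices at the endpoints of the two sums, and in verifying that the binomial coefficients combine correctly via Pascal's rule.
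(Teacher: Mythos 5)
Your induction is correct and is exactly the argument the paper has in mind (the paper simply states that the formula "can be obtained by induction on $n$" and omits the details). The bookkeeping in your inductive step — applying the Leibniz rule, commuting $\sigma$ past $\delta^k$ via $\sigma\delta=\delta\sigma$, reindexing, and invoking Pascal's identity — is precisely right.
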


(As will be standard throughout this paper, for ease of notation, we will often write function composition as concatenation: $\delta^k \sigma^{n-k} := \delta^k \circ \sigma^{n-k}$.)

\begin{rk}\label{rk: can raise skew derivations to pth powers in char p}
$ $

\begin{enumerate}[label=(\roman*)]
\item If $R$ has characteristic $p$, and $(\sigma,\delta)$ is a skew derivation on $R$ satisfying $\sigma\delta = \delta\sigma$, then for any $m$, we have that $(\sigma^{p^m}, \delta^{p^m})$ is a skew derivation on $R$. This follows immediately from Lemma \ref{lem: binomial delta^n}, noting that the binomial coefficients $\binom{p}{i} \equiv 0$ (mod $p$) for $1\leq i\leq p-1$.
\item Regardless of the characteristic of $R$, if we are in the special case $\delta = \sigma - \mathrm{id}$, then for any $n$, we have that $(\sigma^n, \sigma^n - \mathrm{id})$ is a skew derivation on $R$.
\end{enumerate}
\end{rk}

We remind the reader of the following standard definitions.

\begin{defn}
$ $

\begin{enumerate}[label=(\roman*)]
\item An ideal $I$ of $R$ is called a \emph{$\sigma$-ideal} if $\sigma(I)\subseteq I$. We may also say that such an ideal is \emph{preserved by} $\sigma$. We define similarly a \emph{$\delta$-ideal} (i.e. $\delta(I) \subseteq I$) and a \emph{$(\sigma,\delta)$-ideal} (i.e. both $\sigma(I)\subseteq I$ and $\delta(I)\subseteq I$).
\item A $\sigma$-ideal $I$ of $R$ is \emph{$\sigma$-prime} if, whenever any two $\sigma$-ideals $A$ and $B$ satisfy $AB\subseteq I$, we must have either $A\subseteq I$ or $B\subseteq I$. (We similarly define \emph{$\delta$-prime} and \emph{$(\sigma,\delta)$-prime}.)
\end{enumerate}
\end{defn}

\begin{rks}\label{rks: sigma-prime alternative characterisations}
$ $

\begin{enumerate}[label=(\roman*)]
\item As $R$ is Noetherian, an ideal $I$ is $\sigma$-prime if and only if $I = \bigcap_{n\in\mathbb{Z}} \sigma^n(P)$ for some prime ideal $P$, and in fact this $\sigma$-orbit is finite \cite[Remarks 4* and 5*]{goldie-michler}.
\item The claim that a proper $\sigma$-ideal $I$ is $\sigma$-prime is equivalent to the following condition: let $a,b\in R$, and suppose that $\sigma^n(a)Rb\subseteq I$ for all $n\in\mathbb{Z}$; then we must have either $a\in I$ or $b\in I$. This is proved in \cite[Lemma 2.1(a)]{goodearl-skew-poly-and-quantized}.
\end{enumerate}
\end{rks}

\subsection{Filtrations, associated graded rings, and skew derivations}

If $R$ is an arbitrary (even Noetherian) ring admitting a skew derivation $(\sigma,\delta)$, it is \emph{not} generally the case that we can define the skew power series ring $R[[x; \sigma,\delta]]$ without some extra hypotheses, due to convergence issues. We will usually be interested in the case when $R$ is a \emph{complete filtered} ring. This is similar to the ``$\mathfrak{i}$-adic" case studied in \cite{letzter-noeth-skew}, but in some sense very different from the ``locally nilpotent" case studied in \cite{bergen-grzeszczuk-radicals}.

The following definitions are all well known, but will set up our notational conventions for the paper. Our standard reference for filtered and graded rings is \cite{LVO}.

\begin{defn}
A \emph{(ring) filtration} on $R$ is a function $u: R\to \Gamma \cup \{\infty\}$, where $\Gamma$ is an ordered group, satisfying the following properties:
\begin{enumerate}[label=(\alph*),noitemsep]
\item $u(0) = \infty$,
\item $u(x+y) \geq \min\{u(x), u(y)\}$ for all $x,y\in R$,
\item $u(xy) \geq u(x) + u(y)$ for all $x,y\in R$.
\end{enumerate}

In the usual way, we can associate to $u$ a collection $\{F_\lambda R\}_{\lambda\in \Gamma}$ of additive subgroups of $R$ by defining $F_\lambda R = u^{-1}([\lambda,\infty])$. It will sometimes be useful to use the term \emph{filtration} to refer to this collection $\{F_\lambda R\}_{\lambda\in \Gamma}$, rather than the function $u$, and we will call $F_\lambda R$ the $\lambda$th \emph{level set} of the filtration $u$. Conversely, given the level sets $\{F_\lambda R\}$ of a filtration $u$, we can recover $u$ by setting $u(x) = \min\{\lambda : x\in F_\lambda R\}$ for $x\neq 0$.
\end{defn}

We also adopt the following conventions throughout this paper:

\begin{enumerate}[label=(\alph*),noitemsep]
\item Filtrations are always \emph{separated}: $u(x) = \infty \Leftrightarrow x = 0$.
\item $\Gamma$ will always be a discrete ordered group of rank 1, and as such we will often identify it with $\mathbb{Z}$ (or $e\mathbb{Z}$ for some $e > 0$ where necessary).
\item Our filtrations are written as \emph{descending} filtrations, i.e. if $m,n\in \Gamma$ with $m\leq n$, then $F_m R\supseteq F_n R$. (Notice that \cite{LVO} uses the opposite convention.)
\end{enumerate}

\begin{defn}
Suppose $u: R\to \Gamma \cup \{\infty\}$ is a filtration on $R$. Write $F_\lambda R = u^{-1}([\lambda,\infty])$ and $F_{\lambda^+} R = u^{-1}((\lambda,\infty])$. Then the \emph{associated graded ring} (denoted $\gr_u(R)$, or sometimes just $\gr(R)$ where the filtration $u$ is clear) is defined as
$$\gr_u(R) = \bigoplus_{\lambda\in\Gamma} (F_\lambda R / F_{\lambda^+} R).$$
\end{defn}

\begin{notn}
For any $x\in R\setminus\{0\}$, we will write $\gr_u(x)$ (or just $\gr(x)$) to denote the element $x + F_{u(x)^+} R \in \gr_u(R)$. This is sometimes called the \emph{principal symbol} of $x$ in $\gr_u(R)$.

If $(R,u)$ is a filtered ring, and $d$ is a filtered endomorphism of $R$, we define the \emph{degree} of $d$ to be the element $\deg_u(d) := \inf \{u(d(x)) - u(x) : x\in R\}\in\Gamma$ (or $\deg_u(d) = -\infty$ if this infimum does not exist, but this will not happen in this paper).
\end{notn}

The following definition will underpin most of the work of this paper.

\begin{defn}\label{defn: compatible}
Suppose $u: R\to \Gamma \cup \{\infty\}$ is a filtration on $R$, and $(\sigma,\delta)$ is a skew derivation on $R$. We say that $(\sigma,\delta)$ is \emph{compatible} with $u$ if $\deg_u(\sigma-\mathrm{id}) > 0$ and $\deg_u(\delta) > 0$. (Note that this implies that $\sigma$ and $\delta$ are filtered maps, and hence continuous with respect to the filtration topology on $R$.)

In other words: identify $\Gamma \cong e\mathbb{Z}$ for some $e > 0$. Then $(\sigma,\delta)$ is compatible with $u$ if, for all $\lambda \in \Gamma$ and all $x\in R$ with $u(x) = \lambda$, we have
\begin{itemize}[noitemsep]
\item $u(\sigma(x) - x) \geq \lambda+e$ and
\item $u(\delta(x)) \geq \lambda+e$.
\end{itemize}
\end{defn}

The importance of this definition will become clear in the next subsection. For now, we note simply that this implies $\gr_u(x) = \gr_u(\sigma(x))$ for all $x\in R$.

\textit{Remark.} This (relatively strong) notion of compatibility is useful for our purposes, but we note here that other (weaker) forms of compatibility are very common in the literature. For instance, it is common to insist simply that $\deg_u(\sigma) = 0$, rather than $\deg_u(\sigma - \mathrm{id}) > 0$.

\begin{lem}\label{lem: raising to pth powers in the char 0 case}
Let $(R,u)$ be a filtered $\mathbb{Z}_p$-algebra and $(\sigma,\sigma - \mathrm{id})$ a skew derivation on $R$ compatible with $u$. Suppose $u(p) \geq 1$ and $\deg_u(\sigma - \mathrm{id}) \geq 1$. Then $\deg_u (\sigma^{p^n} - \mathrm{id}) \geq n$. 
\end{lem}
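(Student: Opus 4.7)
The plan is a straightforward induction on $n$, with the binomial theorem as the key tool. Since $\mathrm{id}$ commutes with every additive endomorphism of $R$, we may decompose $\sigma^{p^{n}} = (\sigma^{p^{n}} - \mathrm{id}) + \mathrm{id}$ and raise to the $p$-th power using the ordinary binomial expansion in the ring of additive endomorphisms; this replaces the more delicate Lemma \ref{lem: binomial delta^n} by a purely commutative calculation.

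The base case $n = 0$ is immediate from the hypothesis $\deg_u(\sigma - \mathrm{id}) \geq 1 \geq 0$. For the inductive step, set $\tau = \sigma^{p^{n}}$ so that $\sigma^{p^{n+1}} = \tau^{p}$, and expand
$$\sigma^{p^{n+1}} - \mathrm{id} \;=\; \sum_{k=1}^{p} \binom{p}{k} (\tau - \mathrm{id})^{k}.$$
Each summand admits a routine degree bound. Composition of additive endomorphisms is sub-additive in $\deg_u$, so the inductive hypothesis $\deg_u(\tau - \mathrm{id}) \geq n$ yields $\deg_u\bigl((\tau - \mathrm{id})^{k}\bigr) \geq kn$. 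Left multiplication by a scalar $c \in R$ shifts $\deg_u$ by at least $u(c)$, and for $1 \leq k \leq p-1$ we have $u\bigl(\binom{p}{k}\bigr) \geq u(p) \geq 1$ because $p \mid \binom{p}{k}$. Hence for $1 \leq k \leq p-1$ the $k$-th term has degree at least $1 + kn$, while for $k = p$ (where $\binom{p}{p} = 1$) it has degree at least $pn$. Taking the minimum over $k$ yields the desired lower bound on $\deg_u(\sigma^{p^{n+1}} - \mathrm{id})$.

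I do not anticipate any serious obstacle: the only care needed is in verifying the behaviour of $\deg_u$ under composition and scalar multiplication, which is immediate from the filtration axioms (explicitly, if $T$ is additive with $\deg_u(T) \geq e$ and $c \in R$, then $u(cT(x)) \geq u(c) + u(x) + e$). In fact, pushing the same bookkeeping a little harder yields the slightly stronger bound $\deg_u(\sigma^{p^{n}} - \mathrm{id}) \geq n+1$, which reflects the combined strength of the hypotheses $u(p) \geq 1$ and $\deg_u(\sigma - \mathrm{id}) \geq 1$; but the weaker bound $\geq n$ is all that is needed, and it drops out of the induction without any tightening.
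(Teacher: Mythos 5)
Your route differs from the paper's: the paper expands $\sigma^{p^n}-\mathrm{id}=\sum_{i=1}^{p^n}\binom{p^n}{i}(\sigma-\mathrm{id})^i$ in one shot and computes $v_p\bigl(\binom{p^n}{i}\bigr)=n-v_p(i)\geq n-i$, whereas you induct on $n$ and only ever need the elementary fact $p\mid\binom{p}{k}$ for $1\leq k\leq p-1$. That is a legitimate and arguably cleaner alternative --- but as written your induction has a genuine flaw at its first step. In the step from $n$ to $n+1$ your bound for the $k=p$ term is $\deg_u\bigl((\tau-\mathrm{id})^p\bigr)\geq pn$, which comes only from the inductive hypothesis $\deg_u(\tau-\mathrm{id})\geq n$. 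For $n=0$ this gives $0$, so the minimum over $k$ is $\min\{1,0\}=0$ and you do not obtain $\deg_u(\sigma^{p}-\mathrm{id})\geq 1$; indeed, since your base case deliberately records only $\deg_u(\sigma-\mathrm{id})\geq 0$, the induction as set up never gets off the ground. Your closing remark that ``the weaker bound $\geq n$ is all that is needed, and it drops out of the induction without any tightening'' is precisely where the argument breaks: the tightening is what makes the induction close.

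The repair is immediate, and it is exactly the strengthening you mention and then discard: prove by induction that $\deg_u(\sigma^{p^n}-\mathrm{id})\geq n+1$. The base case is the hypothesis $\deg_u(\sigma-\mathrm{id})\geq 1$, and in the step the terms with $1\leq k\leq p-1$ have degree at least $1+k(n+1)\geq n+2$, while the $k=p$ term has degree at least $p(n+1)\geq 2(n+1)\geq n+2$, so the induction closes and the lemma follows a fortiori. (Alternatively, keep the statement $\geq n$ but treat the step $n=0\to 1$ separately, using $\deg_u(\sigma-\mathrm{id})\geq 1$ to bound the $k=p$ term by $p\geq 1$.) Your auxiliary observations --- subadditivity of $\deg_u$ under composition, and $u\bigl(\binom{p}{k}\bigr)\geq u(p)\geq 1$ for $1\leq k\leq p-1$ --- are fine, and rest on the same implicit convention as the paper's proof, namely that integers (more generally $\mathbb{Z}_p$-scalars) have nonnegative filtration value.
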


\begin{proof}
Rewriting $\sigma$ as $(\sigma - \mathrm{id}) + \mathrm{id}$ and expanding using the binomial theorem:
\begin{align*}
\sigma^{p^n} - \mathrm{id} &= ((\sigma- \mathrm{id}) + \mathrm{id})^{p^n} - \mathrm{id}\\
&= \sum_{i=1}^{p^n} \binom{p^n}{i} (\sigma-\mathrm{id})^i.
\end{align*}
By assumption, $\deg_u(\sigma - \mathrm{id}) \geq 1$, so $\deg_u((\sigma - \mathrm{id})^i) \geq i$. Hence it remains to show that $u\left(\binom{p^n}{i}\right) \geq n - i$. Writing $v_p$ for the $p$-adic valuation on $\mathbb{Z}_p$, we have $v_p(p^n - k) = v_p(k)$ for all $1\leq k\leq p^n - 1$; and, as $$i! \binom{p^n}{i} = p^n \prod_{k=1}^{i-1} (p^n - k),$$ we may take $v_p$ of both sides to see that $v_p(i!) + v_p\left(\binom{p^n}{i}\right) = n + v_p((i-1)!)$. From here we may conclude that $u\left(\binom{p^n}{i}\right) \geq v_p\left(\binom{p^n}{i}\right) = n - v_p(i) \geq n - i$ as required.
\end{proof}

We will also need the following fact:

\begin{lem}\label{lem: compatible skew derivations can be induced up to completion}
Let $A$ be a ring admitting a filtration $u$, and let $(\sigma,\delta)$ be a skew derivation on $A$ compatible with $u$. Let $\widehat{A}$ be the completion of $A$ with respect to $u$, and $\widehat{u}$ the induced filtration. Then there is a unique extension of $(\sigma,\delta)$ to a skew derivation $(\widehat{\sigma}, \widehat{\delta})$ on $\widehat{A}$ which is compatible with $\widehat{u}$.
\end{lem}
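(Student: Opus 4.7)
\emph{Proof plan.} The statement is essentially a continuity-plus-density argument: $\sigma$ and $\delta$ are filtered maps (by compatibility), hence uniformly continuous for the filtration topology; any uniformly continuous map from a topological ring into a complete topological ring extends uniquely to the completion, and the algebraic identities that characterise a skew derivation survive passage to limits. The one non-trivial point is to ensure that the extension $\widehat{\sigma}$ is still bijective.

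First I would record that compatibility forces $\sigma$ and $\delta$ to be filtered maps of $(A,u)$ into itself: given $x \in A$ with $u(x) = \lambda$, we have $u(\delta(x)) \geq \lambda + e > \lambda$ immediately, while $u(\sigma(x)) \geq \min\{u(x), u(\sigma(x) - x)\} = \lambda$ (with equality because the minimum is attained uniquely). The same calculation applied to $\sigma^{-1}(x) - x = -\sigma^{-1}(\sigma(x) - x)$ shows $\deg_u(\sigma^{-1} - \mathrm{id}) \geq e > 0$, so that $\sigma^{-1}$ is also a filtered map compatible with $u$. Hence $\sigma$, $\sigma^{-1}$ and $\delta$ are all uniformly continuous with respect to the $u$-topology, and each sends Cauchy sequences to Cauchy sequences.

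Next I would define $\widehat{\sigma}$ and $\widehat{\delta}$ in the standard way: for $a \in \widehat{A}$, choose a Cauchy sequence $(a_n)$ in $A$ with $a_n \to a$, and set $\widehat{\sigma}(a) := \lim \sigma(a_n)$ and $\widehat{\delta}(a) := \lim \delta(a_n)$. These limits exist since $\widehat{A}$ is complete and the image sequences are Cauchy, and they are independent of the choice of approximating sequence (comparing two choices gives a null sequence, whose image is null by uniform continuity). One likewise extends $\sigma^{-1}$ to a continuous map $\widehat{\sigma^{-1}}$. The relations $\widehat{\sigma}(ab) = \widehat{\sigma}(a)\widehat{\sigma}(b)$, $\widehat{\delta}(ab) = \widehat{\delta}(a)b + \widehat{\sigma}(a)\widehat{\delta}(b)$, and $\widehat{\sigma}\widehat{\sigma^{-1}} = \widehat{\sigma^{-1}}\widehat{\sigma} = \mathrm{id}_{\widehat{A}}$ all hold on the dense subring $A$, and are preserved by taking limits of both sides (multiplication and addition being continuous on $\widehat{A}$). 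Thus $\widehat{\sigma}$ is a ring automorphism of $\widehat{A}$ and $\widehat{\delta}$ is a left $\widehat{\sigma}$-derivation.

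For compatibility with $\widehat{u}$, I would use the fact that, in a separated filtered ring, $\widehat{u}(a) \geq \mu$ iff $a$ is a limit of elements with $u$-valuation $\geq \mu$. So given $a = \lim a_n$ with $\widehat{u}(a) = \lambda$ (one may arrange $u(a_n) = \lambda$ for $n$ large), the inequalities $u(\sigma(a_n) - a_n) \geq \lambda + e$ and $u(\delta(a_n)) \geq \lambda + e$ pass to the limit, giving $\deg_{\widehat{u}}(\widehat{\sigma} - \mathrm{id}) \geq e$ and $\deg_{\widehat{u}}(\widehat{\delta}) \geq e$. Uniqueness of the extension is automatic: any continuous skew derivation on $\widehat{A}$ restricting to $(\sigma,\delta)$ on $A$ must agree with $(\widehat{\sigma},\widehat{\delta})$ on the dense subring $A$, hence everywhere. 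The only step requiring any care is the bijectivity of $\widehat{\sigma}$, and this is handled cleanly by extending $\sigma^{-1}$ separately rather than trying to invert a power series in $\sigma - \mathrm{id}$, which need not converge on general elements.
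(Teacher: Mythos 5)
Your argument is correct, and it is essentially a from-scratch version of what the paper delegates to the literature: the paper's proof simply cites \cite[Chapter I, Theorem 3.4.5]{LVO} for the existence of the filtered extensions and the degree bounds, \cite[Chapter I, Corollary 3.4.8]{LVO} for $\widehat{\sigma}$ being an automorphism, and \cite[Chapter I, Theorem 3.4.7]{LVO} for $\widehat{\delta}$ being a $\widehat{\sigma}$-derivation. What you do differently is supply the underlying continuity-and-density argument directly (legitimate here since $\Gamma$ is identified with $\mathbb{Z}$, so the topology is metrizable and sequential limits suffice), and in particular you handle bijectivity of $\widehat{\sigma}$ by observing that $u(\sigma(x))=u(x)$ forces $\sigma^{-1}$ to preserve $u$, whence $\deg_u(\sigma^{-1}-\mathrm{id})\geq e$ and $\sigma^{-1}$ extends separately to a two-sided inverse --- this is exactly the content of the cited corollary, but your route makes the mechanism visible and avoids any temptation to invert via a series in $\sigma-\mathrm{id}$. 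The trade-off is length versus self-containment: the paper's citation is shorter and plugs into the general Zariskian machinery of \cite{LVO}, while your proof is elementary, makes the compatibility estimates ($\deg_{\widehat{u}}(\widehat{\sigma}-\mathrm{id})\geq e$, $\deg_{\widehat{u}}(\widehat{\delta})\geq e$, via closedness of the level sets $F_\mu\widehat{A}$) explicit, and would survive in settings where one does not want to invoke the full filtered-ring formalism. No gaps: the only points needing care --- well-definedness of the limits, passage of the multiplicative and derivation identities to limits, and the fact that any compatible extension is automatically continuous (giving uniqueness) --- are all addressed.
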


\begin{proof}
Identify $\Gamma \cong \mathbb{Z}$. The existence of maps (of filtered abelian groups) $\widehat{\sigma}$ and $\widehat{\delta}$, and the claims that $\deg_{\widehat{u}}(\widehat{\sigma} - \mathrm{id}) \geq 1$ and $\deg_{\widehat{u}}(\widehat{\delta}) \geq 1$, follow from \cite[Chapter I, Theorem 3.4.5]{LVO}. $\widehat{\sigma}$ is an automorphism of $\widehat{A}$ by \cite[Chapter I, Corollary 3.4.8]{LVO}, and $\widehat{\delta}$ is a $\widehat{\sigma}$-derivation by an application of \cite[Chapter I, Theorem 3.4.7]{LVO}.
\end{proof}

\subsection{Microlocalisation}

Let $R$ be a ring carrying a separated filtration $w:R\to\mathbb{Z}\cup\{\infty\}$, with associated level sets $\{F_nR\}_{n\in\mathbb{Z}}$. Define the \emph{Rees ring} $\tilde{R}$ to be the graded ring given by

\begin{equation*}
\tilde{R}:=\bigoplus_{n\in\mathbb{Z}} F_nR\cdot t^{-n},
\end{equation*}

where $(at^{-n})\cdot (bt^{-m}):=abt^{-(n+m)}$. Note that $t=1\cdot t^{1}\in\tilde{R}$ is central, and despite the notation, $t$ is not actually a unit in $\tilde{R}$.

\noindent Recall from \cite[Definition 2.1.1]{LVO} that $w$ is a \emph{Zariskian filtration} if the Rees ring $\tilde{R}$ is Noetherian, and the Jacobson radical of the ring $F_0R$ contains the ideal $F_1R$. In the case where $R$ is complete with respect to $w$, it follows from \cite[Theorem 2.1.2]{LVO} that $w$ is Zariskian if and only if the associated graded ring $\gr_w R$ is Noetherian, so throughout this paper our filtrations are usually Zariskian.

So, suppose $w$ is a Zariskian filtration. Following the approach in \cite{li-ore-sets}, we will show in this section how we can lift a homogeneous localisation of the associated graded ring $\gr_w R$ to an Ore localisation of $R$, in a process known as \emph{microlocalisation}.

Firstly, we will explore how to realise both the ring $R$ and its associated graded ring as quotients of the Rees ring:

\begin{lem}\label{kernel}
There exists a surjective ring homomorphism $\rho_1:\tilde{R}\to R$ defined by $rt^{-n}\mapsto r$ with kernel $(t-1)\tilde{R}$, and a surjective homomorphism of graded rings $\rho_2:\tilde{R}\to$ $\gr_w(R)$ defined by $rt^{-n}\mapsto r+F_{n+1}R$ with kernel $t\tilde{R}$.
\end{lem}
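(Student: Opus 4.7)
The plan is to verify each assertion in turn. Both $\rho_1$ and $\rho_2$ are specified on the homogeneous pieces $F_n R\cdot t^{-n}$ of the decomposition $\tilde{R}=\bigoplus_{n\in\mathbb{Z}} F_n R\cdot t^{-n}$, and extended additively; since this is a direct sum of abelian groups, the prescription is unambiguous on each summand and hence globally. Multiplicativity is an immediate unpacking of the definitions: for $\rho_1$, the computation $\rho_1\big((rt^{-n})(st^{-m})\big)=\rho_1(rs\cdot t^{-(n+m)})=rs=\rho_1(rt^{-n})\rho_1(st^{-m})$ suffices, and for $\rho_2$ the analogous computation uses the multiplication rule on $\gr_w(R)$.

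For $\rho_1$, surjectivity is clear since every $r\in R$ has $w(r)\in\mathbb{Z}$, hence $r\in F_{w(r)}R$ and $\rho_1(rt^{-w(r)})=r$. The inclusion $(t-1)\tilde{R}\subseteq\ker\rho_1$ follows from $\rho_1(t)=\rho_1(1\cdot t^{-(-1)})=1=\rho_1(1\cdot t^0)$. For the reverse inclusion, the key identity is $(rt^{-n})\cdot(t-1)=rt^{-(n-1)}-rt^{-n}$, valid whenever $r\in F_n R\subseteq F_{n-1}R$, so that $rt^{-n}\equiv rt^{-(n-1)}\pmod{(t-1)\tilde{R}}$. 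Iterating gives $rt^{-n}\equiv rt^{-N}\pmod{(t-1)\tilde{R}}$ for any $N\leq n$ with $r\in F_N R$. Given a finite sum $\sum_n r_n t^{-n}\in\ker\rho_1$, one chooses a common $N$ small enough that $r_n\in F_N R$ for every index appearing, giving $\sum_n r_n t^{-n}\equiv\big(\sum_n r_n\big)t^{-N}\pmod{(t-1)\tilde{R}}$; since $\sum_n r_n=0$ in $R$ by hypothesis, the element lies in $(t-1)\tilde{R}$.

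For $\rho_2$, the map sends the degree-$n$ summand $F_n R\cdot t^{-n}$ into $F_n R/F_{n+1}R=\gr_w(R)_n$, so it preserves the grading. Surjectivity is immediate: any nonzero homogeneous element of $\gr_w(R)_n$ has the form $r+F_{n+1}R$ with $r\in F_n R$, which equals $\rho_2(rt^{-n})$. For the kernel, $\rho_2(t)=1+F_0 R=0$ (using $1\in F_0 R$, which follows from the multiplicative axiom forcing $w(1)=0$), so $t\tilde{R}\subseteq\ker\rho_2$. Conversely, if $\sum_n r_n t^{-n}\in\ker\rho_2$ then each homogeneous component vanishes, forcing $r_n\in F_{n+1}R$ for every $n$; hence $r_n t^{-n}=t\cdot(r_n t^{-(n+1)})$ by centrality of $t$, and the whole sum lies in $t\tilde{R}$.

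The only step needing more than a routine unpacking is the reverse inclusion in the kernel of $\rho_1$, which rests on the reduction-to-a-common-degree trick via the congruence $rt^{-n}\equiv rt^{-(n-1)}\pmod{(t-1)\tilde{R}}$; everything else follows mechanically from the definitions.
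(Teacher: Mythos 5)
Your proof is correct, and its overall shape matches the paper's: well-definedness, multiplicativity and surjectivity are dispatched as routine, and the substance lies in the two kernel computations. The one place you genuinely diverge is the inclusion $\ker\rho_1\subseteq(t-1)\tilde{R}$. The paper folds the \emph{lowest}-index term upward, $r_{-N}t^{N}\equiv r_{-N}t^{N-1}$, which is only legitimate because the vanishing of the total sum forces $r_{-N}=-\sum_{n\geq -N+1}r_n\in F_{-N+1}R$; this membership argument has to be re-run at each step of an induction, terminating in a single term $st^{-M}$ with $s=\rho_1(st^{-M})=0$. You instead push every term \emph{downward} to a common index $N\leq n$ via $rt^{-n}\equiv rt^{-(n-1)}\pmod{(t-1)\tilde{R}}$, which is always permissible because the filtration is descending ($F_nR\subseteq F_{n-1}R$), and then invoke $\sum_n r_n=0$ exactly once at the end. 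This is a cleaner variant: it decouples the congruence manipulation from the hypothesis that the element lies in the kernel, at the cost of working at a lower level set rather than landing on the principal part $st^{-M}$. For $\ker\rho_2$ the two arguments are the same in content; note only that your route via $\rho_2(t)=0$ uses $1\in F_0R$, which the axioms (a)--(c) as listed do not literally force (they give only $w(1)\leq 0$) but which is part of the standard definition of a filtered ring used throughout the paper; if you prefer to avoid the point entirely, observe directly that $t\cdot rt^{-n}=rt^{-(n-1)}\mapsto r+F_nR=0$ for any $r\in F_nR$, which is essentially the paper's computation read in the other direction.
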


\begin{proof}
It is clear that $\rho_1$ and $\rho_2$ are well defined, surjective ring homomorphisms, so it remains to calculate their kernels.

Clearly $(t-1)\tilde{R}$ is contained in $\ker(\rho_1)$, since $\rho_1(t)=1=\rho_1(1)$. On the other hand, if $\displaystyle \rho_1\left(\sum_{n=-N}^{M}{r_nt^{-n}}\right)=0$ for a given set of elements $r_n\in F_nR$ with $r_{-N},r_M\neq 0$, then $\displaystyle \sum_{n=-N}^M{r_n}=0$, and so since $\displaystyle r_{-N}=-\sum_{n=-N+1}^M{r_n}\in F_{-N+1}R$, we see that $r_{-N}t^{N}\equiv r_{-N}t^{N-1}$ (mod $(t-1)\tilde{R})$.

Therefore, $\displaystyle \sum_{n=-N}^{M}{r_nt^{-n}}\equiv (r_{-N}+r_{-N+1})t^{N-1}+\sum_{n=-N+2}^{M}{r_nt^{-n}}$ (mod $(t-1)\tilde{R}$), so by induction on $-N$ we may assume that $\displaystyle \sum_{n=-N}^{M}{r_nt^{-n}}\equiv st^{-M}$ (mod $(t-1)\tilde{R}$) for some $s\in F_MR$. Thus $s=\rho_1(s t^{-M})=0$, and $\displaystyle \sum_{n=-N}^{M}{r_nt^{-n}}\in (t-1)\tilde{R}$ as required.

Since $\rho_2$ is graded, it remains to check that $\rho_2(rt^{-n})=0$ for $r\in F_nR$ if and only if $rt^{-n}\in t\tilde{R}$. But $\rho_2(rt^{-n})=r+F_{n+1}R$ is zero if and only if $r\in F_{n+1}R$, i.e. if and only if $rt^{-n}=(rt^{-(n+1)})t\in t\tilde{R}$ as required.
\end{proof}

Now, let $T$ be a multiplicatively closed, homogeneous, left Ore-localisable subset of $\gr_w(R)$, let $T^{-1} \gr_w(R)$ be the corresponding localisation, and let $\alpha: \gr_w(R)\to T^{-1} \gr_w(R)$ be the natural map, so that every element of $T^{-1} \gr_w(R)$ can be written as $\alpha(X)^{-1}\alpha(A)$ for some $A\in \gr_w(R)$, $X\in T$. 

Let $S$ be the saturated lift $S:=\{r\in R:\gr(r)\in T\}$ of $T$ to $R$, and let $\tilde{S}$ be the saturated lift $\tilde{S}:=\{s\in\tilde{S}:s \text{ homogeneous, } \rho_2(s)\in T\}$ of $T$ to $\tilde{R}$. It follows immediately that $\rho_1(\tilde{S})=S$.

\begin{lem}
$S$ and $\tilde{S}$ are left Ore-localisable subsets of $R$ and $\tilde{R}$ respectively.
\end{lem}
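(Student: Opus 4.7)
The plan is to handle $\tilde{S}$ first and then use the surjection $\rho_1\colon \tilde{R}\to R$ from Lemma~\ref{kernel} to transfer the Ore property to $S$. The key observation is that $\tilde{S}$ consists precisely of those homogeneous elements of $\tilde{R}$ whose images under $\rho_2$ lie in the given Ore set $T\subseteq \gr_w(R)$, and that $\ker(\rho_2) = t\tilde{R}$ with $t$ central. Thus both problems reduce to lifting a known Ore condition in $\gr_w(R)$ across the central element $t$.

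For the first Ore condition on $\tilde{S}$ in $\tilde{R}$, the strategy is an iterative lift modulo successive powers of $t$. Given homogeneous $a\in\tilde{R}$ and $s\in\tilde{S}$, the Ore-ness of $T$ in $\gr_w(R)$ produces homogeneous $\bar{s}_0\in T$ and $\bar{a}_0\in\gr_w(R)$ with $\bar{s}_0\,\rho_2(a) = \bar{a}_0\,\rho_2(s)$. Choosing homogeneous lifts $s_0\in\tilde{S}$ and $a_0\in\tilde{R}$ of the appropriate degree gives $s_0 a - a_0 s \in t\tilde{R}$; writing the difference as $tb_1$ and repeating the procedure on the pair $(b_1,s)$ yields a better approximation. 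Iterating builds a sequence of successive approximations, and the Zariskian hypothesis (equivalent here to $\tilde{R}$ being Noetherian together with $F_1R\subseteq J(F_0R)$) is exactly what converts this iteration into an honest identity $s'a = a's$ with $s'\in\tilde{S}$, via a standard Artin--Rees / Nakayama argument applied to the finitely generated graded left $\tilde{R}$-module $\tilde{R}/\tilde{R}s$ with its $t$-adic filtration. The second Ore condition for $\tilde{S}$ follows by the same technique applied to annihilators.

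Transferring to $R$: for the first Ore condition, given $r\in R$ and $s\in S$, choose any lift $\tilde{r}\in\tilde{R}$ together with a lift $\tilde{s}\in\tilde{S}$ (the latter available because $\rho_1(\tilde{S})=S$), apply the Ore condition already established in $\tilde{R}$, and project the resulting identity down through $\rho_1$. The second Ore condition for $S$ in $R$ requires a little more care, because $\ker\rho_1 = (t-1)\tilde{R}$ is non-trivial so an annihilator relation in $R$ does not immediately lift to one in $\tilde{R}$; this can be handled either by repeating the iterative lift directly on $R$ using the filtration $w$, or by combining the $\tilde{S}$-Ore condition with the centrality of $t-1$ in $\tilde{R}$. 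The main difficulty throughout is forcing the approximate-lifting iteration to terminate in an exact identity in the absence of a genuine $t$-adic completeness hypothesis on $\tilde{R}$ itself; this is precisely the role played by the Zariskian condition and by the Artin--Rees machinery it supplies.
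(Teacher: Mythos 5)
Your proposal is correct in substance, but it takes a different route from the paper only in the sense that the paper offers no argument at all: the authors simply cite \cite[Lemma 2.1]{li-ore-sets}, which is precisely the statement that a homogeneous left Ore set of $\gr_w(R)$ lifts to the Rees ring and to $R$ when the filtration is Zariskian. What you have done is reconstruct a self-contained proof of that cited lemma, and the mechanism you describe is the right one: iterate the graded Ore condition to get $\sigma_n\in\tilde{S}$, $\alpha_n\in\tilde{R}$ with $\sigma_n a-\alpha_n s\in t^n\tilde{R}$, then use Noetherianity of $\tilde{R}$ (this is where Zariskian enters) to turn the approximations into an exact relation, and finally push the Ore condition down through $\rho_1$ using $\rho_1(\tilde{S})=S$. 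Two places need tightening. First, the finishing step must be Artin--Rees for the centrally generated ideal $t\tilde{R}$ applied to the pair $(M,\tilde{R}\bar{a})$ where $M=\tilde{R}/\tilde{R}s$ and $\bar{a}$ is the image of $a$: for $n\gg 0$ one has $\sigma_n\bar{a}\in t^nM\cap\tilde{R}\bar{a}\subseteq t^{n-c}\tilde{R}\bar{a}$, so $(\sigma_n-t^{n-c}x)\bar{a}=0$ for some $x$, and after replacing $x$ by its homogeneous component of the correct degree the element $\sigma_n-t^{n-c}x$ is homogeneous and congruent to $\sigma_n$ modulo $t\tilde{R}=\ker\rho_2$, hence lies in $\tilde{S}$ when $n-c\geq 1$. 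The Krull-intersection/Nakayama statement $\bigcap_n t^nM=0$ alone does not suffice, because the elements $\sigma_n\bar{a}$ vary with $n$, so no single element of the intersection is exhibited; your phrase ``Artin--Rees / Nakayama applied to $\tilde{R}/\tilde{R}s$'' is compatible with the correct argument but should name the submodule $\tilde{R}\bar{a}$ explicitly. (You should also record the standard reduction of general $a$ to homogeneous $a$ by composing the $s'$ obtained for each homogeneous component against the same $s$.) Second, neither of your suggested workarounds for the reversibility (``second Ore'') condition in $R$ is needed: $\tilde{R}$ is Noetherian by the Zariskian hypothesis, hence so is $R\cong\tilde{R}/(t-1)\tilde{R}$ by Lemma \ref{kernel}, and in a left Noetherian ring any multiplicative set satisfying the left Ore condition is automatically left reversible (via the stabilising chain of left annihilators of powers of $s$); this settles the annihilator condition in both rings at once and avoids the genuine difficulty you flag about lifting an annihilator relation across $\ker\rho_1=(t-1)\tilde{R}$. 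With these adjustments your argument is complete; what the citation buys the authors is brevity, while your route makes visible exactly which ingredients of the Zariskian hypothesis are used, namely Noetherianity of $\tilde{R}$ and the Artin--Rees property of the central ideal $t\tilde{R}$.
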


\begin{proof}
This follows from \cite[Lemma 2.1]{li-ore-sets}.
\end{proof}

Therefore, we can define the localisations $\tilde{S}^{-1}\tilde{R}$ and $S^{-1}R$, and let $\tau:R\to S^{-1}R$ and $\tilde{\tau}:\tilde{R}\to\tilde{S}^{-1}\tilde{R}$ be the natural maps, so that every element of $S^{-1}R$ has the form $\tau(s)^{-1}\tau(r)$ for some $r\in R$, $s\in S$, and similarly for $\tilde{S}^{-1}\tilde{R}$. Also, $\tilde{S}^{-1}\tilde{R}$ is a graded ring, where 

\begin{center}
$(\tilde{S}^{-1}\tilde{R})_n:=\{\tilde{\tau}(s)^{-1}\tilde{\tau}(r):r\in\tilde{R},s\in\tilde{S}, r$ homogeneous, $\deg(r)-\deg(s)=n\}$.
\end{center}

Here, $\deg$ means the degree within the graded ring $\tilde{R}$, e.g. $\deg(t)=-1$.

From now on, \textbf{we will assume that $\tau$ is an injection}, i.e. that $sr\neq 0$ for all $s\in S$, $r\in R\setminus \{0\}$. This is always true in the case where $R$ is a prime ring.

In this case, $S^{-1}R$ is a ring extension of $R$, so we may identify $R$ with its image under $\tau$ and simply write $\tau(r)$ as $r$. Moreover, since $\tilde{S}$ is homogeneous, it follows that if $\tilde{s}\tilde{r}=0$ for some $\tilde{s}\in\tilde{S}$, $\tilde{r}\in\tilde{R}$, then $\tilde{s}=st^{-n}$ for some $s\in S$. But it follows from the proof of \cite[Lemma 3.3]{ardakovInv} that $S$ consists of regular elements: therefore, $sr\neq 0$ whenever $r\neq 0$, and so we see that $\tilde{r}=0$. Hence $\tilde{\tau}$ is also an injection, and we may consider $\tilde{S}^{-1}\tilde{R}$ a ring extension of $\tilde{R}$.

However, we cannot assume that the natural graded map $\alpha: \gr_w(R)\to T^{-1}\gr_w(R)$ is an injection: in fact in general it will not be, even when $R$ is prime.

\begin{propn}\label{propn: Rees projection maps extend to the localised Rees ring}
The maps $\rho_1,\rho_2$ extend to ring homomorphisms on $\tilde{S}^{-1}\tilde{R}$, namely $\rho_1:\tilde{S}^{-1}\tilde{R}\to S^{-1}R$ and $\rho_2:\tilde{S}^{-1}\tilde{R}\to T^{-1}\gr_w(R)$, with kernels $(t-1)\tilde{S}^{-1}\tilde{R}$ and $t\tilde{S}^{-1}\tilde{R}$ respectively, and $\rho_2$ is graded.
\end{propn}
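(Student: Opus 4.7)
The plan is to invoke the universal property of Ore localisation to extend both maps, and then use the kernel computations from Lemma \ref{kernel}, together with the centrality of $t$ in $\tilde{R}$, to describe the kernels of the extensions.

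First, I would verify the extension step. Any $\tilde{s}\in\tilde{S}$ is, by definition, homogeneous, so of the form $\tilde{s}=st^{-n}$ with $s\in F_nR$ and $\rho_2(\tilde{s})=s+F_{n+1}R\in T$; in particular $s\in S$. Hence $\rho_1(\tilde{s})=s$ is invertible in $S^{-1}R$, and $\rho_2(\tilde{s})\in T$ is invertible in $T^{-1}\gr_w(R)$. The universal property of left Ore localisation then produces ring homomorphisms $\rho_1\colon\tilde{S}^{-1}\tilde{R}\to S^{-1}R$ and $\rho_2\colon\tilde{S}^{-1}\tilde{R}\to T^{-1}\gr_w(R)$ agreeing with the original maps on $\tilde{R}$. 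Surjectivity is immediate, since every element of $S^{-1}R$ has the form $s^{-1}r$ with $s\in S$, $r\in R$, and both lift to elements of $\tilde{R}$ via $\rho_1$ (and similarly for $\rho_2$, using that $T$ is homogeneous).

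Next, I would compute the kernels. The inclusions $(t-1)\tilde{S}^{-1}\tilde{R}\subseteq\ker(\rho_1)$ and $t\tilde{S}^{-1}\tilde{R}\subseteq\ker(\rho_2)$ follow immediately from $\rho_1(t)=1$ and $\rho_2(t)=0$. For the reverse inclusion for $\rho_1$: suppose $\tilde{\tau}(\tilde{s})^{-1}\tilde{\tau}(\tilde{r})\in\ker(\rho_1)$, so $\rho_1(\tilde{r})=0$ in $S^{-1}R$. Since $R\hookrightarrow S^{-1}R$ by the standing injectivity hypothesis on $\tau$, we get $\rho_1(\tilde{r})=0$ in $R$, and Lemma \ref{kernel} then gives $\tilde{r}\in(t-1)\tilde{R}$. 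Because $t$ (and hence $t-1$) is central in $\tilde{R}$ and remains central in $\tilde{S}^{-1}\tilde{R}$, we conclude that $\tilde{\tau}(\tilde{s})^{-1}\tilde{\tau}(\tilde{r})\in(t-1)\tilde{S}^{-1}\tilde{R}$.

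For $\rho_2$ the argument is slightly more delicate, because $\alpha\colon \gr_w(R)\to T^{-1}\gr_w(R)$ need not be injective. If $\tilde{\tau}(\tilde{s})^{-1}\tilde{\tau}(\tilde{r})\in\ker(\rho_2)$, then $\alpha(\rho_2(\tilde{r}))=0$, so by the definition of the Ore localisation there exists $X\in T$ with $X\rho_2(\tilde{r})=0$ in $\gr_w(R)$. Since $T$ is homogeneous, we can lift $X$ to a homogeneous element $\tilde{s}_1\in\tilde{S}$ with $\rho_2(\tilde{s}_1)=X$; then $\rho_2(\tilde{s}_1\tilde{r})=X\rho_2(\tilde{r})=0$, and Lemma \ref{kernel} gives $\tilde{s}_1\tilde{r}\in t\tilde{R}$. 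Centrality of $t$ then yields $\tilde{\tau}(\tilde{r})=\tilde{\tau}(\tilde{s}_1)^{-1}\tilde{\tau}(\tilde{s}_1\tilde{r})\in t\tilde{S}^{-1}\tilde{R}$, whence $\tilde{\tau}(\tilde{s})^{-1}\tilde{\tau}(\tilde{r})\in t\tilde{S}^{-1}\tilde{R}$. Finally, gradedness of $\rho_2$ is essentially automatic: $\rho_2\colon\tilde{R}\to\gr_w(R)$ is graded and $\alpha$ is graded (since $T$ is homogeneous), and the grading on $\tilde{S}^{-1}\tilde{R}$ recalled in the text is defined precisely so that the extension remains degree-preserving.

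The main obstacle is the kernel computation for $\rho_2$, where the possible non-injectivity of $\alpha$ forces one to clear denominators by hand. The key trick is that $T$'s homogeneity allows one to lift the Ore witness $X$ back to $\tilde{S}$, so that the calculation can be carried out inside $\tilde{R}$ before passing to $\tilde{S}^{-1}\tilde{R}$.
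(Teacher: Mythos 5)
Your proposal is correct and follows essentially the same route as the paper: extend $\rho_1,\rho_2$ to the localisation (you invoke the universal property of Ore localisation where the paper verifies well-definedness of the explicit formula $\tilde{s}^{-1}\tilde{r}\mapsto\rho_i(\tilde{s})^{-1}\rho_i(\tilde{r})$ by hand), then identify the kernels via Lemma \ref{kernel} and the centrality of $t$, and deduce gradedness of $\rho_2$ from gradedness of $\alpha$ and the fact that elements of $T$ survive in the localisation. In fact your denominator-clearing step for $\ker(\rho_2)$ --- lifting the Ore witness $X\in T$ killing $\rho_2(\tilde{r})$ to a homogeneous $\tilde{s}_1\in\tilde{S}$ and working inside $\tilde{R}$ before localising --- is more explicit than the paper, which merely asserts that ``the same argument'' as for $\rho_1$ applies even though $\alpha$ need not be injective; your version supplies that missing detail.
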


\begin{proof}
Define $\rho_1:\tilde{S}^{-1}\tilde{R}\to S^{-1}R$ by $\rho_1(\tilde{s}^{-1}\tilde{r})=\rho_1(\tilde{s})^{-1}\rho_1(\tilde{r})$. We show first that this is well defined. Note that $\rho_1(\tilde{S})=S$, so it makes sense to take the inverse of $\rho_1(\tilde{s})$; and if $\tilde{s}^{-1}\tilde{r}=\tilde{s}'^{-1}\tilde{r}'$, then we may find $\tilde{u}\in\tilde{R}$, $\tilde{v}\in\tilde{S}$ such that $\tilde{u}\tilde{r}=\tilde{v}\tilde{r}'$ and $\tilde{v}^{-1}\tilde{u} = \tilde{s}'\tilde{s}^{-1}$. This gives $\rho_1(\tilde{u})\rho_1(\tilde{r})=\rho_1(\tilde{v})\rho_1(\tilde{r}')$, which we may rearrange to see that $\rho_1(\tilde{s})^{-1}\rho_1(\tilde{r})=\rho_1(\tilde{s}')^{-1}\rho_1(\tilde{r}')$. A similar proof shows that $\rho_2:\tilde{S}^{-1}\tilde{R}\to T^{-1} \gr_w(R)$, $\tilde{s}^{-1}\tilde{r}\mapsto\alpha(\rho_2(\tilde{s}))^{-1}\alpha(\rho_2(\tilde{r}))$ is well defined.

It is readily checked that $\rho_1,\rho_2$ are ring homomorphisms, and surjectivity is obvious.

Again, it is clear that $(t-1)\tilde{S}^{-1}\tilde{R}$ is contained in the kernel of $\rho_1$, and if $\rho_1(\tilde{s})^{-1}\rho_1(\tilde{r})=0$ then $\rho_1(\tilde{r})=0$, so $\tilde{r}\in (t-1)\tilde{S}^{-1}\tilde{R}$ by Lemma \ref{kernel}, and hence $\tilde{s}^{-1}\tilde{r}\in (t-1)\tilde{S}^{-1}\tilde{R}$. The same argument shows that the kernel of $\rho_2$ is $t\tilde{S}^{-1}\tilde{R}$, so it remains to prove that $\rho_2$ is graded. 

Given $\tilde{s}^{-1}\tilde{r}$ with $\tilde{r}$ homogeneous and $\deg(\tilde{r})-\deg(\tilde{s})=n$, we know that $\rho_2(\tilde{r})=0$ or $\deg(\rho_2(\tilde{r}))-\deg(\rho_2(\tilde{s}))=n$ in $\gr_w(R)$. But the natural localisation map $\alpha: \gr_w(R)\to T^{-1} \gr_w(R)$ is graded, so if $X\in \gr_w(R)$ is homogeneous of degree $d$ then either $X\in\ker(\alpha)$ and $\alpha(X)=0$, or else $\alpha(X)$ has degree $d$ in $T^{-1} \gr_w(R)$. 

In particular, if $\rho_2(\tilde{r})$ lies in the kernel of $\alpha$ then $\rho_2(\tilde{s}^{-1}\tilde{r})=0$, otherwise $\deg(\alpha(\rho_2(\tilde{r})))=\deg(\rho_2(\tilde{r}))$. Furthermore, since $\rho_2(\tilde{s})\in T$, it does not lie in the kernel of $\alpha$, so  $\deg(\alpha(\rho_2(\tilde{s})))=\deg(\rho_2(\tilde{s}))$, and hence we can deduce that $\deg(\alpha(\rho_2(\tilde{r})))-\deg(\alpha(\rho_2(\tilde{s})))=n$ in $T^{-1} \gr_w(R)$, hence this extension of $\rho_2$ is graded.\end{proof}

Using this proposition, we define a new filtration $w'$ on $S^{-1}R$ whose $n$th level set is given by $F_n'S^{-1}R:=\rho_1((\tilde{S}^{-1}\tilde{R})_n)$.

\begin{propn}\label{propn: Rees and associated graded of microlocalisation}
The map $w':S^{-1}R\to\mathbb{Z}\cup\{\infty\}$ is a filtration with Rees ring $\tilde{S}^{-1}\tilde{R}$ and associated graded ring $\gr_{w'}(S^{-1}R) = T^{-1}\gr_w(R)$. Moreover, $w'$ satisfies:
\begin{itemize}
\item For all $x\in S^{-1}R$, $w'(x)=\min\{w(r)-w(s):r\in R,s\in S,x=s^{-1}r\}$.
\item For all $r\in R$, $w'(r)\geq w(r)$, with equality if $r\in S$.
\item For all $r\in R$, $s\in S$, $w'(s^{-1}r)=w'(r)-w(s)$.
\end{itemize}
\end{propn}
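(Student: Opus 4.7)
The plan is to transfer structure from the localised Rees ring $\tilde{S}^{-1}\tilde{R}$ down to $S^{-1}R$ via the surjections $\rho_1$ and $\rho_2$ from Proposition \ref{propn: Rees projection maps extend to the localised Rees ring}, whose kernels are $(t-1)\tilde{S}^{-1}\tilde{R}$ and $t\tilde{S}^{-1}\tilde{R}$ respectively. Most of the filtration axioms for the level sets $F_n'S^{-1}R := \rho_1((\tilde{S}^{-1}\tilde{R})_n)$ follow directly: additivity is clear as $\rho_1$ is a group homomorphism; multiplicativity $F_m' F_n' \subseteq F_{m+n}'$ follows from the graded ring structure on $\tilde{S}^{-1}\tilde{R}$; and the descending chain property $F_n' \subseteq F_{n-1}'$ comes from multiplying by the central degree-$(-1)$ element $t$, since if $y \in (\tilde{S}^{-1}\tilde{R})_n$ then $yt \in (\tilde{S}^{-1}\tilde{R})_{n-1}$ with the same image $\rho_1(yt) = \rho_1(y)$.

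For the explicit formula, the key observation is that a homogeneous $\tilde{s} \in \tilde{S}$ is forced to have Rees degree exactly $w(s)$ (any strictly smaller degree would give $\rho_2(\tilde{s}) = 0 \notin T$), while a homogeneous lift $\tilde{r} = rt^{-d_r} \in \tilde{R}$ only requires $d_r \leq w(r)$. Hence for any representation $x = s^{-1}r$ the canonical lift $(st^{-w(s)})^{-1}(rt^{-w(r)})$ sits in $(\tilde{S}^{-1}\tilde{R})_{w(r)-w(s)}$ and maps to $x$ under $\rho_1$; conversely, every element of $(\tilde{S}^{-1}\tilde{R})_n$ has this canonical form, pinning down exactly which representations contribute at each level. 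The comparison $w'(r) \geq w(r)$ for $r \in R$ then follows from the trivial representation $r = 1^{-1}r$, with equality when $r \in S$ because $\gr_w(r) \in T$ becomes a unit in $T^{-1}\gr_w(R)$, so the symbol of $r$ in $\gr_{w'}(S^{-1}R)$ cannot vanish at degree $w(r)$. The final identity $w'(s^{-1}r) = w'(r) - w(s)$ is then a consequence of the formula together with multiplicativity (using $w'(s) = w(s)$).

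Finally, to identify the Rees ring and associated graded of $w'$, I would construct the natural graded map $\tilde{S}^{-1}\tilde{R} \to S^{-1}R[t,t^{-1}]$ extending the tautological inclusion $\tilde{R} \hookrightarrow R[t,t^{-1}]$; this is well-defined by the universal property of Ore localisation, since each $\tilde{s} = st^{-w(s)}$ becomes invertible (with inverse $s^{-1}t^{w(s)}$) in $S^{-1}R[t,t^{-1}]$. The image is precisely $\widetilde{S^{-1}R}$ by the definition of the $F_n'$, and injectivity follows from regularity of $\tilde{S}$. Quotienting by $t$ and applying Proposition \ref{propn: Rees projection maps extend to the localised Rees ring} then yields
\[
\gr_{w'}(S^{-1}R) = \widetilde{S^{-1}R}/t\widetilde{S^{-1}R} \cong \tilde{S}^{-1}\tilde{R}/t\tilde{S}^{-1}\tilde{R} \cong T^{-1}\gr_w(R).
\]
The main obstacle I anticipate is establishing separatedness of $w'$: because a single $x$ admits many different representations $x = s^{-1}r$, one must use the Zariskian hypothesis on $w$ together with regularity of $\tilde{S}$ to rule out a nonzero $x$ lying in every $F_n'$.
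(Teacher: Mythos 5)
Your proposal is correct, and for the filtration axioms, the value formula and the three bullet points it follows essentially the paper's own route: level sets $F_n'=\rho_1((\tilde{S}^{-1}\tilde{R})_n)$, descent by multiplying lifts by $t$, the inequality $w'(r)\geq w(r)$ from the tautological lift $rt^{-w(r)}$, equality for $r\in S$ because the symbol of $r$ becomes a unit (the paper phrases this contrapositively: otherwise some $a\in T$ annihilates $\gr_w(r)$, forcing $0\in T$), and the last identity from $w'(s)=w(s)$ together with the two filtration inequalities. Where you genuinely diverge is the identification of the Rees ring and associated graded ring: the paper writes down an element-wise map $\widetilde{S^{-1}R}\to\tilde{S}^{-1}\tilde{R}$, $(s^{-1}r)\overline{t}^{-n}\mapsto (st^{-a})^{-1}(rt^{-b})$, and simply asserts it is a well-defined isomorphism, whereas you obtain a graded homomorphism $\tilde{S}^{-1}\tilde{R}\to S^{-1}R[t,t^{-1}]$ from the universal property of Ore localisation, identify its image with $\widetilde{S^{-1}R}$, and deduce injectivity from regularity of $S$; this buys well-definedness (the delicate point, since representations $x=s^{-1}r$ are highly non-unique) essentially for free, and the graded statement then drops out by factoring out $t$ and invoking Proposition \ref{propn: Rees projection maps extend to the localised Rees ring}, exactly as you say. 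Three small caveats: your claim that every element of $(\tilde{S}^{-1}\tilde{R})_n$ ``has the canonical form'' $(st^{-w(s)})^{-1}(rt^{-w(r)})$ is literally too strong --- a degree-$n$ element is $(st^{-w(s)})^{-1}(rt^{-b})$ with $b\leq w(r)$ and $b-w(s)=n$, so the canonical rewrite may raise the degree --- but since this still shows that $\rho_1$ of the degree-$n$ piece is $\{s^{-1}r:w(r)-w(s)\geq n\}$, the formula follows (read as a maximum, as in Properties \ref{props: the filtration w'}(i); the ``min'' in the statement is the paper's own slip, and your inequalities align with the correct reading); your unit-symbol argument for $r\in S$ uses the graded identification, so that identification should be established before the bullet points rather than after; and separatedness of $w'$, which you flag as the main outstanding obstacle, is not addressed in the paper's proof either --- it is deferred to \cite{li-ore-sets}, where $w'$ is in fact shown to be Zariskian --- so no gap arises there relative to the paper.
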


\begin{proof}
To prove that $w'$ is a filtration, we need only prove that the level sets $F_n'S^{-1}R$ satisfy $F_{n+1}'S^{-1}R\subseteq F_n'S^{-1}R$ and $F_n'S^{-1}R\cdot F_m'S^{-1}R\subseteq F_{n+m}'S^{-1}R$ for all $n,m\in\mathbb{Z}$.

The second property is clear since $F_n'S^{-1}R=\rho_1((\tilde{S}^{-1}\tilde{R})_n)$, we have that $(\tilde{S}^{-1}\tilde{R})_n\cdot(\tilde{S}^{-1}\tilde{R})_m\subseteq (\tilde{S}^{-1}\tilde{R})_{n+m}$, and $\rho_1$ is a ring homomorphism. For the first, note that if $x\in F_{n+1}'S^{-1}R$ then $x=\rho_1(\tilde{s}^{-1}\tilde{r})$  for some $\tilde{r}\in\tilde{R}$ homogeneous, $\tilde{s}\in\tilde{S}$ with $\deg(\tilde{r})-\deg(\tilde{s})=n+1$, and since $t-1$ lies in the kernel of $\rho_1$, it follows that $x=\rho_1(\tilde{s}^{-1}t\tilde{r})$, and $\deg(t\tilde{r})-\deg(\tilde{s})=\deg(\tilde{r})-\deg(\tilde{s})-1=n$. Thus $x\in F_n'S^{-1}R$ and hence $F_{n+1}'S^{-1}R\subseteq F_n'S^{-1}R$.

Let $\displaystyle \widetilde{S^{-1}R}=\bigoplus_{n\in\mathbb{Z}} (F_n'S^{-1}R)\overline{t}^{-n}$ be the Rees ring, and define a map $\widetilde{S^{-1}R}\to\tilde{S}^{-1}\tilde{R}$ by $$(s^{-1}r)\overline{t}^{-n}\mapsto (st^{-a})^{-1}(rt^{-b})$$ when $w'(s)=a$, $w'(r)=b$ and $b-a=n$. This is a well defined ring isomorphism.

Similarly, we can define a map $\gr_{w'}(S^{-1}R)\to T^{-1}\gr_w(R)$ by
$$s^{-1}r +F_{n+1}'S^{-1}R\mapsto \alpha(s+F_{a+1}R)^{-1}\alpha(r+F_{b+1}R),$$ which is an isomorphism of graded rings.

Now, given $x\in S^{-1}R$, we see that
\begin{align*}
w'(x)&=\min\{n\in\mathbb{Z}:x\in\rho_1((\tilde{S}^{-1}\tilde{R})_n)\}\\
&=\min\{n\in\mathbb{Z}:x=\rho_1((st^{-a})^{-1}(rt^{-b})), s\in S, w(s)\geq a, w(r)\geq b, b-a=n\}\\
&=\min\{n\in\mathbb{Z}:x=\rho_1((st^{-a})^{-1}(rt^{-b})),s\in S,w(s)=a,w(r)=b,b-a\leq n\}\\
&\qquad\qquad\text{(because if $w(r)>b$ then $\rho_1(rt^{-b})=\rho_1(rt^{-(b+1)})$)}\\
\phantom{w'(x)}&=\min\{n\in\mathbb{Z}:x=s^{-1}r,w(r)-w(s)\leq n\}\\
&=\min\{w(r)-w(s):r\in R,s\in S,x=s^{-1}r\}
\end{align*}
as required.

Now, if $r\in R$ and $w(r)=n$ then $r=\rho_1(rt^{-n})\in\rho_1(\tilde{R}_n)$, so $r\in\rho_1((\tilde{S}^{-1}\tilde{R})_n)$ and hence $w'(r)\geq n=w(r)$. Moreover, if $w'(r)>w(r)$ then under the natural map $\alpha$, $r+F_{n+1}R$ is sent to $r+F_{n+1}'S^{-1}R=0$, i.e. $r+F_nR$ lies in the kernel of $\alpha$. So by standard properties of the localisation, there exists $a\in T$ such that $a(r+F_{n+1}R)=0$, i.e. there exists $s\in S$ such that $sr\in F_{n+w(s)+1}R$, and hence $w(rs)>w(r)+w(s)$.

In particular, if $r\in S$ then this means that $b:=r+F_{n+1}R\in T$, and it follows that $ab=0$ with $a,b\in T$, and hence $0\in T$ -- contradiction. Therefore if $r\in S$, $w'(r)=w(r)$.

Finally, if $s\in S$ with $w(s)=n$ then $s+F_{n+1}'S^{-1}R$ is a unit in $\gr_{w'} (S^{-1}R)$, and hence we see that $w'(sr)=w'(s)+w'(r)$ for all $r\in R$. It follows that $w'(s^{-1}r)=w'(r)-w'(s)$ for all $r\in R$ as required. \end{proof}

\textbf{Note:} It is proved in \cite{li-ore-sets} that $w'$ is in fact a Zariskian filtration on $S^{-1}R$.

Now, suppose that $R$ carries a skew derivation $(\sigma,\delta)$ such that $(\sigma,\delta)$ is compatible with the initial filtration $w$. We will prove that $(\sigma,\delta)$ extends to a skew derivation of $S^{-1}R$, and that the extension is compatible with $w'$.

Firstly, it is standard that any automorphism $\sigma$ and any $\sigma$-derivation $\delta$ extend uniquely to a localisation, by defining $\sigma(s^{-1}r)=\sigma(s)^{-1}\sigma(r)$, and $\delta(s^{-1}r)=\sigma(s)^{-1}(\delta(r)-\delta(s)s^{-1}r)$, so it remains to prove that the extension is compatible.

We can also extend $\sigma$ and $\delta$ to the Rees ring $\tilde{R}$ by:

\begin{center}
$\tilde{\sigma}\left(\underset{n\in\mathbb{Z}}{\sum}{r_nt^{-n}}\right)=\underset{n\in\mathbb{Z}}{\sum}{\sigma(r_n)t^{-n}}$, $\tilde{\delta}\left(\underset{n\in\mathbb{Z}}{\sum}{r_nt^{-n}}\right)=\underset{n\in\mathbb{Z}}{\sum}{\delta(r_n)t^{-n}}$
\end{center}

\begin{lem}\label{Rees-skew derivation}
The pair $(\tilde{\sigma},\tilde{\delta})$ is a graded skew derivation of $\tilde{R}$ such that $\sigma(t)=t$ and $\tilde{\delta}(t)=0$. In particular, the extensions of $\tilde{\sigma}$ and $\tilde{\delta}$ to $\tilde{S}^{-1}\tilde{R}$ are graded, and they preserve the ideals generated by $t$ and $t-1$. 
\end{lem}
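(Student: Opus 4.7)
The plan is to verify each claim in turn, essentially by unpacking the definitions and using compatibility of $(\sigma,\delta)$ with $w$.

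First, I would check that the formulas for $\tilde{\sigma}$ and $\tilde{\delta}$ really land inside $\tilde{R}$, i.e.\ that each is a graded map. Compatibility gives $\deg_w(\sigma-\mathrm{id})>0$ and $\deg_w(\delta)>0$, which in particular implies $w(\sigma(r))\geq w(r)$ and $w(\delta(r))\geq w(r)$. Hence if $r\in F_nR$ then both $\sigma(r)$ and $\delta(r)$ lie in $F_nR$, so the formulas $rt^{-n}\mapsto \sigma(r)t^{-n}$ and $rt^{-n}\mapsto \delta(r)t^{-n}$ do define graded maps on $\tilde R$ (they send the degree-$n$ piece to itself). Taking $r=1$, $n=-1$ gives $\tilde{\sigma}(t)=t$ and $\tilde{\delta}(t)=\delta(1)\,t=0$. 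The skew derivation axioms for $(\tilde{\sigma},\tilde{\delta})$ now reduce to the corresponding axioms for $(\sigma,\delta)$ on each homogeneous piece: for homogeneous elements, $\tilde{\sigma}((rt^{-n})(st^{-m}))=\sigma(rs)t^{-(n+m)}=\tilde{\sigma}(rt^{-n})\tilde{\sigma}(st^{-m})$, and likewise $\tilde{\delta}((rt^{-n})(st^{-m}))=\delta(rs)t^{-(n+m)}$ expands via the left $\sigma$-Leibniz rule to $\tilde{\delta}(rt^{-n})\cdot st^{-m}+\tilde{\sigma}(rt^{-n})\cdot \tilde{\delta}(st^{-m})$. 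Bijectivity of $\tilde{\sigma}$ follows by applying the same construction to $\sigma^{-1}$, which is also filtered of degree $\geq 0$ since $\sigma-\mathrm{id}$ is filtered of positive degree forces $w(\sigma(y))=w(y)$ for all $y$, hence $w(\sigma^{-1}(x))=w(x)$.

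For the extensions to $\tilde{S}^{-1}\tilde{R}$, I would first observe that $\tilde{\sigma}$ preserves $\tilde{S}$. Indeed, because $\deg_w(\sigma-\mathrm{id})>0$, we have $\sigma(r)-r\in F_{n+1}R$ whenever $r\in F_nR$, so $\rho_2(\tilde{\sigma}(\tilde{s}))=\rho_2(\tilde{s})$ for every homogeneous $\tilde{s}\in\tilde{R}$; in particular $\rho_2(\tilde{\sigma}(\tilde{s}))\in T$ whenever $\rho_2(\tilde{s})\in T$. The standard formulas
\[
\tilde{\sigma}(\tilde{s}^{-1}\tilde{r})=\tilde{\sigma}(\tilde{s})^{-1}\tilde{\sigma}(\tilde{r}),\qquad \tilde{\delta}(\tilde{s}^{-1}\tilde{r})=\tilde{\sigma}(\tilde{s})^{-1}\bigl(\tilde{\delta}(\tilde{r})-\tilde{\delta}(\tilde{s})\tilde{s}^{-1}\tilde{r}\bigr)
\]
then give the unique extensions to a skew derivation on $\tilde{S}^{-1}\tilde{R}$ (by the usual localisation argument, with well-definedness checked using the Ore condition and the identities $\tilde{\sigma}(\tilde{u}\tilde{r})=\tilde{\sigma}(\tilde{u})\tilde{\sigma}(\tilde{r})$ and the Leibniz rule for $\tilde{\delta}$). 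Gradedness of these extensions is immediate by counting degrees: if $\tilde{s},\tilde{r}$ are homogeneous of degrees $a$ and $b$, then $\tilde{\sigma}(\tilde{s})^{-1}\tilde{\sigma}(\tilde{r})$ is homogeneous of degree $b-a$, and each summand in the formula for $\tilde{\delta}(\tilde{s}^{-1}\tilde{r})$ is likewise of degree $b-a$.

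Finally, preservation of the ideals $(t)$ and $(t-1)$ of $\tilde{S}^{-1}\tilde{R}$ is an immediate consequence of $\tilde{\sigma}(t)=t$ and $\tilde{\delta}(t)=0$: for any $y\in\tilde{S}^{-1}\tilde{R}$,
\[
\tilde{\sigma}(ty)=t\,\tilde{\sigma}(y),\qquad \tilde{\delta}(ty)=\tilde{\delta}(t)y+\tilde{\sigma}(t)\tilde{\delta}(y)=t\,\tilde{\delta}(y),
\]
and similarly with $t$ replaced by $t-1$ (using $\tilde{\sigma}(t-1)=t-1$ and $\tilde{\delta}(t-1)=0$), so both one-sided (and hence two-sided) ideals are preserved.

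Of the steps above, the only non-formal one is the verification that $\tilde{\sigma}$ preserves $\tilde{S}$, which I regard as the mild main point: it is precisely the extra positive-degree condition $\deg_w(\sigma-\mathrm{id})>0$ (rather than the weaker $\deg_w(\sigma)\geq 0$) that makes $\tilde{\sigma}$ act as the identity on $\gr_w(R)$, and therefore allows the microlocalisation to carry along with $\tilde{\sigma}$ and $\tilde{\delta}$. Everything else is a routine check from the definitions.
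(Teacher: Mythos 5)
Your proof is correct and follows essentially the same route as the paper, whose proof is simply the one-line observation that everything follows from compatibility of $(\sigma,\delta)$ with $w$; you have filled in the routine verifications (well-definedness and gradedness of $\tilde{\sigma},\tilde{\delta}$, the skew-derivation axioms, preservation of $\tilde{S}$, and the ideal computations) accurately. In particular your identification of $\tilde{\sigma}(\tilde{S})\subseteq\tilde{S}$, via $\rho_2\circ\tilde{\sigma}=\rho_2$, as the one point genuinely using $\deg_w(\sigma-\mathrm{id})>0$ is exactly the content the paper is implicitly invoking.
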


\begin{proof}
This follows from the fact that $(\sigma,\delta)$ is compatible with $w$.\end{proof}

\begin{thm}\label{thm: compatible extension}
The natural extension of $(\sigma,\delta)$ to $S^{-1}R$ is compatible with $w'$.
\end{thm}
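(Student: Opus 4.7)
The plan is to work directly from the explicit valuation formula given in Proposition \ref{propn: Rees and associated graded of microlocalisation}, namely $w'(s^{-1}r) = w(r) - w(s)$ for $r \in R$, $s \in S$, and to show separately that $\deg_{w'}(\sigma - \mathrm{id}) \geq 1$ and $\deg_{w'}(\delta) \geq 1$. Fix $x \in S^{-1}R$. By the minimality clause of Proposition \ref{propn: Rees and associated graded of microlocalisation}, we may choose a representation $x = s^{-1}r$ with $w'(x) = w(r) - w(s)$. The standard extension formulae then give
\begin{equation*}
\sigma(x) - x = \sigma(s)^{-1}\bigl[(\sigma(r) - r) - (\sigma(s) - s)\,s^{-1}r\bigr], \qquad \delta(x) = \sigma(s)^{-1}\bigl[\delta(r) - \delta(s)\,s^{-1}r\bigr].
\end{equation*}

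The next step is to bound the valuations of the individual pieces on the right. Compatibility of $(\sigma,\delta)$ with $w$ on $R$ yields $w(\sigma(r)-r), w(\delta(r)) \geq w(r)+1$ and $w(\sigma(s)-s), w(\delta(s)) \geq w(s)+1$. Since $w' \geq w$ on $R$, these give the same lower bounds for $w'$. A consequence is that $w(\sigma(s)) = w(s)$ (the minimum is attained by $s$), so by Proposition \ref{propn: Rees and associated graded of microlocalisation} $w'(\sigma(s)^{-1}) = -w(s)$; and of course $w'(s^{-1}r) = w(r) - w(s)$. Assembling these, for each bracketed expression the minimum of the two summand valuations is at least $w(r)+1$, and then pre-multiplying by $\sigma(s)^{-1}$ yields
\begin{equation*}
w'(\sigma(x)-x),\ w'(\delta(x)) \ \geq\ -w(s) + (w(r)+1) \ =\ w'(x)+1.
\end{equation*}
This is exactly the claim.

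There is no substantive obstacle here; the result is essentially a bookkeeping exercise, and the key inputs are Proposition \ref{propn: Rees and associated graded of microlocalisation} (in particular the equalities $w'(\sigma(s)^{-1}) = -w(s)$ and $w'(s^{-1}r) = w(r)-w(s)$ at the chosen representation) and the compatibility of $(\sigma,\delta)$ with $w$. The only point requiring mild care is verifying $w(\sigma(s)) = w(s)$, which follows from $w(\sigma(s)-s) > w(s)$. An alternative route, more in the spirit of Lemma \ref{Rees-skew derivation}, would be to observe that compatibility with $w$ translates on the Rees ring to $(\tilde{\sigma}-\mathrm{id})(\tilde{R}_n) \subseteq t\tilde{R}_{n+1}$ and $\tilde\delta(\tilde R_n)\subseteq t\tilde R_{n+1}$, extend this inclusion to $\tilde{S}^{-1}\tilde{R}$ using that $t$ is central, and then push through $\rho_1$ via the defining identity $F_n'(S^{-1}R) = \rho_1((\tilde{S}^{-1}\tilde{R})_n)$; but the direct approach above seems cleaner.
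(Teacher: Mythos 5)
Your proposal is correct, but it takes a genuinely different (more hands-on) route than the paper. The paper argues entirely at the level of the Rees ring: compatibility of $(\sigma,\delta)$ with $w$ says exactly that $\tilde{\delta}$ and $\tilde{\sigma}-\id$ map $\tilde{R}$ into $t\tilde{R}$ (Lemma \ref{Rees-skew derivation}); this inclusion passes to the localised Rees ring $\tilde{S}^{-1}\tilde{R}$ because the extension formulae only involve $\tilde{\delta}(\tilde{r})$, $\tilde{\delta}(\tilde{s})$ (and likewise for $\tilde{\sigma}-\id$) and $t$ is central; hence the induced maps vanish on $\gr_{w'}(S^{-1}R)\cong\tilde{S}^{-1}\tilde{R}/(t)$, which is precisely the statement that $\sigma-\id$ and $\delta$ have positive $w'$-degree. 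This is the \emph{alternative route} you sketch in your last sentence. Your main argument instead works directly in $S^{-1}R$ with the explicit description of $w'$ from Proposition \ref{propn: Rees and associated graded of microlocalisation}: choose a representation attaining $w'(x)=w(r)-w(s)$ (such a representation exists by that proposition), expand $\sigma(x)-x$ and $\delta(x)$ by the localisation formulae, and estimate term by term using $w'\geq w$ on $R$ and subadditivity. The Rees-ring proof buys the convenience of never handling valuations of products and inverses (everything is ideal membership); your computation buys an explicit elementwise bound $w'(d(x))\geq w'(x)+1$ without a second pass through the $\rho_1,\rho_2$ formalism. Both are sound.

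One step you should make explicit: to get $w'(\sigma(s)^{-1})=-w(s)$ from Proposition \ref{propn: Rees and associated graded of microlocalisation} (and indeed for $\sigma(s)$ to be invertible in $S^{-1}R$ at all, which the existence of the natural extension presupposes), you need $\sigma(s)\in S$, and the equality $w(\sigma(s))=w(s)$ alone does not formally give membership in $S$. It does, however, follow from the same inequality you already use: $w(\sigma(s)-s)>w(s)$ forces $\gr_w(\sigma(s))=\gr_w(s)\in T$ (as remarked after Definition \ref{defn: compatible}), and since $S$ is the saturated lift $\{r\in R:\gr_w(r)\in T\}$, this gives $\sigma(s)\in S$, whence $w'(\sigma(s)^{-1})=w'(1)-w(\sigma(s))=-w(s)$. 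With that sentence inserted, the rest of your bookkeeping (the identity for $\sigma(x)-x$, the bound on each bracketed summand, and the final assembly) is complete.
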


\begin{proof}
By Lemma \ref{Rees-skew derivation}, $(\tilde{\sigma}',\tilde{\delta}')$ is a skew derivation of $\tilde{S}^{-1}\tilde{R}$ which preserves the ideals generated by $t$ and $t-1$, so it follows from Proposition \ref{propn: Rees projection maps extend to the localised Rees ring} and Proposition \ref{propn: Rees and associated graded of microlocalisation} that it induces a skew derivation $(\sigma',\delta')$ of $S^{-1}R$, and this will coincide with the extension of $(\sigma,\delta)$ to $S^{-1}R$.

To prove compatibility, note that if we let $d$ be either $\delta$ or $\sigma-\mathrm{id}$, then $w(d(r))>w(r)$ for all $r\in R$ by compatibility, from which it follows that $\tilde{d}(rt^{-n})=d(r)t^{-n}=t(d(r)t^{-(n+1)})\in t\tilde{R}$. Thus $d(\tilde{R})\subseteq t\tilde{R}$, and hence $\tilde{d}'(\tilde{S}^{-1}\tilde{R})\subseteq t\tilde{S}^{-1}\tilde{R}$, and thus the induced map $d'$ is zero on the associated graded ring $\gr_{w'} S^{-1}R=\tilde{S}^{-1}\tilde{R}/(t)$. Therefore $(\sigma',\delta')$ is compatible with $w'$. \end{proof}

\subsection{Bounded skew power series rings}\label{subsec: bounded SPSRs}

Suppose we are given a skew derivation $(\sigma,\delta)$ on a ring $R$. The \emph{skew polynomial ring} $R[x;\sigma,\delta]$ is defined to be equal to $R[x]$ as a left $R$-module, with the (unique) ring structure determined by extending the rule
\begin{equation}\label{eqn: multiplication}
xa = \sigma(a)x + \delta(a)
\end{equation}
(for all $a\in R$) to an $R$-linear multiplication map $R[x]\times R[x] \to R[x]$.

We would like to form the \emph{skew power series ring} in the same way, beginning with the left $R$-module $R[[x]]$ and providing it with a unique ring structure by extending the rule (\ref{eqn: multiplication}) to a (continuous) $R$-linear multiplication. If this ring structure does indeed exist, we call the ring a \emph{skew power series ring}, and denote it $R[[x;\sigma,\delta]]$. However, in general, this ring structure may fail to exist due to convergence issues. To fix this, we will need some extra hypotheses on $R$, $\sigma$ and $\delta$: 

\begin{propn}\label{propn: skew-filtration}
Suppose that $u: R\to \mathbb{Z} \cup \{\infty\}$ is a separated filtration on $R$, that $R$ is complete with respect to $u$, and that $(\sigma,\delta)$ is a skew derivation on $R$ compatible with $u$.

\begin{enumerate}[label=(\roman*)]
\item The left $R$-module
$$R^b[[x;\sigma,\delta]] = \left\{ \sum_{n\geq 0} r_n x^n : r_n\in R,\, u(r_n) + \tfrac12 n \to \infty \text{ as } n \to \infty\right\}$$
is in fact a ring, with multiplication given by extending the rule (\ref{eqn: multiplication}) to a continuous $R$-linear multiplication in a unique way.
\item Define the function $f_u: R^b[[x;\sigma,\delta]] \to \frac12 \mathbb{Z}\cup\{\infty\}$ as follows: for all choices of $r_i\in R$, set
$$f_u\left(\sum_{i=0}^\infty r_i x^i\right) = \inf_{i\geq 0} \left\{u(r_i) + \tfrac{1}{2}i \right\}.$$
Writing $r\in R$ as the ``constant" series $r + 0x + 0x^2 + \dots\in R^b[[x;\sigma,\delta]]$, we may identify $R$ with a subring of $R^b[[x;\sigma,\delta]]$. Then $f_u$ is a ring filtration with $f_u|_R = u$, $R^b[[x;\sigma,\delta]]$ is complete with respect to $f_u$, and the identity automorphism on $\gr_u(R)$ extends to an isomorphism of graded rings $\gr_{f_u} (R^b[[x;\sigma,\delta]]) \to (\gr_u (R))[Z]$ mapping $x$ to $Z$.
\end{enumerate}
\end{propn}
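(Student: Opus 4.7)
The approach is to treat parts (i) and (ii) together by first establishing a single convergence estimate that will drive everything: existence and uniqueness of the multiplication, the filtration property of $f_u$, completeness, and the identification of the associated graded ring. The starting point is the familiar identity, proved by induction on $n$, that
\[
x^n r = \sum_{i=0}^n f_{n,i}(r)\, x^i,
\]
where each $f_{n,i} : R \to R$ is the sum of all $\binom{n}{i}$ compositions of $i$ copies of $\sigma$ and $n-i$ copies of $\delta$ (in various orders). Compatibility of $(\sigma,\delta)$ with $u$ gives $\deg_u(\sigma) \geq 0$ (since $\deg_u(\sigma - \id) \geq 1$) and $\deg_u(\delta) \geq 1$, so every such composition raises $u$-values by at least $n-i$; hence $u(f_{n,i}(r)) \geq u(r) + (n-i)$ for all $r \in R$ and all $0 \leq i \leq n$.

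For the product of $y_1 = \sum r_n x^n$ and $y_2 = \sum s_m x^m$ in $R^b[[x;\sigma,\delta]]$, the formal coefficient of $x^k$ is $c_k = \sum r_n f_{n,k-m}(s_m)$, summed over $m \leq k$ and $n \geq k-m$. The key estimate gives
\[
u(r_n f_{n,k-m}(s_m)) + \tfrac{k}{2} \;\geq\; u(r_n) + u(s_m) + n + m - \tfrac{k}{2} \;\geq\; \bigl(u(r_n) + \tfrac{n}{2}\bigr) + \bigl(u(s_m) + \tfrac{m}{2}\bigr),
\]
using $n + m \geq k$. Because $u(r_n) + n/2 \to \infty$ and $u(s_m) + m/2 \to \infty$, each $c_k$ converges in the complete ring $R$, and a short two-variable case analysis (splitting by whether $n$ or $m$ is ``large") confirms $u(c_k) + k/2 \to \infty$ as $k \to \infty$. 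Thus $y_1 y_2 \in R^b[[x;\sigma,\delta]]$, and the same inequality gives $f_u(y_1 y_2) \geq f_u(y_1) + f_u(y_2)$, so $f_u$ is a ring filtration. Associativity, distributivity, and the existence of $1$ transfer from the skew polynomial subring $R[x;\sigma,\delta]$ by continuity; uniqueness of the multiplication follows because any continuous $R$-linear extension of $(\ref{eqn: multiplication})$ is determined on $R[x;\sigma,\delta]$, which is $f_u$-dense in $R^b[[x;\sigma,\delta]]$.

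For completeness, a Cauchy sequence $\{y^{(k)}\}$ with $y^{(k)} = \sum_n r_n^{(k)} x^n$ has each coordinate sequence Cauchy in the $u$-complete ring $R$, hence convergent to some $r_n$; the uniform Cauchy condition then forces $u(r_n) + n/2 \to \infty$, so $\sum r_n x^n$ lies in $R^b[[x;\sigma,\delta]]$ and is the required limit. For the associated graded, define a graded additive map $\gr_{f_u}(R^b[[x;\sigma,\delta]]) \to (\gr_u R)[Z]$, with $Z$ placed in degree $1/2$, by sending the principal symbol of $r x^n$ at $f_u$-degree $u(r) + n/2$ to $\gr_u(r) Z^n$. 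A direct argument, lifting term by term, shows this is a bijection on each homogeneous piece and restricts to the identity on $\gr_u R$. Multiplicativity reduces to the generating relation: in $\gr_{f_u}$, the identity $xr = \sigma(r)x + \delta(r)$ in $R^b[[x;\sigma,\delta]]$ becomes $Z \gr_u(r) = \gr_u(r) Z$, since $\delta(r)$ has $u$-value $\geq u(r) + 1 > u(r) + \tfrac{1}{2}$ and so contributes zero at $f_u$-degree $u(r) + \tfrac{1}{2}$, while $\gr_u(\sigma(r)) = \gr_u(r)$ follows from $\deg_u(\sigma - \id) \geq 1$. Hence $Z$ is central and the map is an isomorphism of graded rings.

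The main obstacle is the convergence bookkeeping used in the second paragraph. The factor of $\tfrac{1}{2}$ in the definition of $f_u$ is precisely what makes the argument succeed: rewriting $x^n$ in terms of lower powers $x^i$ costs $(n-i)/2$ in $f_u$-weight, while compatibility supplies a compensating gain of $n-i$ from the $\delta$-factors, so the surplus $(n-i)/2$ is exactly what is needed to absorb the estimates on the coefficient sequences $\{r_n\}, \{s_m\}$ and deliver uniform control on $u(c_k) + k/2$. Once this accounting is made rigorous, the remaining claims — associativity, completeness, and the identification of $\gr_{f_u}(R^b[[x;\sigma,\delta]])$ with $(\gr_u R)[Z]$ — follow essentially formally.
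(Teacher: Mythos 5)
Your proposal is correct and takes essentially the same route as the paper: the paper simply delegates part (i) to \cite[\S 3.4]{letzter-noeth-skew} and part (ii) to \cite[Lemma 1.13, Remark 1.14]{woods-SPS-dim}, writing out only the completeness check, and your weight-$\tfrac12$ bookkeeping (the estimate $u(r_n f_{n,k-m}(s_m)) + \tfrac{k}{2} \geq (u(r_n)+\tfrac{n}{2}) + (u(s_m)+\tfrac{m}{2})$ coming from $\deg_u(\delta)\geq 1$, $\deg_u(\sigma-\id)\geq 1$) is exactly the computation underlying those references. The remaining steps — density of $R[x;\sigma,\delta]$ for uniqueness and associativity, coefficientwise convergence for completeness, and killing the $\delta(r)$ term so that $\gr(x)$ becomes a central polynomial variable $Z$ over $\gr_u(R)$ — all match the intended argument.
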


\begin{proof}
Part (i) follows exactly as in \cite[\S 3.4]{letzter-noeth-skew}, replacing $\mathfrak{i}^n$ with $F_n R$, the $n$th level set of the filtration $u$. Part (ii) then follows as in \cite[Lemma 1.13, Remark 1.14]{woods-SPS-dim}, except for the claim that $R^b[[x;\sigma,\delta]]$ is complete. To show this, take a sequence of elements $s^{(j)} = \sum_{i\geq 0} r^{(j)}_i x^i\in R^b[[x;\sigma,\delta]]$ such that $f_u(s^{(j)}) \to \infty$ as $j\to\infty$. Then, as $j\to\infty$, by definition we have $\inf_{i\geq 0}\{u(r^{(j)}_i) + \frac12 i\} \to \infty$, from which we may conclude that $u(r^{(j)}_i) + \frac12 i\to \infty$ for each $i$, and hence $u(r^{(j)}_i) \to\infty$ for each $i$. But as $R$ is complete with respect to $f_u|_R = u$, we are done.
\end{proof}

We call $R^b[[x;\sigma,\delta]]$ a \emph{bounded skew power series ring}. Note that if $R$ is positively filtered, i.e. $u(r)\geq 0$ for all $r\in R$, then the requirement that $u(r_n) + \frac12 n \to \infty$ becomes vacuous. In this case, we just call the resulting ring the \emph{skew power series ring}, denoted by $R[[x;\sigma,\delta]]$, and it contains all power series in $R$.

\begin{rks}
$ $

\begin{enumerate}[label=(\roman*)]
\item If $T$ is any positively filtered, $(\sigma,\delta)$-invariant subring of $R$, then $T[[x;\sigma,\delta]]$ is a subring of $R^b[[x;\sigma,\delta]]$.
\item If $\gr_u(R)$ is a prime ring, then $\gr_{f_u}(R^b[[x;\sigma,\delta]]) \cong \gr_u(R)[Z]$ is also a prime ring.
\item It is worth mentioning that the bounded skew power series ring is often far too large for many purposes. For instance, if $G$ is a compact $p$-adic Lie group, the appropriate notion of its Iwasawa algebra over $\mathbb{Q}_p$ is usually taken to be $\mathbb{Q}_p G := (\mathbb{Z}_p G)[\tfrac{1}{p}]$, an \emph{incomplete} ring. However, if a complete filtered ring is prime, then any dense subring is also prime, so our results for bounded skew power series rings can also provide information about these incomplete subrings.
\end{enumerate}
\end{rks}

\section{Extending prime ideals}\label{section: thm A}

In this section, we will explore how to prove under certain conditions that the bounded skew power series ring $R^{b}[[x;\sigma,\delta]]$ is prime, which will be important for the proof of our main result Theorem \ref{A}.

\subsection{Ideals in skew power series rings}

Let $R$ be a ring, carrying a complete, separated filtration $w:R\to\mathbb{Z}\cup\{\infty\}$, let $(\sigma,\delta)$ be a skew derivation, compatible with $w$, and let $S=R^b[[x;\sigma,\delta]]$ be the bounded skew power series ring. Note that the well-definedness of the multiplicative structure on $S$ depends on some notion of compatibility between $(\sigma,\delta)$ and the topology of $R$, but the following result ensures that we can often change the filtration without affecting the ring structure. In the statement, we use the notation $f_u$ from Proposition \ref{propn: skew-filtration}.

\begin{propn}\label{propn: independence of filtration}

Suppose that $R$ carries a complete filtration $v:R\to\mathbb{Z}\cup\{\infty\}$ such that the identity map $(R,w)\to (R,v)$ is continuous and $(\sigma,\delta)$ is compatible with $v$. Then the identity map $(S,f_w)\to (S,f_v)$ is a continuous ring isomorphism with respect to the multiplication defined using $w$ and the multiplication defined using $v$.

\end{propn}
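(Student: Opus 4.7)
My plan is to prove the proposition in two stages: a topological stage showing that $f_v$ is a well-defined filtration on $S$ and that the identity map $(S, f_w) \to (S, f_v)$ is continuous, and an algebraic stage showing that the two multiplication structures on $S$ (one defined via $w$, one via $v$) actually agree.

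For the topological stage, continuity of $\mathrm{id}: (R,w) \to (R,v)$ means that for every $N \in \mathbb{Z}$ there exists $M \in \mathbb{Z}$ with $F_M^w R \subseteq F_N^v R$. I would use this to show that for any series $\sum r_n x^n \in S$, the convergence condition $w(r_n) + \tfrac12 n \to \infty$ forces $v(r_n) + \tfrac12 n \to \infty$ as well. This requires a careful case analysis: for indices $n$ where $w(r_n)$ is large we invoke continuity directly on $R$, while for indices where the growth of $f_w$ is carried by $\tfrac12 n$ one controls $v(r_n)$ by noting that only finitely many $r_n$ can have $w(r_n)$ below any given threshold (for each fixed threshold) and using the discrete nature of the filtrations. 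This establishes that $f_v$ takes values in $\tfrac12\mathbb{Z}\cup\{\infty\}$ on $S$ and that the identity map is continuous; by a parallel argument (or by closure under $f_v$-limits of $w$-bounded sequences), $(S, f_v)$ is also complete.

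For the algebraic stage, I would use density of the skew polynomial subring $R[x;\sigma,\delta] \subset S$ (dense in both the $f_w$- and $f_v$-topologies by Proposition \ref{propn: skew-filtration}(i) and the topological stage). On polynomials, the multiplication is determined purely algebraically by the rule $xr = \sigma(r)x + \delta(r)$, so the two multiplications agree on $R[x;\sigma,\delta]$. For arbitrary $a, b \in S$, choose polynomial approximations $a_n, b_n \to a, b$ in $(S, f_w)$; their polynomial products $a_n b_n$ then converge to the $w$-product $a \cdot_w b$ in the $f_w$-topology by $f_w$-continuity of the $w$-multiplication. By the topological stage, these products converge to the same limit $a \cdot_w b$ in the $f_v$-topology. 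But by $f_v$-continuity of the $v$-multiplication, the sequence $a_n b_n$ also converges in $(S, f_v)$ to the $v$-product $a \cdot_v b$. Uniqueness of limits in the separated ring $(S, f_v)$ then forces $a \cdot_w b = a \cdot_v b$.

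The main obstacle lies in the topological stage: the condition $w(r_n) + \tfrac12 n \to \infty$ does \emph{not} imply $w(r_n) \to \infty$ (since the $\tfrac12 n$ term may carry the growth), so a termwise application of continuity on $R$ is insufficient. One must carefully split into the cases where each of the two terms dominates the valuation, and control the coefficients $v(r_n)$ for the indices at which $w(r_n)$ remains small. Once this compatibility of the two filtrations on the coefficient level is in place, the rest of the argument is a routine density and uniqueness-of-limits argument.
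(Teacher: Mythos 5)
Your second stage is essentially the paper's own proof: the paper establishes multiplicativity in one line by noting that the identity map is multiplicative on the skew polynomial ring $R[x;\sigma,\delta]$, which is dense in $R^b[[x;\sigma,\delta]]$, and your approximation-plus-uniqueness-of-limits argument is exactly this written out in detail. That part is fine once the topological stage is in place.

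The gap is in your topological stage. The finiteness claim you lean on --- that only finitely many coefficients can satisfy $w(r_n) < T$ for a fixed threshold $T$ --- is false: the condition $w(r_n)+\tfrac12 n\to\infty$ is compatible with $w(r_n)$ being constant, or even tending to $-\infty$ provided it does so more slowly than $-\tfrac12 n$, and at such indices continuity of $\mathrm{id}\colon (R,w)\to(R,v)$ gives no lower bound whatsoever on $v(r_n)$. Worse, the implication you are trying to prove does not follow from continuity alone: take $R=\mathbb{Q}_p$ with $w=v_p$, $v=2v_p$, $\sigma=\mathrm{id}$, $\delta=0$. Both filtrations are complete and separated, the identity $(R,w)\to(R,v)$ is continuous, and $(\sigma,\delta)$ is compatible with both; yet for $r_n=p^{-\lceil n/3\rceil}$ one has $w(r_n)+\tfrac12 n\to\infty$ while $v(r_n)+\tfrac12 n\to-\infty$, so $\sum_n r_nx^n$ lies in the $w$-bounded ring but $f_v$ is not even finite on it, and the identity $(S,f_w)\to(S,f_v)$ is not continuous (consider the tails $t_N=\sum_{n\geq N}r_nx^n$). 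Hence no case analysis starting only from continuity can close this step; what is needed is a quantitative comparison between the filtrations on $R$, e.g. $v\geq w$, or more generally $v\geq aw-b$ for constants $0<a\leq1$ and $b$, under which the coefficientwise transfer of the convergence condition, and with it the well-definedness and continuity of the identity on $S$, is immediate. Such a comparison is available wherever the paper actually invokes the proposition (for instance $w'(r)\geq w(r)$ in Properties \ref{props: the filtration w'}(iii), and the subsequent filtrations dominate rescalings of $w$ by factors at most $1$ up to additive constants), and the paper's own proof simply treats the topological part as clear from the definitions; but as written your stage one would fail, so you should either assume such a comparison explicitly or verify it in the situation where you apply the result.
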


\begin{proof}

From the definition of the filtrations $f_w$ and $f_v$, it is clear that the identity map $S\to S$ is a continuous, additive bijection. It follows immediately that it is multiplicative, since its restriction to the skew polynomial ring $R[x;\sigma,\delta]$ is multiplicative, and $R[x;\sigma,\delta]$ is dense in $R^b[[x;\sigma,\delta]]$.\end{proof}

Now, let us suppose that $w$ is a Zariskian filtration. In this case, for any closed two-sided ideal $I$, the quotient ring $R/I$ is complete with respect to the natural quotient filtration $\overline{w}(r+I)=\sup\{w(r+y):y\in I\}$, which is still a Zariskian filtration.

\begin{lem}\label{quotient}
Let $I$ be a closed, $(\sigma,\delta)$-invariant ideal of $R$, and let $(\overline{\sigma},\overline{\delta})$ be the induced skew derivation of $R/I$, then $(\overline{\sigma},\overline{\delta})$ is compatible with the quotient filtration $\overline{w}$. The closure $\overline{IS}$ is a two-sided ideal of $S$ and $S/\overline{IS}\cong (R/I)^b[[x;\overline{\sigma},\overline{\delta}]]$.
\end{lem}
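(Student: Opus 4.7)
I would proceed by writing down the coefficient-wise reduction map $\Pi: S \to (R/I)^b[[x;\overline\sigma,\overline\delta]]$ explicitly and identifying $\overline{IS}$ as its kernel. The three claims of the lemma then correspond to (a) checking that the target is a valid bounded skew power series ring, (b) noting that the kernel of a ring homomorphism is automatically a two-sided ideal, and (c) applying the first isomorphism theorem.

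First I would verify compatibility of $(\overline\sigma,\overline\delta)$ with $\overline w$. Fix $r \in R$, let $e > 0$ be the common lower bound for $\deg_w(\sigma - \mathrm{id})$ and $\deg_w(\delta)$, and take $y \in I$. Because $I$ is $(\sigma,\delta)$-invariant, $\sigma(y) - y \in I$ and $\delta(y) \in I$, so
\[
\sigma(r) - r + \bigl(\sigma(y) - y\bigr) = \sigma(r+y) - (r+y) \in F_{w(r+y) + e}R
\]
and similarly $\delta(r) + \delta(y) = \delta(r+y) \in F_{w(r+y) + e}R$. Taking the supremum over $y \in I$ gives $\overline w(\overline\sigma(\overline r) - \overline r) \geq \overline w(\overline r) + e$ and $\overline w(\overline \delta(\overline r)) \geq \overline w(\overline r) + e$, so $(\overline \sigma, \overline\delta)$ is compatible with $\overline w$. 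By Proposition \ref{propn: skew-filtration} this makes $(R/I)^b[[x;\overline\sigma,\overline\delta]]$ a well-defined ring.

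Next, define $\Pi\bigl(\sum_n r_n x^n\bigr) = \sum_n (r_n + I)x^n$. Well-definedness and continuity follow from $\overline w(r_n + I) \geq w(r_n)$, which propagates the bounded-coefficient condition. On the dense subring $R[x;\sigma,\delta]$, $\Pi$ intertwines the defining commutation relations $xr = \sigma(r)x + \delta(r)$ and $x(r+I) = \overline\sigma(\overline r)x + \overline\delta(\overline r)$, so $\Pi$ is multiplicative on polynomials and hence on $S$ by continuity. For surjectivity, given any target series, I would use closedness of $I$ together with the fact that $w$ takes values in the discrete set $\mathbb{Z} \cup \{\infty\}$ to lift each coefficient $r_n + I$ to some $r'_n \in R$ with $w(r'_n) = \overline w(r_n + I)$; the convergence condition in the target then transfers back to $S$.

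Finally, I would identify $\overline{IS} = \ker \Pi$. One inclusion is automatic: $\Pi(IS) = 0$ since $I \cdot R \subseteq I$, and $\ker \Pi$ is closed by continuity of $\Pi$, so $\overline{IS} \subseteq \ker \Pi$. For the reverse inclusion, observe that any $s = \sum b_n x^n \in \ker \Pi$ has all coefficients $b_n \in I$, and the polynomial truncations $\sum_{n=0}^N b_n x^n$ lie in $IS$ and converge to $s$ in the $f_w$-filtration; hence $s \in \overline{IS}$. This identifies $\overline{IS}$ with the kernel of a ring homomorphism, settling both the claim that $\overline{IS}$ is a two-sided ideal and, by the first isomorphism theorem, the isomorphism $S/\overline{IS} \cong (R/I)^b[[x;\overline\sigma,\overline\delta]]$.

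The main obstacle is the compatibility check: one must recognise that neither $\sigma$ nor $\delta$ preserves representatives of cosets, but the \emph{differences} $\sigma(y) - y$ and $\delta(y)$ land in $I$ and can therefore be absorbed into a new coset representative $r + y$. The remaining steps are largely formal, provided one uses closedness of $I$ to control both the definition of $\overline w$ and the lifting needed for surjectivity.
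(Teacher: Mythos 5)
Your proposal is correct and follows essentially the same route as the paper: the compatibility of $(\overline\sigma,\overline\delta)$ with $\overline w$ is obtained by perturbing by elements of $I$ (using $\sigma(I)\subseteq I$, $\delta(I)\subseteq I$) exactly as in the paper's supremum computation, and the rest is the paper's coefficient-wise reduction map whose kernel is identified with $\overline{IS}$. You merely spell out in more detail the routine points (surjectivity via lifting coefficients, multiplicativity via density, kernel equals closure via truncations) that the paper treats as clear.
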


(Compare \cite[Lemma 3.14]{letzter-noeth-skew}.)

\begin{proof}

Let $d$ be either $\sigma-\mathrm{id}$ or $\delta$. To prove that $(\overline{\sigma},\overline{\delta})$ is compatible with $\overline{w}$, we just need to show that $\overline{w}(\overline{d}(r+I))>\overline{w}(r+I)$ for all $r\in R$. But
\begin{align*}
&&\overline{w}(\overline{d}(r+I)) &= \overline{w}(d(r) + I)\\
&&&= \sup_{y\in I} w(d(r) + y)\\
&&&\geq \sup_{z\in I} w(d(r)+d(z))& \text{as } d(I) \subseteq I\\
&&&> \sup_{z\in I} w(r+z)& \text{as } d \text{ has positive } w \text{-degree}\\
&&&= \overline{w}(r+I).
\end{align*}

Clearly $\overline{IS}$ is a closed, right ideal of $S$. Consider the natural map 

\begin{center}
$R^b[[x;\sigma,\delta]]\to (R/I)^b[[\overline{x};\overline{\sigma},\overline{\delta}]],rx^n\mapsto (r+I)\overline{x}^n$.
\end{center} 

This is clearly a surjective ring homomorphism, and its kernel is the set of all power series $\displaystyle \sum_{n\in\mathbb{N}} r_n x^n$ such that $r_n\in I$ for all $n$, which is clearly equal to the closure of $IS$ in $S$. Therefore $\overline{IS}$ is a two-sided ideal, and the quotient $S/\overline{IS}$ is isomorphic to $(R/I)^b[[\overline{x};\overline{\sigma},\overline{\delta}]]$ as required.\end{proof}

Hence, given a prime $(\sigma,\delta)$-ideal $P$ of $R$, this lemma tells us that the induced ideal $\overline{PS}$ is prime in $S$ if and only if the skew power series ring $(R/P)^b[[x; \sigma,\delta]]$ is a prime ring. So we may reduce to the case $P = 0$ throughout.

\textbf{Note:} If we assume that $w$ is positive, then $f_w$ is also positive and Zariskian, so it follows from \cite{LVO}[Corollary 2.1.5] that all one-sided ideals in $R[[x;\sigma,\delta]]$ are closed, so we may just write $\overline{PS}$ as $PS$.

\subsection{Criterion for primeness}

Again, let $R$ be a ring carrying a separated filtration $v:R\to e\mathbb{Z}\cup\{\infty\}$. First consider the following well known result.

\begin{lem}\label{gr-prime}
If the associated graded ring $\gr_v R$ is a prime ring, then $R$ is prime.
\end{lem}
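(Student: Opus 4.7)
The plan is to argue by contrapositive: assume $R$ is not prime, so there exist nonzero $a,b\in R$ with $aRb=0$, and deduce that $\gr_v R$ is not prime. Because $v$ is separated, the principal symbols $\bar a:=\gr_v(a)$ and $\bar b:=\gr_v(b)$ are nonzero homogeneous elements of $\gr_v R$. The aim is to show that
\[
\bar a \cdot \gr_v(R) \cdot \bar b \;=\; 0,
\]
which directly contradicts the assumed primeness of $\gr_v R$.

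By additivity, it suffices to show $\bar a X \bar b = 0$ for every homogeneous $X\in \gr_v R$. Given such $X$ of degree $n$, lift it to some $r\in F_n R$ with $r + F_{n^+}R = X$, i.e.\ $\gr_v(r)=X$. Here I invoke the one standard fact about principal symbols that underlies the whole argument: for any $x,y\in R$ one always has $v(xy)\geq v(x)+v(y)$, and equality holds precisely when $\gr_v(x)\gr_v(y)\neq 0$, in which case $\gr_v(x)\gr_v(y)=\gr_v(xy)$; otherwise $\gr_v(x)\gr_v(y)=0$. Applying this twice to $a\cdot r\cdot b$ (which equals $0$ by hypothesis on $a,b$), the product $\bar a X \bar b$ is forced either to equal $\gr_v(arb)=0$ or to vanish at an earlier stage. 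In either case $\bar a X \bar b=0$, as required.

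I do not foresee any real obstacle here; the statement is the standard ``primeness lifts from the associated graded'' principle, and its proof uses only the definition of $\gr_v$ and the separatedness of $v$, both of which are in force by our running conventions on filtrations (rank-one discrete, separated).
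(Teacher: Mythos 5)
Your proof is correct, and it rests on exactly the same mechanism as the paper's: the fact that principal symbols satisfy $\gr_v(x)\gr_v(y)=\gr_v(xy)$ or $0$, together with separatedness of $v$. The only difference is packaging — you argue by contrapositive with the element-wise criterion $aRb=0$, while the paper takes ideals $I,J$ with $IJ=0$ and shows the graded ideals $\gr I,\gr J$ multiply to zero — and this is not a substantive divergence.
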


\begin{proof}

Suppose that $I,J$ are two-sided ideals of $R$ such that $IJ=0$, then let $\gr I$ be the additive subgroup of $\gr_v R$ generated by elements of the form $\gr(y)$ for $y\in I$, and let $\gr J$ be defined similarly. Then $\gr I$ and $\gr J$ are two-sided ideals of $\gr R$, and $(\gr I)(\gr J)=0$. So since $\gr R$ is prime, we may assume that $\gr I=0$, and hence $\gr(y)=0$ for all $y\in I$.

So since $v$ is a separated filtration, this implies that $I=0$, and hence $R$ is prime.\end{proof}

Using Proposition \ref{propn: skew-filtration}, we see that the associated graded ring of $R^b[[x;\sigma,\delta]]$ with respect to $f_v$ is $(\gr_v R)[X]$, so it follows that if $\gr_v R$ is prime then $\gr_{f_v} R^b[[x;\sigma,\delta]]$ is also prime, and hence $R^b[[x;\sigma,\delta]]$ is a prime ring by the lemma.

So, from now on, we will assume that $R$ is a prime Noetherian ring, and we will assume that $w:R\to\mathbb{Z}\cup\{\infty\}$ is a Zariskian filtration, and that $(\sigma,\delta)$ is compatible with $w$. Let $Q(R)$ be the simple Goldie ring of quotients of $R$, and recall that there is a unique extension of $(\sigma,\delta)$ to a skew derivation of $Q(R)$.

\textbf{Assumption:} We suppose that $Q(R)$ carries a separated filtration $u$ such that $(\sigma,\delta)$ is compatible with $u$, the natural inclusion map $(R,w)\to (Q(R),u)$ is continuous, and $\gr_u Q(R)$ is a prime, Noetherian ring.

Let $Q$ be the completion of $Q(R)$ with respect to $u$. Then $(\sigma,\delta)$ extends uniquely to a skew derivation of $Q$ by Lemma \ref{lem: compatible skew derivations can be induced up to completion}, and this extension will still be compatible with $u$. Since $\gr_u Q= \gr_u Q(R)$ is prime and Noetherian, and $\gr_{f_u} Q^b[[x;\sigma,\delta]]\cong (\gr_u Q)[Z]$, it follows from Lemma \ref{gr-prime} that $Q^b[[x;\sigma,\delta]]$ is prime and Noetherian.

From now on, set $T:=Q^b[[x;\sigma,\delta]]$. Note that since the inclusion map $(R,w) \to  (Q(R),u)$ is continuous, it follows that the identity $(R,w)\to (R,u)$ is continuous, so using Proposition \ref{propn: independence of filtration} we see that $R^b[[x;\sigma,\delta]]$ is a subring of $T$. Using this, we will prove that the skew power series ring $S:=R^b[[x;\sigma,\delta]]$ is prime.

\begin{propn}\label{two-sided}
If $I$ is a closed, two-sided ideal of $S$ then $\overline{TI}$ is a two-sided ideal of $T$.
\end{propn}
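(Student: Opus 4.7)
The plan is to show that $\overline{TI}$, which is automatically a closed left ideal of $T$ (as $TI$ is a left ideal and left multiplication in $T$ is continuous, so $T\cdot\overline{TI}\subseteq\overline{T\cdot TI}=\overline{TI}$), is also closed under right multiplication by all of $T$. To this end, define $\mathcal{N}:=\{t\in T : \overline{TI}\cdot t\subseteq\overline{TI}\}$; by continuity of right multiplication and closedness of $\overline{TI}$, this is an intersection of closed preimages and hence a closed subring of $T$. It then suffices to show $\mathcal{N}$ contains a dense subring of $T$.

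A natural dense subring is $Q(R)[x;\sigma,\delta]$, which is dense because $Q(R)$ is dense in $Q$ (as $Q$ is the $u$-completion of $Q(R)$) and finite polynomials in $x$ approximate arbitrary power series in the $f_u$-topology. Since $\mathcal{N}$ is a subring, we need only verify its containment of the generators of $Q(R)[x;\sigma,\delta]$. We immediately obtain $S\subseteq\mathcal{N}$, and in particular $x\in\mathcal{N}$, from the fact that $I$ is a right $S$-ideal: $(TI)\cdot S = T(IS) \subseteq TI$, which passes to the closure under continuity of right multiplication by a fixed element of $S$. Writing $q\in Q(R)$ as $q = rs^{-1}$ with $r\in R$ and $s\in R$ regular, the inclusion $Q(R)\subseteq\mathcal{N}$ reduces, via $r\in S\subseteq\mathcal{N}$, to the single claim that $\overline{TI}\cdot s^{-1}\subseteq\overline{TI}$ for every regular $s\in R$, which by closedness and continuity unwinds further to showing $is^{-1}\in\overline{TI}$ for each $i\in I$.

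This last step is where the real work lies. The strategy I would attempt is to approximate $i = \sum_n r_n x^n\in S$ by its truncations $i_N = \sum_{n\leq N}r_n x^n\in R[x;\sigma,\delta]$, which converge to $i$ in the $f_u$-topology and hence $i_N s^{-1}\to is^{-1}$ by continuity. For each fixed $N$, iterated use of the skew commutation $x s^{-1} = \sigma(s)^{-1}x - \sigma(s)^{-1}\delta(s)s^{-1}$ (and its higher analogues for $x^n s^{-1}$) together with the Ore condition in $R$, applied to the finite set of coefficients that arise, should allow one to rewrite $i_N s^{-1}$ as $(s_N')^{-1}\cdot j_N$ for some regular $s_N'\in R$ and some $j_N\in S$. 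The main obstacle is that the truncations $i_N$ are not themselves in $I$, so the $j_N$ produced by the Ore construction need not lie in $I$; one must arrange matters so that the contributions from $j_N\notin I$ vanish in the $f_u$-limit as $N\to\infty$, using crucially the convergence condition $u(r_n) + n/2\to\infty$ on the coefficients of $i\in I$ (together with Zariskianness of $w$ to control how $u(s_N')$ grows with $N$). This delicate interplay between the Ore condition and the filtration on the coefficients is where I expect the subtlest technical work to occur.
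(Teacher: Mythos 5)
Your reductions are exactly the ones the paper makes: $\overline{TI}$ is a closed left ideal, it absorbs $S$ on the right (hence $x$ and $R$), and by density and closedness everything comes down to showing $\overline{TI}\,s^{-1}\subseteq\overline{TI}$ for a regular $s\in R$. But that last inclusion is the entire content of the proposition, and your proposal does not prove it: what you offer is an unexecuted strategy, and the obstacle you yourself flag is fatal to it as stated. The truncations $i_N$ of an element $i\in I$ need not lie in $I$, so the elements $i_N s^{-1}$ (or their Ore-rewritten forms $(s_N')^{-1}j_N$) give you no sequence of elements of $TI$ converging to $is^{-1}$; to conclude $is^{-1}\in\overline{TI}$ you would need to show that $i_N s^{-1}$ is within distance tending to infinity (in the $f_u$-filtration) of the subset $TI$, and nothing in the sketch controls how far the Ore rewriting scatters the ``non-$I$ part'' of the coefficients. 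No estimate of this kind is given, and it is not clear one can be extracted along these lines.

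The paper sidesteps all of this with a short Noetherian argument, using the fact (established just before the proposition) that $T=Q^b[[x;\sigma,\delta]]$ is Noetherian because $\gr_{f_u}T\cong(\gr_u Q)[Z]$ is prime and Noetherian. Since $s\in R$ is regular and $\overline{TI}\,s\subseteq\overline{TI}$, one gets an ascending chain of left ideals
\[
\overline{TI}\;\subseteq\;\overline{TI}\,s^{-1}\;\subseteq\;\overline{TI}\,s^{-2}\;\subseteq\;\cdots
\]
inside $T$, which must terminate; from $\overline{TI}\,s^{-(n+1)}=\overline{TI}\,s^{-n}$ one multiplies on the right by $s^{n}$ to conclude $\overline{TI}\,s^{-1}=\overline{TI}$. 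No explicit expression for $is^{-1}$, no truncation, and no interplay between the Ore condition and the filtration is needed. I would recommend you replace the final paragraph of your argument with this chain argument; the preceding reductions can stay as they are.
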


\begin{proof}
Clearly $\overline{TI}$ is a left ideal of $T$ and $\overline{TI}S\subseteq \overline{TI}$, so it remains to prove that $\overline{TI}T\subseteq \overline{TI}$.

Since we know that $\overline{TI}x^n\subseteq \overline{TI}$ for all $n\in\mathbb{N}$, we only need to prove that $\overline{TI}q\subseteq \overline{TI}$ for all $q\in Q$. Since $Q(R)$ is dense in $Q$ and $\overline{TI}$ is closed in $T$, we can assume that $q\in Q(R)$, so $q=rs^{-1}$ for some $r,s\in R$, $s$ regular.

But $\overline{TI}r\subseteq \overline{TI}$ so we only need to prove that $\overline{TI}s^{-1}\subseteq \overline{TI}$.

Consider the chain of left ideals $\overline{TI}\supseteq \overline{TI}s\supseteq \overline{TI}s^2\supseteq\cdots$ in $T$. Then multiplying on the right by $s^{-n}$ for any $n\in\mathbb{N}$ gives $\overline{TI}\subseteq \overline{TI}s^{-1}\subseteq \overline{TI}s^{-2}\subseteq\cdots$, and since $T$ is Noetherian, this chain must terminate. So $\overline{TI}s^{-(n+1)}=\overline{TI}s^{-n}$ for some $n$, and it follows that $\overline{TI}s^{-1}=\overline{TI}$ as required.
\end{proof}

\begin{cor}\label{conclusion}
$R^b[[x;\sigma,\delta]]$ is a prime ring.
\end{cor}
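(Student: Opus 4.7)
The plan is to exploit the embedding $S \hookrightarrow T$, where $T = Q^b[[x;\sigma,\delta]]$ has already been shown to be prime Noetherian, and to transfer the primality of $T$ back to $S$ via Proposition \ref{two-sided}.

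First, suppose for contradiction that $I, J$ are non-zero two-sided ideals of $S$ with $IJ = 0$. Since the multiplication on $S$ is continuous with respect to the filtration $f_w$, the closures $\overline{I}$ and $\overline{J}$ are again two-sided ideals of $S$, and $\overline{I}\cdot\overline{J}\subseteq \overline{IJ} = 0$ (the zero ideal is closed because the filtration is separated). So I may assume from the outset that $I$ and $J$ are closed. By Proposition \ref{two-sided}, $\overline{TI}$ and $\overline{TJ}$ are then two-sided ideals of $T$.

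The key step is to show $\overline{TI}\cdot\overline{TJ} = 0$, after which primeness of $T$ forces one of them to vanish, and hence either $I\subseteq \overline{TI}=0$ or $J\subseteq \overline{TJ}=0$, contradicting our assumption. To establish this vanishing, I first note that $(TI)\cdot J = T\cdot(IJ) = 0$, so by continuity of right multiplication by any fixed $j \in J$, we get $\overline{TI}\cdot j \subseteq \overline{(TI)\cdot j} = \overline{0} = 0$, and hence $\overline{TI}\cdot J = 0$. Next, using that $\overline{TI}$ is a two-sided ideal of $T$, we have $\overline{TI}\cdot T \subseteq \overline{TI}$, so for any $x\in \overline{TI}$, $t\in T$, $j\in J$ we get $x\cdot(tj) = (xt)\cdot j \in \overline{TI}\cdot J = 0$. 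Thus $\overline{TI}\cdot TJ = 0$, and one final application of continuity of left multiplication by a fixed $x\in \overline{TI}$ yields $\overline{TI}\cdot\overline{TJ} \subseteq \overline{\overline{TI}\cdot TJ} = \overline{0} = 0$, as required.

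The only mildly delicate point in the above is the repeated use of continuity of multiplication to pass vanishing through closures; this is routine for a filtered ring, but one does have to be careful to use one-sided continuity at each step so as to exploit the two-sided ideal property of $\overline{TI}$ at the right moment. Everything else is a formal consequence of Proposition \ref{two-sided} together with primeness of $T$, so I do not anticipate any substantial obstacle.
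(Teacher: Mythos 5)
Your proof is correct and follows essentially the same route as the paper: reduce to closed ideals, apply Proposition \ref{two-sided} to get two-sided ideals $\overline{TI}$, $\overline{TJ}$ of $T$, show their product vanishes, and invoke primeness of $T$. The continuity arguments you spell out are exactly what the paper compresses into the single inclusion $(\overline{TI})(\overline{TJ})\subseteq\overline{TIJ}=0$.
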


\begin{proof}
Suppose $S$ has closed ideals $I,J$ such that $IJ=0$. Then by Proposition \ref{two-sided}, $\overline{TI}$ and $\overline{TJ}$ are two-sided ideals of $T$, and $(\overline{TI})(\overline{TJ})\subseteq \overline{TIJ}=0$. But we know that $T$ is a prime ring, so either $\overline{TI}=0$ or $\overline{TJ}=0$, meaning that either $I=0$ or $J=0$ as required.
\end{proof}

So, altogether, we have proved the following key result:

\begin{thm}\label{thm: key for Thm A}
Let $R$ be a prime, Noetherian ring carrying a complete Zariskian filtration $w:R\to\mathbb{Z}\cup\{\infty\}$ and a compatible skew derivation $(\sigma,\delta)$. Suppose further that the Goldie ring of quotients $Q(R)$ carries a filtration $u$ such that:

\begin{itemize}

\item $(R,w)\to (Q(R),u)$ is continuous.

\item $\gr_u Q(R)$ is prime and Noetherian.

\item The unique extension of $(\sigma,\delta)$ to $Q(R)$ is compatible with $u$.

\end{itemize}

Then the skew power series ring $R^b[[x;\sigma,\delta]]$ is prime.
\end{thm}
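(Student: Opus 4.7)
The overall approach is to package the machinery just developed in \S 2.2 into a clean statement: transport $(\sigma,\delta)$ to the $u$-completion $Q$ of $Q(R)$, build the bounded skew power series ring $T = Q^b[[x;\sigma,\delta]]$ there, show $T$ is prime via its associated graded, realise $S = R^b[[x;\sigma,\delta]]$ as a subring of $T$, and conclude by contracting from $T$ back to $S$.

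First I would form the completion $Q$ of $(Q(R), u)$. Since $(\sigma,\delta)$ is compatible with $u$, Lemma \ref{lem: compatible skew derivations can be induced up to completion} extends it uniquely to a skew derivation on $Q$ still compatible with (the extension of) $u$. As completion does not change the associated graded, $\gr_u Q \cong \gr_u Q(R)$ is prime and Noetherian. By Proposition \ref{propn: skew-filtration}(ii), the filtration $f_u$ on $T = Q^b[[x;\sigma,\delta]]$ is complete with $\gr_{f_u} T \cong (\gr_u Q)[Z]$, which is a polynomial ring over a prime Noetherian ring, hence itself prime and Noetherian. Lemma \ref{gr-prime} then gives that $T$ is prime, and standard filtered-to-graded lifting gives that $T$ is Noetherian.

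Next I would place $S$ inside $T$. Since the inclusion $(R,w) \to (Q(R), u)$ is continuous, so is the identity $(R,w) \to (R,u)$, and $(\sigma,\delta)$ is compatible with both $w$ and the restriction of $u$ to $R$. By Proposition \ref{propn: independence of filtration}, the underlying ring $R^b[[x;\sigma,\delta]]$ is canonically identified (as a ring) whether its multiplication is computed via $w$ or via $u$, and through the latter identification it embeds as a (filtered) subring of $T$.

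Finally, let $I, J$ be two-sided ideals of $S$ with $IJ = 0$. Replace them by their closures $\overline{I}, \overline{J}$ (still two-sided, still annihilating one another since multiplication is continuous and the filtration is separated). By Proposition \ref{two-sided}, $\overline{T\overline{I}}$ and $\overline{T\overline{J}}$ are two-sided ideals of $T$, and their product lies in $\overline{T \cdot \overline{I}\,\overline{J}} = 0$. Since $T$ is prime, one of them vanishes, forcing $\overline{I} = 0$ or $\overline{J} = 0$ and hence $I = 0$ or $J = 0$, as $w$ is separated. This shows $S$ is prime.

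The argument is essentially an assembly of pieces already in place; the only subtlety to check is the reduction to closed ideals in the last step, and the verification that $\gr_{f_u} T$ being prime is genuinely inherited from $\gr_u Q$ through the polynomial extension, which is immediate. The main conceptual load was absorbed into Proposition \ref{two-sided}, whose proof uses that $T$ is Noetherian to invert denominators of the form $s \in R$ regular; everything downstream of that is formal.
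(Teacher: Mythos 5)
Your proposal is correct and follows essentially the same route as the paper: extend $(\sigma,\delta)$ to the completion $Q$ of $(Q(R),u)$, show $T=Q^b[[x;\sigma,\delta]]$ is prime and Noetherian via $\gr_{f_u}T\cong(\gr_u Q)[Z]$ and Lemma \ref{gr-prime}, embed $S$ in $T$ using Proposition \ref{propn: independence of filtration}, and conclude with Proposition \ref{two-sided}. Your explicit reduction of arbitrary ideals to closed ones (via continuity of multiplication and separatedness) is a detail the paper's Corollary \ref{conclusion} leaves implicit, but the argument is the same.
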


For the remainder of this paper, we will focus on constructing this filtration $u$.

\section{Minimal $\sigma$-prime ideals}\label{section: thm C}

Continue to suppose that $R$ is a Noetherian ring admitting a skew derivation $(\sigma,\delta)$.

\subsection{The characteristic 0 case}

We begin by reminding the reader of the following two results, both essentially taken from \cite{bergen-grzeszczuk-radicals}. In the following, if $q$ is a central invertible element of $R$ and $n\in\mathbb{N}$, we will write the useful element $\{n!\}_q := (1)(1+q)(1+q+q^2)\dots(1+q+\dots+q^{n-1})\in R$.

\begin{lem}\cite[Lemma 3(i)]{bergen-grzeszczuk-radicals}\label{lem: I + delta(I) is an ideal}
Let $R$ be a ring, and $(\sigma,\delta)$ a skew derivation on $R$. If $I$ is a $\sigma$-ideal of $R$, then $I + \delta(I)$ is an ideal of $R$.\qed
\end{lem}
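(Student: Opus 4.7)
The plan is to show directly that $I + \delta(I)$ is closed under left and right multiplication by arbitrary elements of $R$; the additive subgroup structure is automatic. A general element has the form $a + \delta(b)$ with $a,b \in I$, and since $a$ already absorbs both left and right multiplication by $R$ (as $I$ is a two-sided ideal), the content is entirely in showing that $R \cdot \delta(I) \subseteq I + \delta(I)$ and $\delta(I) \cdot R \subseteq I + \delta(I)$.

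For the right multiplication $\delta(b) r$, I would invoke the skew Leibniz rule
\begin{equation*}
\delta(br) = \delta(b)r + \sigma(b)\delta(r),
\end{equation*}
and rearrange to obtain $\delta(b)r = \delta(br) - \sigma(b)\delta(r)$. Here $br \in I$ gives $\delta(br) \in \delta(I)$, while $\sigma(b) \in \sigma(I) \subseteq I$ (using the $\sigma$-invariance hypothesis) gives $\sigma(b)\delta(r) \in I$. Hence $\delta(b)r \in I + \delta(I)$.

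For the left multiplication $r\delta(b)$, the same Leibniz formula only directly controls $\sigma(r)\delta(b)$ rather than $r\delta(b)$. The trick is to exploit the fact that $\sigma$ is an automorphism: writing $r = \sigma(s)$ with $s := \sigma^{-1}(r) \in R$, one has
\begin{equation*}
r\delta(b) = \sigma(s)\delta(b) = \delta(sb) - \delta(s)b.
\end{equation*}
Since $I$ is a two-sided ideal, $sb \in I$ and $\delta(s)b \in I$, so $\delta(sb) \in \delta(I)$ and the right-hand side lies in $I + \delta(I)$.

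There is no real obstacle here; the only subtlety is that the $\sigma$-invariance of $I$ is needed in the right-multiplication step (to know $\sigma(b) \in I$), while the automorphism property of $\sigma$ is needed in the left-multiplication step (to rewrite $r = \sigma(s)$). Together these two features handle both sides.
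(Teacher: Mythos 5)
Your argument is correct: the Leibniz rule $\delta(ab)=\delta(a)b+\sigma(a)\delta(b)$, applied once for right multiplication (using $\sigma(I)\subseteq I$) and once after rewriting $r=\sigma(\sigma^{-1}(r))$ for left multiplication, is exactly the standard proof of this fact. The paper itself gives no proof -- it quotes the lemma from Bergen and Grzeszczuk -- so there is nothing further to compare; your write-up fills that gap correctly, and you are right to flag where the $\sigma$-invariance versus the surjectivity of $\sigma$ is actually used.
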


Recall that the \emph{prime radical} of a ring $R$ is the intersection of all its (minimal) prime ideals \cite[4.10.13]{lam-first-course}; equivalently, as $R$ is Noetherian, it is the unique largest nilpotent two-sided ideal of $R$ \cite[4.10.30]{lam-first-course}.

\begin{propn}\label{propn: q-skew case when delta preserves nilpotent ideals}
Let $R$ be a Noetherian ring, $(\sigma,\delta)$ a $q$-skew derivation on $R$ for some central $q\in R^\times$, and $N$ any $\sigma$-ideal of $R$. If $\delta^n(N^n) = 0$ for some positive integer $n$, and $\{n!\}_q$ is \emph{invertible} in $R$, then $\delta(N)^n \subseteq N$. In particular, if $N$ is the prime radical of $R$, then $\delta(N) \subseteq N$.
\end{propn}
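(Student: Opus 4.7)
The plan is to expand $\delta^n(a_1 a_2 \cdots a_n)$ using an iterated $q$-Leibniz rule, isolate the single term in which each factor $a_k$ receives exactly one derivative, and arrange for that term to equal $\{n!\}_q\,\delta(b_1)\cdots\delta(b_n)$ modulo $N$ by a well-chosen substitution. A first observation is that since $N$ is a $\sigma$-ideal in a Noetherian ring, the ascending chain $N\subseteq\sigma^{-1}(N)\subseteq\sigma^{-2}(N)\subseteq\cdots$ must stabilise, forcing $\sigma(N)=N$ and hence $\sigma^j(N)=N$ for every $j\in\mathbb{Z}$.

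A straightforward induction on $n$ from $\delta\sigma = q\sigma\delta$, $\sigma(q)=q$, $\delta(q)=0$ gives the $q$-analogue of Lemma \ref{lem: binomial delta^n}:
\[\delta^n(ab) \;=\; \sum_{k=0}^n \binom{n}{k}_{q}\,\delta^k\sigma^{n-k}(a)\,\delta^{n-k}(b),\]
where $\binom{n}{k}_q$ denotes the Gaussian binomial. Iterating this $n-1$ times expresses $\delta^n(a_1 a_2\cdots a_n)$ as a sum indexed by tuples $(e_1,\ldots,e_n)\in\mathbb{Z}_{\geq 0}^n$ with $\sum_k e_k = n$, each summand being a $q$-scalar times $\prod_k \delta^{e_k}\sigma^{f_k}(a_k)$ for explicit shifts $f_k$. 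Any term with $e_k=0$ for some $k$ contains the factor $\sigma^{f_k}(a_k)\in N$ and hence already lies in $N$. Because there are only $n$ factors summing to $n$, the one remaining tuple is $(1,1,\ldots,1)$; a telescoping product of Gaussian binomials gives its coefficient as $[n]_q[n-1]_q\cdots[1]_q = \{n!\}_q$, with accompanying monomial $\delta\sigma^{n-1}(a_1)\,\delta\sigma^{n-2}(a_2)\cdots\delta(a_n)$.

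Now, given arbitrary $b_1,\ldots,b_n\in N$, I would specialise $a_k := \sigma^{-(n-k)}(b_k)\in N$, which is permitted by $\sigma^{-1}(N)=N$. Then $\sigma^{n-k}(a_k)=b_k$, so the main term becomes exactly $\{n!\}_q\,\delta(b_1)\delta(b_2)\cdots\delta(b_n)$. The hypothesis $\delta^n(a_1\cdots a_n)=0$ therefore forces $\{n!\}_q\,\delta(b_1)\cdots\delta(b_n) \in N$, and dividing through by the unit $\{n!\}_q$ yields $\delta(b_1)\cdots\delta(b_n)\in N$, establishing $\delta(N)^n\subseteq N$.

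For the final assertion, take $N$ to be the prime radical, so $N^m = 0$ for some $m$, and pick any $n\geq m$ for which $\{n!\}_q$ is a unit; then $\delta^n(N^n)=0$ trivially and the previous paragraph yields $\delta(N)^n\subseteq N$. By Lemma \ref{lem: I + delta(I) is an ideal}, $J := N + \delta(N)$ is a two-sided ideal; expanding $J^n$ and using both that $N$ is an ideal and that $\delta(N)^n\subseteq N$ shows $J^n\subseteq N$, whence $J^{nm}\subseteq N^m=0$. Thus $J$ is nilpotent, hence contained in the prime radical $N$, giving $\delta(N)\subseteq N$. The main bookkeeping hurdle will be a clean statement and proof of the iterated $q$-Leibniz rule and the verification that the product of Gaussian binomials for the main term really collapses to $\{n!\}_q$; in the commuting case $q=1$ these reduce to the standard multinomial identity.
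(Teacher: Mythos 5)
Your argument is correct and follows essentially the same route as the paper, which quotes the Bergen--Grzeszczuk expansion, makes the same substitution $r_i=\sigma^{i-n}(s_i)$ (justified, as you note, by $\sigma(N)=N$), isolates the same ``all-ones'' term with coefficient $\{n!\}_q$ up to a power of $q$, and finishes the prime-radical case with the same nilpotency argument for $J=N+\delta(N)$ via Lemma \ref{lem: I + delta(I) is an ideal}. The only bookkeeping caveat is that with the paper's convention $\delta\sigma=q\sigma\delta$, the coefficient of your main term written in the form $\delta\sigma^{n-1}(a_1)\cdots\delta(a_n)$ is $q^{-n(n-1)/2}\{n!\}_q$ (Gaussian binomials in $q^{-1}$) rather than exactly $\{n!\}_q$, which is harmless since $q$ is a unit.
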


\begin{proof}
The following calculation is essentially identical to the one given in \cite[Lemma 4(iii)]{bergen-grzeszczuk-radicals}: if $s_1, \dots, s_n\in N$ are arbitrary and $r_i = \sigma^{i-n}(s_i)$ for all $1\leq i\leq n$, then
\begin{align*}
0 = \delta^n(r_1 r_2 \dots r_n) &\in \;\;\; \{n!\}_q \times \sigma^{n-1}\delta(r_1) \sigma^{n-2}\delta(r_2) \dots \sigma\delta(r_{n-1})\delta(r_n) + N\\
&= \dfrac{\{n!\}_q}{q^{(n-1)n/2}} \delta\sigma^{n-1}(r_1) \delta\sigma^{n-2}(r_2) \dots \delta\sigma(r_{n-1})\delta(r_n) + N\\
&= \dfrac{\{n!\}_q}{q^{(n-1)n/2}} \delta(s_1) \delta(s_2) \dots \delta(s_{n-1})\delta(s_n) + N,
\end{align*}
so if $\{n!\}_q$ is invertible, we see that $\delta(s_1) \delta(s_2) \dots \delta(s_{n-1})\delta(s_n) \in N$, i.e. that $\delta(N)^n \subseteq N$.

Now suppose that $N$ is the prime radical of $R$ (a $\sigma$-ideal of $R$, as the automorphism $\sigma$ permutes the minimal prime ideals of $R$): there exists $n$ such that $\delta^n(N^n) = 0$ as $N$ is a nilpotent ideal \cite{lam-first-course}. Now Lemma \ref{lem: I + delta(I) is an ideal} shows that $J := N + \delta(N)$ is an ideal of $R$. Since $J^m \subseteq N + \delta(N)^m$ for any $m$, we see that $J^n \subseteq N$ is a nilpotent ideal, and so $J \subseteq N$.
\end{proof}

Many examples satisfy the hypotheses of the above proposition, but we single out two broad and interesting classes in particular:

\begin{cor}\label{cor: prime radical preserved in char 0}
Suppose $R$ is a Noetherian algebra over a central subring $A$ which is a domain, and $(\sigma,\delta)$ is a $q$-skew derivation on $R$ for some $q\in A^\times$. Assume also \textit{either} that $q$ is not a root of unity \textit{or} that $\mathrm{char}(R) = 0$ and $q = 1$. Then $\delta$ preserves the prime radical $N$ of $R$.
\end{cor}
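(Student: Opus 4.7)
The plan is to invoke Proposition \ref{propn: q-skew case when delta preserves nilpotent ideals} directly. Since $R$ is Noetherian, its prime radical $N$ is nilpotent, so some $n\geq 1$ satisfies $N^n = 0$, whence $\delta^n(N^n) = 0$ holds trivially. By that proposition, it is then enough to show that $\{n!\}_q$ is invertible in $R$---or, as a closer reading of its proof reveals, merely a central left nonzerodivisor modulo $N$, since what is really used is to pass from $\{n!\}_q \cdot \delta(s_1)\cdots\delta(s_n) \in N$ to $\delta(s_1)\cdots\delta(s_n) \in N$.

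In Case 1, since $q\in A^\times$ is not a root of unity in the domain $A$, each $q^k - 1 = (q-1)[k]_q$ is nonzero in $A$, so $[k]_q \neq 0$ for every $k \geq 1$, and hence $\{n!\}_q = \prod_{k=1}^{n}[k]_q$ is a nonzero central element of $R$. In Case 2, $\{n!\}_q = n!$ is similarly nonzero in $A\hookrightarrow R$ via the canonical map $\mathbb{Z} \hookrightarrow A$, which is an embedding because $\mathrm{char}(R) = 0$.

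The main obstacle is to strengthen ``$\{n!\}_q$ is nonzero in $A$'' to ``$\{n!\}_q$ is a nonzerodivisor modulo $N$''. Since $R/N$ is semiprime Noetherian, a central element is a nonzerodivisor iff it lies outside every minimal prime, so it suffices to check $\{n!\}_q \notin P$ for each minimal prime $P$ of $R$. For such a $P$, the image $\bar q$ of $q$ in the prime ring $R/P$ is a central unit; if $\{n!\}_q \in P$ then $\bar q^k = 1$ for some $k \leq n$, forcing $q^k - 1 \in P \cap A$. In Case 1 this contradicts the assumption that $q$ is not a root of unity in the domain $A$ (once the intersection $P \cap A$ is under control, e.g.\ using that $A$ embeds into $R/P$); in Case 2 it analogously forces $n!\in P\cap A$, contrary to $\mathrm{char}(R) = 0$. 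Once the cancellability of $\{n!\}_q$ modulo $N$ is established, Proposition \ref{propn: q-skew case when delta preserves nilpotent ideals} gives $\delta(N)^n \subseteq N$; then $J := N + \delta(N)$ is an ideal by Lemma \ref{lem: I + delta(I) is an ideal}, with $J^n \subseteq N + \delta(N)^n \subseteq N$, so $J/N$ is a nilpotent ideal in the semiprime ring $R/N$, forcing $J \subseteq N$ and therefore $\delta(N) \subseteq N$, as required.
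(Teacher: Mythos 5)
Your reduction of Proposition \ref{propn: q-skew case when delta preserves nilpotent ideals} to the cancellability of $\{n!\}_q$ modulo $N$, and the observation that for a central element this amounts to avoiding every minimal prime of $R$, are both fine. The genuine gap is the claim that $\{n!\}_q \notin P$ for each minimal prime $P$. In Case 1 this is exactly the point you leave hanging: $P \cap A$ need not be zero and $A$ need not embed into $R/P$, so $q^k - 1 \in P \cap A$ gives no contradiction with $q$ not being a root of unity in $A$. In Case 2 the claimed contradiction is a non sequitur: $\mathrm{char}(R) = 0$ says only that $n! \neq 0$ in $R$ (equivalently in $A$), not that $n!$ avoids the minimal primes of $R$.

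Moreover, this step cannot be repaired at the stated level of generality, because the corollary as literally stated fails. Take $R = \mathbb{Z} \times \mathbb{F}_p[X]/(X^p)$ (the introduction's counterexample multiplied by $\mathbb{Z}$), let $A \cong \mathbb{Z}$ be the prime subring, $\sigma = \mathrm{id}$, $q = 1$, and $\delta = (0, \tfrac{d}{dX})$, a well-defined derivation since $pX^{p-1} = 0$. Then $\mathrm{char}(R) = 0$ and all hypotheses hold, but $N = 0 \times (X)$ while $\delta(0,X) = (0,1) \notin N$. Here $p$ lies in the minimal prime $\mathbb{Z} \times (X)$, so $\{n!\}_1 = n!$ is a zero divisor modulo $N$ for $n \geq p$ --- precisely where your argument (and, for that matter, the paper's one-line proof, which only verifies $\{n!\}_q \neq 0$ in $A$) breaks down; an analogous product with $q$ of infinite order in $A$ but finite order modulo a minimal prime defeats Case 1 as well. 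The statement and both arguments are correct under any hypothesis that forces cancellability, e.g.\ if $A$ is a field (so that in Case 2 one has $\mathbb{Q} \subseteq A$ and $\{n!\}_q$ is invertible), which is the setting in which the corollary is actually invoked in the proof of Theorem \ref{C}(a); for your regular-modulo-$N$ route one would have to assume in addition something like $P \cap A = 0$ for every minimal prime $P$.
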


\begin{proof}
Either hypothesis on $q$ implies that $1 + q + \dots + q^m \neq 0$ for all $m\geq 0$, and so $\{n!\}_q \neq 0\in A$ for all $n$.
\end{proof}

\subsection{Calculations in characteristic $p > 0$}

In positive characteristic, the issue is less straightforward: if $R$ is an algebra over some finite field $\mathbb{F}_{p^m}$, and $q\in \mathbb{F}_{p^m}^\times$, then $\{n!\}_q$ will only be nonzero for sufficiently small $n$. We are particularly interested in the case $q = 1$.

We now fix a prime $p$ and turn to the case in which $R$ is an $\mathbb{F}_p$-algebra. In the characteristic $p$ case, it is generally not true that $\delta$ will preserve the prime radical of $R$ (see the counterexample in \cite[Introduction]{bergen-grzeszczuk-radicals}); as will be a recurring theme throughout the paper, we will have to replace $(\sigma,\delta)$ by something else.

\begin{notn}
Given $n\in\mathbb{N}$, we can write $n = a_0 + a_1p + \dots + a_r p^r$, where $0\leq a_i < p$ for all $i$. We will write $[n] = [n]_p$ to denote the element $(a_0, a_1, \dots)$ of $\mathbb{N}^\infty$, where $a_i = 0$ for all $i > r$. We say that $[n]$ and $[m]$ \emph{share no common component} if there is no $i\in\mathbb{N}$ such that the $i$th entry of both $[n]$ and $[m]$ is nonzero.
\end{notn}

\begin{lem}\label{lem: calculation of delta^n with no common components}
Assume $\sigma\delta = \delta\sigma$. Suppose we are given $i,j,k, n\in\mathbb{N}$ such that $[i] + [j] + [k] = [n]$. Then $i+j+k = n$. Furthermore, for any $a, b, x\in R$, there exist $\alpha_{i,j,k}\in \mathbb{F}_p^\times$ such that
\begin{align}\label{eqn: expanding delta^n}
\delta^n(axb) = \sum_{[i] + [j] + [k] = [n]} \alpha_{i,j,k} \delta^i\sigma^{n-i}(a) \delta^j\sigma^{k}(x) \delta^k(b).
\end{align}
\end{lem}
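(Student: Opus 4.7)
The plan is to split the statement into its arithmetic assertion and its multiplicative Leibniz-type assertion, and handle them separately.

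For the arithmetic claim, note first that $[m]\in\mathbb{N}^\infty$ is by definition the base-$p$ digit sequence of $m$, so each component of $[i]$, $[j]$, $[k]$, $[n]$ lies in $\{0,1,\dots,p-1\}$. The componentwise equation $[i]+[j]+[k]=[n]$ then forces each componentwise sum to already lie in $\{0,\dots,p-1\}$, i.e.\ the addition $i+j+k$ (carried out in base $p$) produces no carries. In that carry-free case, $[i]+[j]+[k]$ is itself the base-$p$ expansion of the integer $i+j+k$; since this expansion also equals $[n]$, which is the base-$p$ expansion of $n$, I conclude $i+j+k=n$.

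For the expansion of $\delta^n(axb)$, the strategy is to apply Lemma~\ref{lem: binomial delta^n} twice. First I bracket $axb=(ax)\cdot b$ and write
\begin{equation*}
\delta^n(axb) \;=\; \sum_{k=0}^n \binom{n}{k}\, \delta^k\sigma^{n-k}(ax)\, \delta^{n-k}(b).
\end{equation*}
Because $\sigma$ is a ring homomorphism and $\sigma\delta=\delta\sigma$, the inner term equals $\delta^k\bigl(\sigma^{n-k}(a)\,\sigma^{n-k}(x)\bigr)$, to which Lemma~\ref{lem: binomial delta^n} applies again, giving
\begin{equation*}
\delta^k\!\bigl(\sigma^{n-k}(a)\sigma^{n-k}(x)\bigr) \;=\; \sum_{j=0}^{k} \binom{k}{j}\, \delta^j\sigma^{n-j}(a)\, \delta^{k-j}\sigma^{n-k}(x).
\end{equation*}
Substituting back and reindexing the double sum by setting $i:=j$, $j':=k-j$, $k':=n-k$ (a bijection onto triples with $i+j'+k'=n$), I obtain, after renaming $j',k'$ back to $j,k$,
\begin{equation*}
\delta^n(axb) \;=\; \sum_{\substack{i+j+k=n}} \binom{n}{i,j,k}\, \delta^i\sigma^{n-i}(a)\, \delta^j\sigma^k(x)\, \delta^k(b),
\end{equation*}
using the standard identity $\binom{n}{k}\binom{k}{j}=\binom{n}{i,j',k'}$ for the trinomial coefficient.

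The final step is to invoke Lucas' theorem (equivalently Kummer's): in $\mathbb{F}_p$, the trinomial $\binom{n}{i,j,k}$ is nonzero precisely when the addition $i+j+k=n$ involves no carries in base $p$, which by the arithmetic part above is exactly the condition $[i]+[j]+[k]=[n]$. Since $R$ is an $\mathbb{F}_p$-algebra in this section, all other terms vanish, and on the surviving terms I may define $\alpha_{i,j,k}\in\mathbb{F}_p^\times$ to be the reduction mod $p$ of $\binom{n}{i,j,k}$, yielding the displayed formula. There is no genuine obstacle here; the only care required is the bookkeeping in the two applications of Lemma~\ref{lem: binomial delta^n}, and the observation that $\sigma\delta=\delta\sigma$ is what allows the $\sigma$-exponents to be collected cleanly in the final form.
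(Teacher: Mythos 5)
Your proof is correct, and it takes a genuinely different route from the paper. You expand $\delta^n(axb)$ in one shot by applying Lemma~\ref{lem: binomial delta^n} twice, obtaining the full trinomial formula $\delta^n(axb)=\sum_{i+j+k=n}\binom{n}{i,j,k}\delta^i\sigma^{n-i}(a)\delta^j\sigma^k(x)\delta^k(b)$ over \emph{all} triples, and then discard the terms with base-$p$ carries via Lucas/Kummer, since $R$ is an $\mathbb{F}_p$-algebra in this section; your carry-free argument for $i+j+k=n$ is also fine. The paper instead proceeds by induction on the base-$p$ digits of $n$, writing $n=m+p^ty$ with $y<p$, using that $\delta^{p^t}$ is a $\sigma^{p^t}$-derivation (Remark~\ref{rk: can raise skew derivations to pth powers in char p}) and that the small multinomial coefficients $\binom{y}{u,v,w}$ with $u+v+w=y\leq p-1$ are units mod $p$, so it never needs Lucas' or Kummer's theorem and stays self-contained; the price is that the scalars $\alpha_{i,j,k}$ are only defined recursively. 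Your argument is shorter and has the added benefit of identifying $\alpha_{i,j,k}$ explicitly as the reduction mod $p$ of the trinomial coefficient $\binom{n}{i,j,k}$ (which, via Lucas, agrees with the paper's recursively defined scalars), at the cost of invoking the standard carry criterion for the $p$-adic valuation of multinomial coefficients.
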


\begin{proof}
The first statement is clear, since if $[i]+[j]+[k]=[n]$ then the $p$-adic coefficients of $i, j$ and $k$ sum to the corresponding coefficients of $n$. (The converse to this statement is false, of course.)

For the second statement, let $r_n$ be the index of the final nonzero entry of $[n]$, or $-1$ if $n = 0$. When $r_n = -1$ there is nothing to prove, as $\delta^0(axb) = axb$. So let us take an integer $t\geq 1$ such that the result is known to hold for all $m$ with $r_m < t$, i.e. all $m = a_0 + a_1 p + \dots + a_s p^s$ with $s < t$. We will prove the result for arbitrary $n = m + p^t y$ for some $m,y\in\mathbb{N}$ with $r_m < t$ and $y < p$, i.e. for all $n$ with $r_n < t+1$, and then we will be done by induction.

By the inductive hypothesis,
\begin{align}\label{eqn: expanding delta^m}
\delta^n(axb) = \delta^{p^t y}(\delta^m(axb)) = \delta^{p^t y} \left( \sum_{[i] + [j] + [k] = [m]} \alpha_{i,j,k} \delta^i\sigma^{m-i}(a) \delta^j\sigma^{k}(x)\delta^k(b)\right).
\end{align}
Using the fact that $\delta^{p^t}$ is a $\sigma^{p^t}$-derivation, as in Remark \ref{rk: can raise skew derivations to pth powers in char p}, we may now apply Lemma \ref{lem: binomial delta^n} twice to each term on the right-hand side to get a ``trinomial" expansion. Ignoring the scalars $\alpha_{i,j,k}$ for now, we can calculate that
\begin{align*}
\delta^{p^t y} ( a'x'b' ) = \sum_{u+v+w=y} \binom{y}{u,v,w} \delta^{p^t u} \sigma^{p^t(v+w)}(a') \delta^{p^t v}\sigma^{p^t w}(x') \delta^{p^t w}(b'),
\end{align*}
into which we can substitute $a' = \delta^i \sigma^{m-i}(a)$, $x' = \delta^j \sigma^{k}(x)$ and $b' = \delta^k(b)$ for each $[i]+[j]+[k]=[m]$, to get
\begin{align}\label{eqn: expanding delta^(p^t y)}
&\delta^{p^t y} ( \delta^i\sigma^{m-i}(a) \delta^j\sigma^{k}(x)\delta^k(b))\nonumber\\& = \sum_{u+v+w=y} \binom{y}{u,v,w} \delta^{i+p^t u} \sigma^{m - i + p^t(v+w)}(a) \delta^{j+p^t v}\sigma^{k + p^t w}(x) \delta^{k+p^t w}(b).
\end{align}
Upon substituting equation (\ref{eqn: expanding delta^(p^t y)}) into equation (\ref{eqn: expanding delta^m}), we get a sum over the indexing set
$$S = \{(i,j,k,u,v,w) : [i]+[j]+[k] = [m] \text{ and } u+v+w = y\}.$$
However, what we want, as in equation (\ref{eqn: expanding delta^n}), is a sum over the indexing set 
$$T = \{(i',j',k') : [i']+[j']+[k'] = [n]\}.$$
It is easy to see that setting $i' = i + p^t u$, $j' = j + p^t v$ and $k' = k + p^t w$ gives an explicit bijection $S\to T$, and that this gives $n - i' = m - i + p^t(v+w)$. Setting $\displaystyle \beta_{i',j',k'} = \alpha_{i,j,k} \binom{y}{u,v,w}\in\mathbb{F}_p^\times$ and making all of these substitutions in equations (\ref{eqn: expanding delta^m}--\ref{eqn: expanding delta^(p^t y)}), we get
$$\delta^n(axb) = \sum_{[i'] + [j'] + [k'] = [n]} \beta_{i',j',k'} \delta^{i'}\sigma^{n-{i'}}(a) \delta^{j'}\sigma^{k'}(x)\delta^{k'}(b),$$
as required.
\end{proof}

\begin{cor}\label{cor: calculating delta^(r+s) for minimal s}
Assume $\sigma\delta = \delta\sigma$. Let $I$ be an ideal, $a, b\in I$, and $x\in R$. Suppose also that there are $r, s$ such that $\delta^r(a)\not\in I, \delta^s(b)\not\in I$, and take $r$ and $s$ to be the minimal such integers. If $[r]$ and $[s]$ share no common component, then there exists some $\alpha\in\mathbb{F}_p^\times$ such that $\delta^{r+s}(axb) \equiv \alpha \delta^r\sigma^{s}(a) \sigma^{s}(x)\delta^{s}(b) \;\; (\text{mod } I)$.
\end{cor}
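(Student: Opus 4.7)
The plan is to apply Lemma \ref{lem: calculation of delta^n with no common components} with $n = r+s$ and then show that essentially every term in the resulting expansion lies in $I$, except for the one corresponding to $(i,j,k) = (r, 0, s)$. Since $[r]$ and $[s]$ share no common component, we have $[r+s] = [r] + [s]$; this is what makes $(r,0,s)$ a valid index of summation in the first place.

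First I would write out
\begin{equation*}
\delta^{r+s}(axb) = \sum_{[i]+[j]+[k]=[r+s]} \alpha_{i,j,k}\, \delta^i\sigma^{r+s-i}(a)\, \delta^j\sigma^k(x)\, \delta^k(b).
\end{equation*}
Next I would argue that if $i < r$, then by the minimality hypothesis $\delta^i(a) \in I$; using $\sigma\delta = \delta\sigma$, this gives $\delta^i\sigma^{r+s-i}(a) = \sigma^{r+s-i}\delta^i(a) \in \sigma^{r+s-i}(I)$, and since in the context of the paper $I$ is a $\sigma$-invariant ideal (the corollary is used for $\sigma$-ideals $I$, where $\sigma^n(I) = I$ for all $n\in\mathbb{Z}$), this element lies in $I$, placing the whole term in $I$. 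Symmetrically, if $k < s$ then $\delta^k(b) \in I$ by minimality of $s$, putting that term in $I$ as well.

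It remains to check which triples $(i,j,k)$ with $[i]+[j]+[k] = [r+s]$ satisfy both $i \geq r$ and $k \geq s$. By Lemma \ref{lem: calculation of delta^n with no common components}, any such triple automatically satisfies $i + j + k = r + s$. Combined with $i \geq r$ and $k \geq s$, this forces $j = 0$ and $i + k = r + s$, so the non-negative integers $i - r$ and $k - s$ sum to zero, giving $i = r$, $k = s$. Setting $\alpha := \alpha_{r, 0, s} \in \mathbb{F}_p^\times$ gives the claimed congruence.

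The main obstacle is really the combinatorial one already hidden in Lemma \ref{lem: calculation of delta^n with no common components}: the fact that the $p$-adic-digit condition $[i] + [j] + [k] = [n]$ forces $i + j + k = n$ (equivalently, no carries in the $p$-adic addition). Once this is in hand, the rest is a short counting argument together with the compatibility $\sigma\delta = \delta\sigma$; the $\sigma$-invariance of $I$ used to move $\sigma^{r+s-i}$ past $I$ is the only hypothesis on $I$ beyond being a two-sided ideal, and is automatic in every application of this corollary.
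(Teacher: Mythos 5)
Your proof is correct and takes essentially the same route as the paper: expand $\delta^{r+s}(axb)$ via Lemma \ref{lem: calculation of delta^n with no common components}, discard modulo $I$ every term with $i<r$ or $k<s$, and use the no-common-component hypothesis to see that $(r,0,s)$ is the unique surviving index, with coefficient $\alpha_{r,0,s}\in\mathbb{F}_p^\times$. Your explicit appeal to the $\sigma$-invariance of $I$ in the case $i<r$ (writing $\delta^i\sigma^{r+s-i}(a)=\sigma^{r+s-i}\delta^i(a)\in\sigma^{r+s-i}(I)\subseteq I$) is a hypothesis the paper's statement leaves implicit but which, as you note, holds in every application of the corollary.
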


\begin{proof}
Lemma \ref{lem: calculation of delta^n with no common components} implies that $\delta^{r+s}(axb)$ is a linear combination of elements of the form $\delta^i\sigma^{r+s-i}(a) \delta^j\sigma^{k}(x) \delta^k(b)$, where $i+j+k = r+s$. But if we have either $i < r$ or $k < s$, then $\delta^i\sigma^{r+s-i}(a) \delta^j\sigma^{k}(x) \delta^k(b)\in I$, so after reducing modulo $I$, the only term of interest is the term corresponding to $i = r$, $j = 0$ and $k = s$. As $[r]$ and $[s]$ share no common component, we have that $[i]+[j]+[k] = [r+s]$ for this term, and so in the notation of Lemma \ref{lem: calculation of delta^n with no common components} we have $\alpha = \alpha_{r,0,s} \neq 0$.
\end{proof}

\subsection{The $\delta$-core of an ideal in characteristic $p > 0$}

\begin{notn}
Suppose $I$ is any $\sigma$-ideal. $I$ will in general not be preserved by $\delta$, and to account for this, we would like to define $$\delta\text{-core}(I) = \{a\in I : \delta^n(a)\in I \text{ for all } n\geq 0\}.$$ It can be checked from the definition that this is the largest $(\sigma,\delta)$-ideal contained within $I$.

However, $\delta\text{-core}(I)$ will usually be too small for our purposes, so we also make the following definitions. As $R$ has characteristic $p$, we may take $(\sigma^{p^m},\delta^{p^m})$ to be our skew derivation of interest rather than $(\sigma,\delta)$. $I$ is still a $\sigma^{p^m}$-ideal, and we may then define $\delta^{p^m}\text{-core}(I)$, the largest $(\sigma^{p^m},\delta^{p^m})$-ideal contained within $I$, for all $m\geq 0$.

This is an ascending sequence, in the sense that $\delta^{p^m}\text{-core}(I) \subseteq \delta^{p^{m+1}}\text{-core}(I)$ for all $m\geq 0$: so, finally, we will denote the directed limit by $$\delta^{p^\infty}\text{-core}(I) = \bigcup_{m\in\mathbb{N}} \delta^{p^m}\text{-core}(I).$$
\end{notn}

\begin{lem}\label{lem: raise delta to pth power to preserve a sigma-ideal}
Fix a $\sigma$-ideal $I$ of a Noetherian ring $R$ of characteristic $p$. Then there exists $M\geq 0$ such that $\delta^{p^M}\text{-core}(I) = \delta^{p^\infty}\text{-core}(I)$.
\end{lem}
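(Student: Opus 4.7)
The plan is to apply Noetherianity. The paper has already observed that the $\delta^{p^m}$-cores form an ascending chain; the key point I want to add is that each $\delta^{p^m}\text{-core}(I)$ is a two-sided ideal of $R$, after which the Noetherian hypothesis on $R$ forces the chain to stabilise, and the stabilisation point gives the desired $M$.

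Let me write $C_m := \delta^{p^m}\text{-core}(I)$ for brevity. By Remark \ref{rk: can raise skew derivations to pth powers in char p}(i), since $\mathrm{char}(R) = p$, the pair $(\sigma^{p^m}, \delta^{p^m})$ is itself a skew derivation on $R$, and of course it commutes in the same sense as $(\sigma,\delta)$. To check that $C_m$ is a left ideal, take $a \in C_m$ and $r \in R$. I apply Lemma \ref{lem: binomial delta^n} to the skew derivation $(\sigma^{p^m},\delta^{p^m})$: for any $n\geq 0$,
\begin{equation*}
(\delta^{p^m})^n(ra) = \sum_{k=0}^{n}\binom{n}{k}(\delta^{p^m})^{k}\sigma^{p^m(n-k)}(r)\cdot (\delta^{p^m})^{n-k}(a).
\end{equation*}
Each factor $(\delta^{p^m})^{n-k}(a)$ lies in $I$ because $a \in C_m$, so every summand lies in $R \cdot I = I$. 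Hence $ra \in C_m$. The symmetric argument (using $\delta$ as a left $\sigma$-derivation) shows $C_m$ is also a right ideal. Thus each $C_m$ is a two-sided ideal of $R$.

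The ascending property is essentially tautological: if $a \in C_m$, then for every $n\geq 0$ we have $(\delta^{p^{m+1}})^n(a) = (\delta^{p^m})^{pn}(a) \in I$, so $a \in C_{m+1}$. Therefore $C_0 \subseteq C_1 \subseteq C_2 \subseteq \cdots$ is an ascending chain of two-sided ideals in the Noetherian ring $R$. It must stabilise at some index $M$, and then
\begin{equation*}
\delta^{p^M}\text{-core}(I) = C_M = \bigcup_{m\geq 0} C_m = \delta^{p^\infty}\text{-core}(I),
\end{equation*}
as required.

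I do not foresee a major obstacle here; the only point worth a moment of care is the Leibniz expansion showing $C_m$ is closed under left (and right) multiplication by $R$, which is precisely what Lemma \ref{lem: binomial delta^n} is designed to handle once we have moved to the $p^m$-th iterates.
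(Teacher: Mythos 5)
Your proof is correct and follows essentially the same route as the paper: the paper's one-line argument is exactly that the ascending chain of cores (already noted to be ascending in the preceding Notation) consists of ideals in a Noetherian ring and therefore stabilises. The extra verification you supply is harmless but not strictly needed, since the paper defines $\delta^{p^m}\text{-core}(I)$ as the \emph{largest} $(\sigma^{p^m},\delta^{p^m})$-ideal contained in $I$, so each term is an ideal by construction (and note that your ``symmetric argument'' for the right-ideal property silently uses $\sigma\delta=\delta\sigma$ and $\sigma(I)\subseteq I$ to place the terms $\delta^{p^m k}\sigma^{p^m(n-k)}(a)$ in $I$).
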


\begin{proof}
The sequence of ideals $\delta\text{-core}(I) \subseteq \delta^p\text{-core}(I) \subseteq \delta^{p^2}\text{-core}(I)\subseteq \dots$ stabilises, as $R$ is Noetherian.
\end{proof}

We look now at the special case where $I$ is $\sigma$-prime and $M = 0$ in the above lemma.

\begin{propn}\label{propn: delta-cores are sigma-prime}
Suppose that $I$ is a $\sigma$-prime ideal with the property that $\delta\text{-core}(I) = \delta^{p^\infty}\text{-core}(I)$. Then $\delta\text{-core}(I)$ is a $\sigma$-prime ideal.
\end{propn}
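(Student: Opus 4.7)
Let $J = \delta\text{-core}(I)$. The plan is to verify the characterisation of $\sigma$-primeness from Remarks \ref{rks: sigma-prime alternative characterisations}(ii): take $a, b \in R$ with $\sigma^n(a) R b \subseteq J$ for every $n \in \mathbb{Z}$, and show $a \in J$ or $b \in J$. I argue by contradiction, assuming $a, b \notin J$. First I reduce to the case $a, b \in I$: supposing for instance that $a \notin I$, let $s$ be minimal with $\delta^s(b) \notin I$ (which exists since $b \notin J$); since $J$ is $\delta$-invariant, $\delta^s(\sigma^n(a) y b) \in J \subseteq I$ for every $y \in R$. Expanding via Lemma \ref{lem: calculation of delta^n with no common components}, every term whose $b$-index is strictly below $s$ vanishes modulo $I$, leaving only $\alpha\,\sigma^{n+s}(a)\,\sigma^s(y)\,\delta^s(b)$ for some $\alpha \in \mathbb{F}_p^\times$. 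Varying $n, y$ yields $\sigma^m(a) R \delta^s(b) \subseteq I$ for all $m$, and the $\sigma$-primeness of $I$ forces $a \in I$ or $\delta^s(b) \in I$, both contradicting our assumptions. Hence $a \in I$, and an analogous argument (using instead the minimal $r$ with $\delta^r(a) \notin I$, applied to $\delta^r(\sigma^n(a) y b) \in J \subseteq I$) gives $b \in I$.

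Now let $r, s \geq 1$ be minimal with $\delta^r(a), \delta^s(b) \notin I$. In the favourable case, when $[r]$ and $[s]$ share no common component in base $p$, Corollary \ref{cor: calculating delta^(r+s) for minimal s} applies directly to $\delta^{r+s}(\sigma^n(a) y b) \in J \subseteq I$ and reduces the expansion modulo $I$ to the single surviving term $\alpha\,\sigma^{n+s}(\delta^r a)\,\sigma^s(y)\,\delta^s(b)$. Letting $n$ and $y$ vary yields $\sigma^m(\delta^r a) R \delta^s(b) \subseteq I$ for all $m$, and the $\sigma$-primeness of $I$ forces $\delta^r(a) \in I$ or $\delta^s(b) \in I$, contradicting minimality in either case.

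It remains to handle the case when $[r]$ and $[s]$ share a common component, and this is precisely where the stability hypothesis $\delta\text{-core}(I) = \delta^{p^\infty}\text{-core}(I)$ is essential: it gives $J = \delta^{p^m}\text{-core}(I)$ for every $m \geq 0$, so $a, b$ lie outside $\delta^{p^m}\text{-core}(I)$ at every level. There are thus minimal $\tilde r_m, \tilde s_m \geq 1$ with $\delta^{\tilde r_m p^m}(a), \delta^{\tilde s_m p^m}(b) \notin I$, and these are minimal for the commuting skew derivation $(\sigma^{p^m}, \delta^{p^m})$ (cf.\ Remark \ref{rk: can raise skew derivations to pth powers in char p}). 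The goal is to choose $m$ so that $[\tilde r_m]_p$ and $[\tilde s_m]_p$ share no common component in base $p$, after which one re-runs the argument of the previous paragraph with $(\sigma^{p^m}, \delta^{p^m})$ in place of $(\sigma, \delta)$. The main obstacle lies exactly here, in exhibiting such an $m$: in degenerate configurations — for instance when $a = b$, where $\tilde r_m = \tilde s_m$ at every level — a direct choice of $m$ will not suffice, and I would instead pass to the prime decomposition $I = \bigcap_{i=0}^{k-1} \sigma^i(P)$ afforded by Remarks \ref{rks: sigma-prime alternative characterisations}(i), show that the largest $\delta$-stable ideal $\widetilde{J}$ of $P$ is prime under the hypothesis, and then conclude that $J = \bigcap_{i=0}^{k-1} \sigma^i(\widetilde{J})$ is the intersection of a single $\sigma$-orbit of primes, hence $\sigma$-prime by Remarks \ref{rks: sigma-prime alternative characterisations}(i).
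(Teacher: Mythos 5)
Your reduction to $a,b\in I$ and your treatment of the favourable case (where the genuinely minimal exponents $r,s$ have disjoint base-$p$ supports) are correct and match the computations in the paper. But the remaining case is not a residual technicality: it is the whole content of the proposition, and your proposal does not close it. Your main plan --- replace $(\sigma,\delta)$ by $(\sigma^{p^m},\delta^{p^m})$ and find a level $m$ at which the new minimal exponents $\tilde r_m,\tilde s_m$ have disjoint supports --- provably cannot work in general, as you yourself observe (take $a=b$, so $\tilde r_m=\tilde s_m$ for every $m$). The fallback you sketch (pass to a prime $P$ with $I=\bigcap_n\sigma^n(P)$ and ``show that the largest $\delta$-stable ideal $\widetilde J$ of $P$ is prime'') is an unproven claim at least as hard as the proposition itself; note moreover that the hypothesis $\delta\text{-core}(I)=\delta^{p^\infty}\text{-core}(I)$ concerns $I$, not $P$, so it does not transfer to $\widetilde J$ in any evident way. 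So there is a genuine gap exactly at the point where the stability hypothesis has to do its work.

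The paper's proof avoids the difficulty by never requiring the two genuinely minimal exponents to be compatible. Having shown $a,b\in I$, fix only $s$, minimal with $\delta^s(b)\notin I$, and prove the stronger statement that $\delta^{r'}(a)\in I$ for \emph{every} $r'$ whose base-$p$ expansion shares no component with $[s]$: if not, take $r$ minimal among such $r'$ with $\delta^{r}(a)\notin I$ and run the computation of Corollary \ref{cor: calculating delta^(r+s) for minimal s} on $\delta^{r+s}(\sigma^n(a)xb)$, which lies in $I$ because $\sigma^n(a)xb\in J$ and $J$ is $\delta$-stable; modulo $I$ only the term $\alpha\,\delta^{r}\sigma^{s+n}(a)\,\sigma^{s}(x)\,\delta^{s}(b)$ survives, and since $n$ and $x$ are arbitrary, Remark \ref{rks: sigma-prime alternative characterisations}(ii) forces $\delta^{r}(a)\in I$, a contradiction. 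Now choose $N$ with $p^N>s$: every multiple $lp^N$ has base-$p$ support disjoint from $[s]$, so $\delta^{lp^N}(a)\in I$ for all $l\geq 0$, i.e. $a\in\delta^{p^N}\text{-core}(I)\subseteq\delta^{p^\infty}\text{-core}(I)=\delta\text{-core}(I)=J$, contradicting $a\notin J$. This is how the hypothesis $\delta\text{-core}(I)=\delta^{p^\infty}\text{-core}(I)$ is actually used: not to manufacture minimal exponents at higher $p$-power levels, but to convert ``$\delta^{r'}(a)\in I$ for all $r'$ supported away from $[s]$'' into membership of $J$. Incorporating this idea would let you dispense with both your case split and your fallback.
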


\begin{proof}
Write $J = \delta\text{-core}(I)$. Suppose, for contradiction, that $J$ is not $\sigma$-prime: then there exist elements $a,b\in R\setminus J$ such that $\sigma^{n}(a)Rb \subseteq J$ for all $n\in\mathbb{Z}$. As $J\subseteq I$, and $I$ is $\sigma$-prime, we must have either $a\in I$ or $b\in I$ by Remark \ref{rks: sigma-prime alternative characterisations}(ii). We will treat the case $b\in I$: the case $a\in I$ is similar.

As $b\in I\setminus J$, there exists some minimal $s\in\mathbb{N}$ such that $\delta^{s}(b) \not\in I$: fix this $s$. Choose also arbitrary $n\in\mathbb{Z}$ and $x\in R$. By Lemma \ref{lem: binomial delta^n}, we have $\delta^{s}(\sigma^{n}(a)xb) \equiv \sigma^{n}(a) x\delta^{s}(b)$ (mod $I$). But as $\sigma^{n}(a)Rb \subseteq J$, by definition of $J$ we have $\delta^{s}(\sigma^{n}(a)Rb)\subseteq I$, implying that $\sigma^{n}(a) R\delta^{s}(b)\subseteq I$. But as $\delta^{s}(b)\not\in I$, and $n$ was arbitrary, we must have $a\in I$.

To obtain the necessary contradiction, we will show that in fact $a\in J$. To do this, it will suffice to show that there exists some $N\geq 0$ such that $a\in \delta^{p^{N}}\text{-core}(I)$, i.e. that $\delta^{lp^N}(a)\in I$ for all $l\geq 0$. To do this, it will be enough to show that $\delta^{r'}(a) \in I$ for all $r'$ such that $[r']$ and $[s]$ have no common component: then, choosing $N$ so that $p^N > s$, it will follow that $[lp^N]$ and $[s]$ will have no common component for all $l\geq 0$. This is how we proceed.

Suppose, for contradiction, that this is not true: let $r\geq 0$ be minimal such that $[r]$ and $[s]$ have no common component but $\delta^{r}(a) \not\in I$. We may now apply Corollary \ref{cor: calculating delta^(r+s) for minimal s} to see that, for arbitrary $x\in R$ and $n\in\mathbb{Z}$,

$$\delta^{r+s}(\sigma^{n}(a)xb) \equiv \alpha \delta^r\sigma^{s + n}(a) \sigma^s(x)\delta^s(b) \;\; (\text{mod } I)$$

for some nonzero $\alpha$. Hence, as $\sigma^{n}(a)xb\in J$, we have $\delta^{r}\sigma^{s + n}(a) \sigma^{s}(x)\delta^{s}(b)\in I$.

But $x\in R$ is arbitrary here, and $\sigma$ is surjective, so this implies that $\delta^{r}\sigma^{s + n}(a) R\delta^{s}(b)\subseteq I$. Finally, rewriting this as $\sigma^n(\delta^r \sigma^s(a)) R \delta^s(b) \subseteq I$ and noting once more that $n$ was arbitrary, we can deduce from Remark \ref{rks: sigma-prime alternative characterisations}(ii) (as $\delta^s(b) \not\in I$) that we must have $\delta^r\sigma^s(a)\in I$, and hence $\delta^r(a)\in I$. This is a contradiction.
\end{proof}

\textit{Proof of Theorem \ref{C}.}

\begin{enumerate}[label=(\alph*)]
\item As $I$ is a $\sigma$-prime ideal, Remark \ref{rks: sigma-prime alternative characterisations}(i) tells us that it is an intersection of prime ideals of $R$, and hence it contains the (prime) radical $N$ of $R$. From Corollary \ref{cor: prime radical preserved in char 0}, we already know that $\delta(N) \subseteq N$. As $N$ is a $(\sigma,\delta)$-ideal, we may define the induced skew derivation $(\overline{\sigma},\overline{\delta})$ on $\overline{R} := R/N$: now $\delta(I) \subseteq I$ if and only if $\overline{\delta}(\overline{I}) \subseteq \overline{I}$. So, passing to the quotient if necessary, it will suffice to assume henceforth that $N = 0$, i.e. $R$ is a \emph{semisimple} artinian ring.

By the standard theory of semisimple artinian rings \cite[Exercise 1.1.7, Theorem 1.3.5ff.]{lam-first-course}, all prime ideals of $R$ are minimal (and maximal), and there is a finite set of centrally primitive idempotents $\{e_P : P\in\Spec(R)\}$ whose sum is $1_R$, with the properties that $e_P R \cong R/P$ and $(1-e_P)R \cong P$ as $R$-bimodules, each $e_P R$ is a simple artinian ring, and $\displaystyle R = \bigoplus_{P\in\Spec(R)} e_P R$ as a sum of two-sided ideals.

As $I$ is a minimal $\sigma$-prime of $R$, there exists a $\sigma$-orbit $\mathcal{X} = \{P_1, \dots, P_t\}$ of distinct minimal primes of $R$ such that $I = P_1 \cap \dots \cap P_t$. Write $\mathcal{X}' = \{P_{t+1}, \dots, P_s\} = \Spec(R) \setminus \mathcal{X}$, and denote by $e_i$ the central idempotent associated to $P_i$: this means that $I = e_{t+1}R \oplus \dots \oplus e_s R$. But as $\mathcal{X}'$ is a union of $\sigma$-orbits, $\sigma$ permutes the simple rings $e_{t+1}R, \dots,$ $e_s R$. Now it follows from \cite[Lemma 1.2]{cauchon-robson} that $\delta(I) \subseteq I$.

\item Firstly, fix one of the finitely many minimal primes $P$ above $I$ as in Remark \ref{rks: sigma-prime alternative characterisations}(i), so that $I = \bigcap_{n\in\mathbb{Z}} \sigma^n(P)$. Setting $I_1 = I$ and $M_0 = 0$, we see that we have a $\sigma^{p^{M_0}}$-prime ideal $I_1$.

We now construct a sequence of ideals by induction. Suppose that we have defined a $\sigma^{p^{M_{j-1}}}$-prime ideal $I_j$. Then, by Lemma \ref{lem: raise delta to pth power to preserve a sigma-ideal}, we choose the minimal $M_j \geq M_{j-1}$ such that $\delta^{p^{M_j}}\text{-core}(I_j)$ is equal to $\delta^{p^\infty}\text{-core}(I_j)$; and, by definition, the ideal $I_{j+1} = \bigcap_{n\in\mathbb{Z}} \sigma^{np^{M_j}}(P)$ is a $\sigma^{p^{M_j}}$-prime ideal.

By definition, we have $I_1 \subseteq I_2 \subseteq I_3 \subseteq \dots \subseteq P$. But as the $\sigma$-orbit of $P$ is finite by Remark \ref{rks: sigma-prime alternative characterisations}(i), and each $I_j$ is defined to be the intersection of some prime ideals in the $\sigma$-orbit of $P$, this sequence must stabilise, say at $I_k = I_{k+1} = \cdots$. Let $J = I_k$ and $M = M_k$: then $J$ is a $\sigma^{p^M}$-prime ideal satisfying $\delta^{p^M}\text{-core}(J) = \delta^{p^\infty}\text{-core}(J)$.

\begin{enumerate}[label=(\roman*)]
\item As $J$ is the intersection of the $\sigma^{p^M}$-orbit of a \emph{minimal} prime ideal, it is a \emph{minimal} $\sigma^{p^M}$-prime ideal.
\item We know already that $I = \bigcap_{n\in\mathbb{Z}} \sigma^n(P)$, and this intersection is finite. But $I\subseteq J\subseteq P$ by construction, so the same is true upon replacing $P$ by $J$.
\item We may apply Proposition \ref{propn: delta-cores are sigma-prime}, to see that $\delta^{p^M}\text{-core}(J)$ is a $\sigma^{p^M}$-prime ideal; but $\delta^{p^M}\text{-core}(J)\subseteq J$, so by (i), they must be equal.\qed
\end{enumerate}
\end{enumerate}

\section{Constructing a filtration}\label{section: thm B}

Throughout this section, $R$ will denote a \emph{prime} algebra over $\mathbb{Z}_p$, and $w: R\to \mathbb{Z}\cup\{\infty\}$ a complete, separated Zariskian filtration such that $w(p) \geq 1$ and $w(p^n)=nw(p)$ for all $n\in\mathbb{N}$. We will suppose also that $\gr_w (R)$ is commutative (and Noetherian), that the non-zero part $(\gr_w(R))_{\neq 0}$ contains a non-nilpotent element, and that $(\sigma,\delta)$ is compatible with $w$.

\textbf{Note:} If $p\neq 0$ in $R$ then the condition that $w(p^n)=nw(p)\geq n$ for all $n$ implies that gr$(p)$ is non-nilpotent of positive degree, and hence $\gr_w(R)_{\neq 0}$ contains a non-nilpotent element. So this latter condition only needs to be stated in the case where char$(R)=p$.

In this section, we give a proof of Theorem \ref{B}. Our notation broadly follows that of \cite{jones-abelian-by-procyclic}.

\subsection{Localisation and completion}\label{subsec: constructing w'}

Fix a minimal prime ideal $\mathfrak{q}\lhd \gr_w(R)$ not containing $(\gr_w(R))_{\neq 0}$, which we know exists since this set contains a non-nilpotent element. Moreover, if $p\neq 0$ in $R$, we can assume that gr$(p)$ does not lie in $\mathfrak{q}$. Then $\mathfrak{q}$ is a graded ideal of $\gr_w (R)$, so let $T$ be the set of homogeneous elements in $(\gr_w(R))\setminus \mathfrak{q}$, and let
$$S = \{r\in R \;:\; \gr_w(r) \in T\}$$
be its \emph{saturated lift} in $R$ (in the sense of \cite{li-ore-sets}). Then $S^{-1}R = Q(R)$ by \cite[Lemma 3.3]{ardakovInv}.

Using Proposition \ref{propn: Rees and associated graded of microlocalisation}, we can construct a Zariskian filtration $w' = w'_{\mathfrak{q}}$ on $Q(R)$, which is known to satisfy the following conditions by the results of \cite{li-ore-sets} and \cite{ardakovInv}.

\begin{props}\label{props: the filtration w'}
$ $

\begin{enumerate}[label=(\roman*),noitemsep]
\item We have $w'(x) = \max\{w(r) - w(s): \exists s\in S, r\in R \text{ such that } x = s^{-1}r\}$ for all $x\in Q(R)$.
\item For all $s\in S$ and $r\in R$, we have $w'(s^{-1}r) = w'(r) - w(s)$.
\item By construction, $w'(r) \geq w(r)$ for all $r\in R$, with equality if $r\in S$. In particular, $w'(p)\geq 1$.
\item $\gr_{w'}(Q(R)) \cong T^{-1}\gr_w(R)$.
\item The completion $Q'$ of $Q(R)$ with respect to $w'$ is artinian, as in \cite[\S 3.4]{ardakovInv}.
\end{enumerate}
\end{props}

We will be interested in this completed ring $Q'$, with the natural filtration induced from $w'$ (which we continue to denote by $w'$).

Recall from Theorem \ref{thm: compatible extension} that the natural extension of $(\sigma,\delta)$ to $Q(R)$ is compatible with $w'$, and hence $(\sigma,\delta)$ extends to a compatible skew derivation of the completion $Q'$ by Lemma \ref{lem: compatible skew derivations can be induced up to completion}. We will continue to denote this extension by $(\sigma,\delta)$.

Now set $E = T^{-1}\gr_w(R)$ and $\mathfrak{q}' = T^{-1} \mathfrak{q}$, and let $U = \{x\in Q': w'(x)\geq 0\}$ be the positive part of $Q'$. Write $\{F_n Q'\}_{n\in\mathbb{Z}}$ for the sequence of level sets associated to the filtration $w'$ on $Q'$.

With this notation, it is known that there exists a regular normal element $z\in J(U)$ satisfying the following:
\begin{props}\label{props: the element z}
$ $

\begin{enumerate}[label=(\roman*),noitemsep]
\item \cite[Proposition 3.4]{ardakovInv} $U$ is Noetherian and $z$-adically complete,
\item \cite[\S 3.14, Proof of Theorem C(a)]{ardakovInv} $z^nU = F_{nw'(z)}Q'$,
\item \cite[\S 3.14, Proof of Theorem C(a)]{ardakovInv} $w'(z)$ is the minimal positive degree of an element in $E/\mathfrak{q}'$. Hence we can assume that $w'(z)\leq w'(p)$, so $p\in zU$.
\end{enumerate}
\end{props}

We will now pass to the $z$-adic filtration with respect to $U$ defined on $Q'$: that is, the filtration $v_{z,U}$ defined by $v_{z,U}(x) = r$ if and only if $x\in z^r U\setminus z^{r+1} U$ for all $x\in Q'\setminus \{0\}$.

\begin{propn}\label{propn: the filtration v_{z,U}}
$ $

\begin{enumerate}[label=(\roman*)]
\item If $w'(z) = 1$, then $(\sigma,\delta)$ is compatible with $v_{z,U}$.
\item If $\mathrm{char}(Q') = p$, then $(\sigma^{p^m}, \delta^{p^m})$ is compatible with $v_{z,U}$ for any $m$ such that $p^m \geq w'(z)$.
\item If $\delta = \sigma - \mathrm{id}$, then $(\sigma^{p^m}, \sigma^{p^m}-\mathrm{id})$ is compatible with $v_{z,U}$ for any $m$ such that $m \geq w'(z)$.
\end{enumerate}
\end{propn}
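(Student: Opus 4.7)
The plan is to use the key relation between the filtrations $w'$ and $v_{z,U}$ coming from Properties \ref{props: the element z}(ii), which translates compatibility statements in one to compatibility statements in the other. Specifically, $z^n U = F_{nw'(z)} Q'$ means that $v_{z,U}(x) \geq n$ if and only if $w'(x) \geq n w'(z)$. Consequently, for any filtered endomorphism $d$ of $Q'$, if $w'(d(x)) \geq w'(x) + w'(z)$ for all $x$, then $\deg_{v_{z,U}}(d) \geq 1$. So it suffices to bound $\deg_{w'}$ of $\sigma^{p^m} - \mathrm{id}$ and $\delta^{p^m}$ (or, for (i), of $\sigma - \mathrm{id}$ and $\delta$) from below by $w'(z)$.

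Part (i) is then essentially immediate: if $w'(z) = 1$ then the filtrations $v_{z,U}$ and $w'$ coincide on $Q'$, and we already have compatibility of $(\sigma,\delta)$ with $w'$ from Theorem \ref{thm: compatible extension} followed by Lemma \ref{lem: compatible skew derivations can be induced up to completion}.

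For (ii), I would work in characteristic $p$ and expand
\begin{equation*}
\sigma^{p^m} - \mathrm{id} = ((\sigma-\mathrm{id}) + \mathrm{id})^{p^m} - \mathrm{id} = \sum_{i=1}^{p^m} \binom{p^m}{i} (\sigma - \mathrm{id})^i,
\end{equation*}
and since the binomial coefficients $\binom{p^m}{i}$ for $1 \leq i \leq p^m - 1$ vanish in characteristic $p$, this collapses to $(\sigma - \mathrm{id})^{p^m}$. Combined with $\deg_{w'}(\sigma - \mathrm{id}) \geq 1$ this gives $\deg_{w'}(\sigma^{p^m} - \mathrm{id}) \geq p^m$, and analogously $\deg_{w'}(\delta^{p^m}) \geq p^m$ directly from $\deg_{w'}(\delta) \geq 1$. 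The hypothesis $p^m \geq w'(z)$ then yields the required bound via the translation in the first paragraph.

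For (iii), I would apply Lemma \ref{lem: raising to pth powers in the char 0 case} directly to $(Q', w')$, which is a filtered $\mathbb{Z}_p$-algebra with $w'(p) \geq 1$ by Properties \ref{props: the filtration w'}(iii) and with $\deg_{w'}(\sigma - \mathrm{id}) \geq 1$. That lemma gives $\deg_{w'}(\sigma^{p^m} - \mathrm{id}) \geq m$, and since $\delta^{p^m}$ equals $\sigma^{p^m} - \mathrm{id}$ in this case, the same bound governs both operators. The assumption $m \geq w'(z)$ then completes the argument via the translation. There is no serious obstacle here; the whole proposition is a bookkeeping exercise comparing $w'$ and $v_{z,U}$, the only mild subtlety being the distinction in (ii) between a bound of $p^m$ and in (iii) between a bound of $m$, which is exactly why the two cases require different thresholds on $m$.
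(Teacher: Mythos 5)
Your proposal is correct and follows essentially the same route as the paper: part (i) via the coincidence of level sets when $w'(z)=1$, part (ii) via the identity $\sigma^{p^m}-\mathrm{id}=(\sigma-\mathrm{id})^{p^m}$ in characteristic $p$ together with $z^nU = F_{nw'(z)}Q'$, and part (iii) via Lemma \ref{lem: raising to pth powers in the char 0 case} applied to $(Q',w')$. The translation between $\deg_{w'}$ bounds and $v_{z,U}$-compatibility that you set up at the start is exactly the mechanism the paper uses.
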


\begin{proof}
The sequence of level sets associated to $v_{z,U}$ is $\{z^n U\}_{n\in\mathbb{Z}}$.

\begin{enumerate}[label=(\roman*)]
\item Suppose $w'(z) = 1$, and write $d$ for either $\sigma - \mathrm{id}$ or $\delta$. Then, as $w'$ is known to be compatible with $(\sigma,\delta)$, we see that $d(F_nQ') \subseteq F_{n+1}Q'$ for all $n$. Now Properties \ref{props: the element z}(ii) and (iii) imply that $z^n U = F_n Q'$, $z^{n+1} U = F_{n+1} Q'$, from which it follows trivially that $d(z^n U) \subseteq z^{n+1} U$ as required.
\item Choose $m$ such that $p^m \geq w'(z)$, and write $d$ for either $\sigma^{p^m} - \mathrm{id} = (\sigma - \mathrm{id})^{p^m}$ or $\delta^{p^m}$. Then we have $d(F_k Q') \subseteq F_{k + p^m} Q' \subseteq F_{k + w'(z)} Q'$ for all $k$, and Property \ref{props: the element z}(ii) implies that $z^n U = F_{nw'(z)} Q'$, $z^{n+1} U = F_{(n+1)w'(z)} Q'$, from which it again follows that $d(z^n U) \subseteq z^{n+1} U$.
\item Since $\deg_{w'}(\sigma-\mathrm{id})\geq 1$ and $w'(p)\geq 1$, it follows immediately from Lemma \ref{lem: raising to pth powers in the char 0 case} that $\deg_{w'}(\sigma^{p^m}-\id)\geq m\geq w'(z)$, and hence $\deg_{v_{z,U}}(\sigma^{p^m}-\id)\geq 1$ as required.\qedhere
\end{enumerate}
\end{proof}

\subsection{Passing to a simple quotient}\label{subsec: construction of O}

As in the previous subsection, $Q'$ is an artinian algebra over a field, $w'$ a Zariskian filtration on $Q'$, $U$ the $w'$-positive part of $Q'$, $z\in J(U)$ a regular normal element, and $v_{z,U}$ the associated $z$-adic filtration. We will now further assume that some maximal ideal $M$ of $Q'$ is a $(\sigma,\delta)$-ideal, and we will set $\widehat{Q} = Q'/M$, a \emph{simple} artinian ring. Also set $V = (U+M)/M$, and $\overline{z} = z+M$.

We now list some properties of $\widehat{Q}$, given in \cite{ardakovInv}.

\begin{props}\label{props: maximal order O}
$ $

\begin{enumerate}[label=(\roman*),noitemsep]
\item \cite[Theorem 3.11]{ardakovInv} There exists a maximal order $\mathcal{O}\subseteq \widehat{Q}$ equivalent to $V$. 
\item \cite[\S 3.6, Proposition 3.7(a)]{ardakovInv} $V\subseteq \mathcal{O} \subseteq \overline{z}^{-r}V$ for some $r\geq 0$.
\item \cite[Theorem 3.6]{ardakovInv} $\mathcal{O}$ is a prime hereditary Noetherian ring, with a unique maximal two-sided ideal $J(\mathcal{O})$, and $p\in J(\mathcal{O})$.
\item We will define $J(\mathcal{O})^n$ for all $n\in\mathbb{Z}$. Indeed, if $n\geq 0$, then the definition of $J(\mathcal{O})^n$ is standard: then we set $J(\mathcal{O})^{-1} = \{x\in\widehat{Q} : J(\mathcal{O}) x \subseteq \mathcal{O}\}$ and $J(\mathcal{O})^{-n} := (J(\mathcal{O})^{-1})^n$. The argument in \cite[proof of Proposition 3.9]{ardakovInv} shows that $J$ is left and right invertible, so it follows that $J(\mathcal{O})^a J(\mathcal{O})^b = J(\mathcal{O})^{a+b}$ for all $a,b\in\mathbb{Z}$.
\end{enumerate}
\end{props}

Eventually, we aim to equip $\widehat{Q}$ with the $J(\mathcal{O})$-adic filtration $u$: that is, the filtration defined by $u(x) = r$ if and only if $x\in J(\mathcal{O})^r \setminus J(\mathcal{O})^{r+1}$ for all $x\in\widehat{Q}\setminus \{0\}$. This filtration is of particular interest because, as proved in \cite[Theorem C]{ardakovInv}, $\gr_u(\widehat{Q})$ is a prime ring.

However, in order to obtain interesting information about skew power series over $\widehat{Q}$, we need to understand how $(\sigma,\delta)$ interacts with this filtration.

\begin{lem}\label{lem: checking compatibility of a derivation on O and J(O)}
Suppose that $\tau$ is an automorphism of $\widehat{Q}$ fixing $\mathcal{O}$, and let $d$ be a $\tau$-derivation satisfying $d(\mathcal{O}) \subseteq J(\mathcal{O})$ and $d(J(\mathcal{O})) \subseteq J(\mathcal{O})^2$. Then $d(J(\mathcal{O})^n) \subseteq J(\mathcal{O})^{n+1}$ for all $n\in\mathbb{Z}$.
\end{lem}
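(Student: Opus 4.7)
The plan is to prove this by a two-stage induction: first handle $n \geq 0$ using the twisted Leibniz rule in the standard way, and then bootstrap to negative $n$ by exploiting the invertibility of $J(\mathcal{O})$ from Property \ref{props: maximal order O}(iv).

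The first preparatory step is to observe that $\tau(J(\mathcal{O})^n) = J(\mathcal{O})^n$ for every $n \in \mathbb{Z}$. For $n \geq 0$ this is immediate from $\tau(\mathcal{O}) = \mathcal{O}$ together with Property \ref{props: maximal order O}(iii): $J(\mathcal{O})$ is the \emph{unique} maximal two-sided ideal of $\mathcal{O}$, so any ring automorphism fixing $\mathcal{O}$ must fix $J(\mathcal{O})$ setwise, and hence all its powers. For $n < 0$ the same follows from the characterisation of $J(\mathcal{O})^{-1}$ given in Property \ref{props: maximal order O}(iv) together with $\tau$ being an automorphism.

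For the positive induction, assume the result holds for some $n \geq 1$ and take $a \in J(\mathcal{O})^n$, $b \in J(\mathcal{O})$. The twisted Leibniz rule gives
\[
d(ab) = d(a)b + \tau(a)d(b) \in J(\mathcal{O})^{n+1} \cdot J(\mathcal{O}) + J(\mathcal{O})^n \cdot J(\mathcal{O})^2 \subseteq J(\mathcal{O})^{n+2},
\]
using the inductive hypothesis for $d(a)$, the hypothesis $d(J(\mathcal{O})) \subseteq J(\mathcal{O})^2$, and $\tau(a) \in J(\mathcal{O})^n$. Additivity and the fact that $J(\mathcal{O})^{n+1}$ is generated as an abelian group by such products $ab$ finish the step. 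The base cases $n = 0, 1$ are precisely the standing hypotheses.

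For $n < 0$, the key idea is to avoid any further induction and instead use the Leibniz rule on a product landing in $\mathcal{O}$. Fix $x \in J(\mathcal{O})^n$, and let $y \in J(\mathcal{O})^{-n}$ be arbitrary. By Property \ref{props: maximal order O}(iv), $xy \in J(\mathcal{O})^n J(\mathcal{O})^{-n} = \mathcal{O}$, so $d(xy) \in d(\mathcal{O}) \subseteq J(\mathcal{O})$. On the other hand,
\[
d(xy) = d(x) y + \tau(x) d(y),
\]
and since $-n \geq 1$, the positive-$n$ case (already proved) gives $d(y) \in J(\mathcal{O})^{-n+1}$. Combined with $\tau(x) \in J(\mathcal{O})^n$, this yields $\tau(x)d(y) \in J(\mathcal{O})$. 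Hence $d(x) y \in J(\mathcal{O})$ for all $y \in J(\mathcal{O})^{-n}$, i.e.\ $d(x) \cdot J(\mathcal{O})^{-n} \subseteq J(\mathcal{O})$. Multiplying on the right by $J(\mathcal{O})^n$ and again invoking Property \ref{props: maximal order O}(iv),
\[
d(x) \cdot \mathcal{O} = d(x) \cdot J(\mathcal{O})^{-n} \cdot J(\mathcal{O})^n \subseteq J(\mathcal{O}) \cdot J(\mathcal{O})^n = J(\mathcal{O})^{n+1},
\]
so $d(x) \in J(\mathcal{O})^{n+1}$, as required.

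The only genuine subtleties are (i) checking the $\tau$-invariance of $J(\mathcal{O})^n$ for negative $n$, which could otherwise spoil the Leibniz calculation, and (ii) ensuring that we may freely manipulate the $J(\mathcal{O})^n$ as a graded $\mathbb{Z}$-indexed family under multiplication — both of which are handed to us by the invertibility statement in Property \ref{props: maximal order O}(iv). With those in hand, no delicate maximal-order arguments are required beyond what has already been set up.
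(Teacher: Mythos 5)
Your proof is correct and follows essentially the same route as the paper's: the positive powers are handled by the same Leibniz-rule induction, and the negative powers by applying the Leibniz rule to products landing in $\mathcal{O}$ and then cancelling with Property \ref{props: maximal order O}(iv) (the paper multiplies by $J(\mathcal{O})^{-1}$ where you multiply by $J(\mathcal{O})^n$, a cosmetic difference). Your explicit check that $\tau$ preserves $J(\mathcal{O})^n$ for all $n\in\mathbb{Z}$ is a welcome bit of extra care that the paper only states for $J(\mathcal{O})$ itself.
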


\begin{proof}
We are given that the statement is true for $n=0$ and $n=1$. Note that $\tau$ must preserve $J(\O)$ by the uniqueness of Property \ref{props: maximal order O}.

When $n\geq 2$, we may proceed by induction. Suppose $d(J(\mathcal{O})^{n-1}) \subseteq J(\mathcal{O})^{n}$. Let $a\in J(\mathcal{O})^{n-1}$ and $b\in J(\mathcal{O})$: then $d(ab) = d(a)b + \tau(a) d(b)$, and both terms on the right-hand side are contained in $J(\mathcal{O})^{n+1}$. But as $J(\mathcal{O})^n$ is generated by elements of the form $ab$, we see that $d(J(\mathcal{O})^n) \subseteq J(\mathcal{O})^{n+1}$.

Now choose $a\in J(\mathcal{O})^{-n}$ for $n\geq 1$. Given arbitrary $b\in J(\mathcal{O})^n$, we have $d(a)b = d(ab) - \tau(a)d(b)$, which is an element of $J(\mathcal{O})$, and hence $d(a)J(\mathcal{O})^n \subseteq J(\mathcal{O})$. Right-multiplying both sides by $J(\mathcal{O})^{-1}$ and using Property \ref{props: maximal order O}(iv) now shows that $d(a)J(\mathcal{O})^{n-1} \subseteq \mathcal{O}$, and so by definition $d(a) \in J(\mathcal{O})^{-n+1}$ as required.
\end{proof}

We will show that the filtered ring $(\widehat{Q},u)$ inherits certain near-compatibility properties from $(Q',v_{z,U})$. To do this, it will be convenient to pass through several intermediate filtrations, for which we immediately set up notation:
\begin{itemize}[noitemsep]
\item the $z$-adic filtration with respect to $U$ on $Q'$, denoted $v_{z,U}$ with level sets $\{z^n U\}_{n\in\mathbb{Z}}$,
\item the $\overline{z}$-adic filtration with respect to $V$ on $\widehat{Q}$, denoted $v_{\overline{z},V}$ with level sets $\{\overline{z}^n V\}_{n\in\mathbb{Z}}$,
\item the $\overline{z}$-adic filtration with respect to $\mathcal{O}$ on $\widehat{Q}$, denoted $v_{\overline{z},\mathcal{O}}$  with level sets $\{\overline{z}^n \mathcal{O}\}_{n\in\mathbb{Z}}$,
\item the $J(\mathcal{O})$-adic filtration on $\widehat{Q}$ as defined above, denoted $u$ with level sets $\{J(\mathcal{O})^n\}_{n\in\mathbb{Z}}$.
\end{itemize}

Note that $\overline{z}^n V = (z^n U + M)/M$: that is, $v_{\overline{z},V}$ is just the quotient filtration induced by $v_{z,U}$ on $\widehat{Q}$.

\begin{propn}\label{propn: from Q' to Q-hat, case Q-hat simple}
Suppose that $(\sigma,\delta)$ is compatible with $v_{z,U}$, and write $(\widehat{\sigma}, \widehat{\delta})$ for the skew derivation induced by $(\sigma,\delta)$ on $\widehat{Q}$. Then $(\widehat{\sigma}, \widehat{\delta})$ is compatible with $v_{\overline{z},V}$.
\end{propn}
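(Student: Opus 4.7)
The plan is to verify compatibility directly, exploiting the explicit description of the level sets of $v_{\overline{z},V}$ in terms of those of $v_{z,U}$ that was already recorded immediately before the statement: namely, $\overline{z}^n V = (z^n U + M)/M$ for all $n\in\mathbb{Z}$, which uses only the normality of $z$ in $U$ (and $zM \subseteq M$). In other words, $v_{\overline{z},V}$ is nothing other than the quotient filtration on $\widehat{Q} = Q'/M$ induced by $v_{z,U}$ on $Q'$, so the proposition asserts precisely that a compatible skew derivation descends to an invariant quotient and remains compatible. This is exactly the content of Lemma \ref{quotient}, albeit applied to the filtration $v_{z,U}$ rather than to the original $w'$.

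Concretely, I would first note that since $M$ is by hypothesis a $(\sigma,\delta)$-ideal of $Q'$, we have $\delta(M) \subseteq M$ and $(\sigma-\mathrm{id})(M) \subseteq M$; this is what guarantees that $\widehat{\delta}$ and $\widehat{\sigma}-\mathrm{id}$ are well-defined as additive maps on $\widehat{Q}$. Then, writing $d$ for either $\sigma-\mathrm{id}$ or $\delta$ and $\widehat{d}$ for its induced map, I take $\bar a \in \overline{z}^n V$ and lift it through the equality $\overline{z}^n V = (z^n U + M)/M$ to $a = z^n u + m$ with $u \in U$, $m \in M$. The hypothesis that $(\sigma,\delta)$ is compatible with $v_{z,U}$ gives $d(z^n U) \subseteq z^{n+1} U$, so $d(z^n u) \in z^{n+1} U$; and the $(\sigma,\delta)$-invariance of $M$ gives $d(m) \in M$. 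Adding, $d(a) \in z^{n+1} U + M$, hence
\begin{equation*}
\widehat{d}(\bar a) = \overline{d(a)} \in (z^{n+1} U + M)/M = \overline{z}^{n+1} V,
\end{equation*}
which establishes $\deg_{v_{\overline{z},V}}(\widehat{d}) \geq 1$ and hence the desired compatibility.

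There is no real obstacle: the entire argument is bookkeeping once one recognises the level-set identification. The only points needing a moment's care are (i) checking the normality calculation $\overline{z}^n V = (z^n U + M)/M$ (immediate from $z U = U z$ and $zM, Mz \subseteq M$), and (ii) confirming that $\widehat{\sigma}$ and $\widehat{\delta}$ are well-defined on $\widehat{Q}$, which uses nothing more than the $(\sigma,\delta)$-invariance of $M$ built into the setup of \S\ref{subsec: construction of O}.
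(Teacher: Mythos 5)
Your proposal is correct and follows essentially the same route as the paper: both arguments use the identification $\overline{z}^n V = (z^n U + M)/M$ and push $d \in \{\sigma-\mathrm{id}, \delta\}$ through the quotient, with the $(\sigma,\delta)$-invariance of $M$ ensuring everything descends. The only difference is that you spell out explicitly the role of $d(M)\subseteq M$, which the paper leaves implicit in the one-line computation.
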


\begin{proof}
Write $d$ for either $\delta$ or $\sigma-\mathrm{id}$, and $\widehat{d}$ for $\widehat{\delta}$ or $\widehat{\sigma}-\mathrm{id}$ respectively. By definition, we get
\[
\widehat{d}(\overline{z}^n V) = \widehat{d}((z^n U + M)/M) = (d(z^n U) + M)/M \subseteq (z^{n+1} U + M)/M = \overline{z}^{n+1} V. \qedhere
\]
\end{proof}

\begin{propn}\label{propn: raising to pth powers, case Q-hat simple}
Suppose that $(\widehat{\sigma},\widehat{\delta})$ is compatible with $v_{\overline{z},V}$. Then:
\begin{enumerate}[label=(\roman*)]
\item If $\mathrm{char}(\widehat{Q}) = p$, then there exists some $\ell$ such that $(\widehat{\sigma}^{p^\ell}, \widehat{\delta}^{p^\ell})$ is compatible with $u$.
\item If $\widehat{\delta} = \widehat{\sigma} - \mathrm{id}$, then there exists some $\ell$ such that $(\widehat{\sigma}^{p^\ell}, \widehat{\sigma}^{p^\ell} - \mathrm{id})$ is compatible with $u$.
\end{enumerate}
\end{propn}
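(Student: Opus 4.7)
The plan is to prove (i) and (ii) uniformly by boosting the $v_{\overline{z},V}$-compatibility through two successive filtration comparisons, until it implies compatibility with $u$. Write $\widehat{d}$ for either $\widehat{\delta}$ or $\widehat{\sigma}-\id$ (each having $v_{\overline{z},V}$-degree $\geq 1$ by hypothesis). For $\ell\geq 0$, set $\widehat{d}_\ell := \widehat{d}^{p^\ell}$ in case (i), and $\widehat{d}_\ell := \widehat{\sigma}^{p^\ell}-\id$ in case (ii); in case (i), Remark \ref{rk: can raise skew derivations to pth powers in char p}(i) confirms that $(\widehat{\sigma}^{p^\ell},\widehat{\delta}^{p^\ell})$ is a genuine skew derivation. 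It then suffices to prove $\deg_u(\widehat{d}_\ell)\geq 1$ for $\ell$ sufficiently large.

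First we boost the $v_{\overline{z},V}$-degree of $\widehat{d}_\ell$. In case (i), in characteristic $p$, iteration of $\widehat{d}$ (or equivalently the Frobenius identity $\widehat{\sigma}^{p^\ell}-\id = (\widehat{\sigma}-\id)^{p^\ell}$) gives $\deg_{v_{\overline{z},V}}(\widehat{d}_\ell)\geq p^\ell$. In case (ii), Lemma \ref{lem: raising to pth powers in the char 0 case} applies with $v_{\overline{z},V}$ in place of $u$: this uses $v_{\overline{z},V}(p)\geq 1$, which holds since $p\in zU$ by Property \ref{props: the element z}(iii), giving $p+M \in (zU+M)/M = \overline{z}V$. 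This yields $\deg_{v_{\overline{z},V}}(\widehat{\sigma}^{p^\ell}-\id)\geq \ell$. In either case, we can arrange $\deg_{v_{\overline{z},V}}(\widehat{d}_\ell)\geq N$ for any desired $N$.

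Next we compare three filtrations on $\widehat{Q}$: $v_{\overline{z},V}$, $v_{\overline{z},\mathcal{O}}$, and $u$. (1) From Property \ref{props: maximal order O}(ii), $V\subseteq \mathcal{O}\subseteq \overline{z}^{-r}V$, so $\overline{z}^k V\subseteq \overline{z}^k\mathcal{O}\subseteq \overline{z}^{k-r}V$ for every $k\in\mathbb{Z}$; the two $\overline{z}$-adic filtrations thus differ by at most $r$. (2) Since $\overline{z}$ is regular and normal, $\overline{z}\mathcal{O}$ is a nonzero invertible two-sided ideal of $\mathcal{O}$, properly contained in the unique maximal two-sided ideal $J(\mathcal{O})$. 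By the structure theory of hereditary Noetherian prime rings with an invertible maximal two-sided ideal (Property \ref{props: maximal order O}(iii)--(iv)), every nonzero invertible two-sided ideal factors as a positive power of $J(\mathcal{O})$; hence $\overline{z}\mathcal{O} = J(\mathcal{O})^a$ for some $a\geq 1$, and $\overline{z}^k\mathcal{O} = J(\mathcal{O})^{ak}$ for all $k\in\mathbb{Z}$.

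Combining these, for $x\in J(\mathcal{O})^n$ with $n\in\mathbb{Z}$ we obtain $x\in \overline{z}^{\lfloor n/a\rfloor}\mathcal{O}\subseteq \overline{z}^{\lfloor n/a\rfloor - r}V$ (with $\lceil\cdot\rceil$ replacing $\lfloor\cdot\rfloor$ when $n<0$), hence $\widehat{d}_\ell(x) \in \overline{z}^{\lfloor n/a\rfloor - r + N}V \subseteq J(\mathcal{O})^{a(\lfloor n/a\rfloor - r + N)}$. The elementary inequality $a\lfloor n/a\rfloor\geq n-a+1$ then gives $a(\lfloor n/a\rfloor - r + N)\geq n+1$ whenever $N\geq r+1$, yielding $\deg_u(\widehat{d}_\ell)\geq 1$. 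We therefore choose $\ell$ with $p^\ell\geq r+1$ in case (i), or $\ell\geq r+1$ in case (ii). The main obstacle, and the single genuinely non-trivial structural input, is the identification $\overline{z}\mathcal{O}=J(\mathcal{O})^a$: without this linear comparison between $v_{\overline{z},\mathcal{O}}$ and $u$, no single $\ell$ could work uniformly across all level sets $J(\mathcal{O})^n$.
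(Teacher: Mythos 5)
Your reduction to showing $\deg_u(\widehat{d}_\ell)\geq 1$, and your boosting of the $v_{\overline{z},V}$-degree (via characteristic $p$ in case (i), via Lemma \ref{lem: raising to pth powers in the char 0 case} and $p\in\overline{z}V$ in case (ii)), match the paper's Step 1. The gap is the step you yourself flag as the crux: the identification $\overline{z}\mathcal{O}=J(\mathcal{O})^a$. For $\overline{z}\mathcal{O}$ to be a two-sided (let alone invertible) ideal of $\mathcal{O}$ you need $\overline{z}$ to \emph{normalize $\mathcal{O}$}, i.e. $\overline{z}\mathcal{O}\overline{z}^{-1}=\mathcal{O}$. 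What is known is only that $z$ normalizes $U$, hence $\overline{z}$ normalizes $V$; conjugation by $\overline{z}$ then sends $\mathcal{O}$ to \emph{some} maximal order containing $V$ and equivalent to it, and such maximal orders are not unique, so nothing forces it back onto $\mathcal{O}$. None of Properties \ref{props: the element z} or \ref{props: maximal order O} gives this, and the inference genuinely fails in closely analogous situations: take $\widehat{Q}=M_2(\mathbb{Q}_p)$, $V$ the Iwahori order $\left(\begin{smallmatrix}\mathbb{Z}_p&\mathbb{Z}_p\\ p\mathbb{Z}_p&\mathbb{Z}_p\end{smallmatrix}\right)$, $\overline{z}=\left(\begin{smallmatrix}0&1\\ p&0\end{smallmatrix}\right)$ (which normalizes $V$) and $\mathcal{O}=M_2(\mathbb{Z}_p)$; then $\overline{z}\mathcal{O}$ is a right but not a left ideal of $\mathcal{O}$, conjugation by $\overline{z}$ swaps the two maximal orders containing $V$, and $\overline{z}\mathcal{O}$ is not a power of $J(\mathcal{O})=pM_2(\mathbb{Z}_p)$. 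So your comparison of $v_{\overline{z},\mathcal{O}}$ with $u$ is unproved (and, absent further input from the construction in \cite{ardakovInv}, possibly false), and the whole estimate $J(\mathcal{O})^n\subseteq\overline{z}^{\lfloor n/a\rfloor}\mathcal{O}$ collapses with it. (By contrast, note that the analogous stability $\widehat{\sigma}^{p^\ell}(\mathcal{O})=\mathcal{O}$ \emph{is} automatic once $(\widehat{\sigma}^{p^\ell}-\id)(\mathcal{O})\subseteq\mathcal{O}$, by maximality of $\mathcal{O}$ among equivalent orders; no such smallness is available for conjugation by $\overline{z}$.)

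Your closing assertion that ``without this linear comparison no single $\ell$ could work uniformly across all level sets $J(\mathcal{O})^n$'' is also not correct, and this is exactly where the paper takes a different, and safer, route. The paper only uses the one-sided containments $\overline{z}^nV\subseteq\overline{z}^n\mathcal{O}\subseteq\overline{z}^{n-r}V$ to get $\deg_{\mathcal{O}}(\widehat{d}^{p^m})\geq 1$, then the single fact $\overline{z}^t\in J(\mathcal{O})^2$ (from \cite[\S 3.14]{ardakovInv}) to force $\widehat{d}^{p^\ell}(\mathcal{O})\subseteq J(\mathcal{O})^2$ for suitable $\ell$, and finally Lemma \ref{lem: checking compatibility of a derivation on O and J(O)}, which uses the $\tau$-derivation identity together with the invertibility of $J(\mathcal{O})$ (Property \ref{props: maximal order O}(iv)) to propagate control on $\mathcal{O}$ and $J(\mathcal{O})$ to all powers $J(\mathcal{O})^n$, $n\in\mathbb{Z}$, uniformly. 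If you replace your filtration-comparison step by this propagation lemma (or prove $\overline{z}$-stability of $\mathcal{O}$ from the specific construction of $\mathcal{O}$ in \cite{ardakovInv}, which you have not done), the argument goes through; as written, it does not.
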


\begin{proof}
For simplicity of notation, we will identify $\Gamma \cong \mathbb{Z}$ throughout this proof without loss of generality. Also write $\deg_V$, $\deg_\mathcal{O}$ and $\deg_{J(\O)}$ for degrees with respect to the filtrations $v_{\overline{z},V}$, $v_{\overline{z},\mathcal{O}}$ and $u$ respectively. Write $\widehat{d}$ for either $\widehat{\delta}$ or $\widehat{\sigma}-\mathrm{id}$. Cases (i) and (ii) will be treated in parallel, as their methods are very similar.

\textbf{Step 1.} By assumption, $\deg_V(\widehat{d}) \geq 1$. Property \ref{props: maximal order O}(ii) tells us that $V \subseteq \mathcal{O} \subseteq \overline{z}^{-r} V$ for some $r$, and hence, multiplying through by $\overline{z}^n$, it follows that $\overline{z}^n V \subseteq \overline{z}^n \mathcal{O} \subseteq \overline{z}^{n-r} V$ for all $n$.

In case (i), choose some integer $m$ such that $p^m \geq r+1$: this implies that $\deg_V(\widehat{d}^{p^m}) \geq p^m \geq r+1$, and so $\widehat{d}^{p^m}(\overline{z}^n \mathcal{O}) \subseteq \widehat{d}^{p^m}(\overline{z}^{n-r} V) \subseteq \overline{z}^{n+1} V \subseteq \overline{z}^{n+1} \mathcal{O}$: or, in other words, $\deg_\mathcal{O}(\widehat{d}^{p^m}) \geq 1$.

In case (ii), choose some integer $m \geq r+1$. Then since $p\in\overline{z}V$, we have that $v_{\overline{z},V}(p)\geq 1$, so applying Lemma \ref{lem: raising to pth powers in the char 0 case} gives that $\deg_V(\widehat{\sigma}^{p^m} - \id) \geq m$. We conclude similarly that $\deg_\mathcal{O}(\widehat{\sigma}^{p^m} - \id) \geq 1$.

\textbf{Step 2.} \cite[\S 3.14, proof of Theorem C]{ardakovInv} tells us that $\overline{z}^{t}\in J(\mathcal{O})^2$ for a large enough integer $t$: fix such a $t$.

In case (i), let $\ell$ be such that $p^{\ell-m} \geq t$, so that $$\widehat{d}^{p^\ell}(\mathcal{O}) \subseteq (\widehat{d}^{p^m})^{p^{\ell-m}}(\O) \subseteq \overline{z}^{p^{\ell-m}} \mathcal{O} \subseteq \overline{z}^t \mathcal{O} \subseteq J(\mathcal{O})^2.$$ Now Lemma \ref{lem: checking compatibility of a derivation on O and J(O)} shows that $\deg(\widehat{d}^{p^\ell}) \geq 1$.

In case (ii), we can apply Lemma \ref{lem: raising to pth powers in the char 0 case} to show that $\deg_\O(\widehat{\sigma}^{p^\ell} - \id) = \deg_\O ((\widehat{\sigma}^{p^m})^{p^{\ell-m}} - \id) \geq \ell-m$ for any $\ell$: choosing $\ell\geq m+t$ allows us to conclude similarly that $(\widehat{\sigma}^{p^\ell}-\id)(\mathcal{O})$ is contained in $J(\mathcal{O})^2$, and hence $\deg_{J(\O)}(\widehat{\sigma}^{p^\ell}-\id) \geq 1$.
\end{proof}

\subsection{Invariant maximal ideals}

The results of the previous subsection are almost sufficient to complete the proof of Theorem \ref{B}. The only obstacle is the need to find a maximal ideal $M$ of $Q'$ that is $(\sigma,\delta)$-invariant. In general, a maximal ideal $M_1$ of $Q'$ will not be a $\sigma$-ideal, but will instead have a nontrivial $\sigma$-orbit $\{M_1, \dots, M_s\}$, so we cannot pass to the quotient $Q'/M_1$ without losing the skew derivation $(\sigma,\delta)$. 

Write $N := \bigcap_{i=1}^s M_i$, a minimal $\sigma$-prime ideal, and set $\widehat{Q} = Q'/N$, now a \emph{semisimple} artinian ring, and let $\widehat{\sigma}$ be the automorphism of $\widehat{Q}$ induced by $\sigma$. In this section, we will prove that after raising $\sigma$ to a sufficiently high $p$'th power, this construction produces a simple ring.

Now, even though $\widehat{Q}$ is $\widehat{\sigma}$-prime as defined, it may not be $\widehat{\sigma}^{p^m}$-prime for $m\geq 1$: that is, the $\sigma$-orbit $\{M_1, \dots, M_s\}$ may break up into more than one $\sigma^{p^m}$-orbit. This happens precisely when $p$ divides $s$, so we begin by showing that we can restrict our attention away from this case.

\begin{lem}\label{lem: size of orbit of maximal ideals can be made coprime to p}
There exists some $k\geq 0$ such that $M'_1 := M_1$ has $\sigma^{p^k}$-orbit $\{M'_1, M'_2, \dots, M'_t\}$ for some $t$ coprime to $p$.
\end{lem}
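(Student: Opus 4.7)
The plan is to reduce this to a purely combinatorial statement about the action of a cyclic group on the set of maximal ideals of $Q'$. The $\sigma$-orbit of $M_1$ has finite size $s$, by assumption. Passing from $\sigma$ to $\sigma^{p^k}$ replaces the cyclic group $\langle\sigma\rangle$ acting on $\{M_1,\dots,M_s\}$ by its subgroup $\langle\sigma^{p^k}\rangle$, so the resulting orbit of $M_1$ shrinks in a controlled way depending only on how $p^k$ interacts with $s$.

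More precisely, I would argue as follows. The size of the $\sigma^{p^k}$-orbit of $M_1$ is the smallest positive integer $t$ such that $\sigma^{p^k t}(M_1) = M_1$, and since $\sigma$ acts on $\{M_1,\dots,M_s\}$ as a cycle of length $s$, this condition is equivalent to $s \mid p^k t$. Hence
\begin{equation*}
t \;=\; \frac{s}{\gcd(s,p^k)}.
\end{equation*}
Now write $s = p^a s'$ with $\gcd(s',p) = 1$. For any $k \geq a$ we have $\gcd(s,p^k) = p^a$, so $t = s'$, which is coprime to $p$ by construction. Taking $k = a$ (or any larger integer) and renaming the $\sigma^{p^k}$-orbit of $M_1$ as $\{M'_1,\dots,M'_t\}$ gives the required conclusion.

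There is essentially no obstacle here; the argument is a one-line observation about cyclic group actions once the setup from the preceding discussion is in place. The only thing worth verifying carefully is that $\sigma^{p^k}$ genuinely acts as a permutation on the $\sigma$-orbit $\{M_1,\dots,M_s\}$ (which is immediate, since $\sigma^{p^k}$ commutes with $\sigma$ and hence preserves the orbit), and that the formula $t = s/\gcd(s,p^k)$ is correct (standard). No use of the ring structure, the filtration, or the derivation $\delta$ is needed at this stage.
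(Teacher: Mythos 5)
Your argument is correct and matches the paper's in substance: both reduce the statement to pure combinatorics of the cyclic action of $\sigma$ on the finite orbit $\{M_1,\dots,M_s\}$. The only difference is presentational — the paper inducts by splitting the orbit into $p$ equal $\sigma^p$-orbits and dividing $s$ by $p$ at each step, whereas you compute the $\sigma^{p^k}$-orbit size directly as $s/\gcd(s,p^k)$ and take $k$ at least the $p$-adic valuation of $s$; both yield the same $k$ and $t$.
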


\begin{proof}
Suppose $s = ps'$. Write $\mathfrak{O} = \{1, 2, \dots, s\}$ and $\mathfrak{o}_i = \{pn + i: 0\leq n\leq s'-1\}$ for $1\leq i\leq p$: then $\mathfrak{O}$ is the disjoint union of the $\mathfrak{o}_1, \dots, \mathfrak{o}_p$. These are index sets for $\sigma$-orbits and $\sigma^p$-orbits respectively: that is, each set $\{M_j : j\in \mathfrak{o}_i\}$ (for $1\leq i\leq p$) is a $\sigma^p$-orbit, and their disjoint union is the $\sigma$-orbit $\{M_j : j\in \mathfrak{O}\}$.

So we will replace $\sigma$ by $\sigma^p$, and replace the $\sigma$-orbit $\{M_j : j\in \mathfrak{O}\}$ with the $\sigma^p$-orbit $\{M_j : j\in \mathfrak{o}_1\}$. This orbit has size $s'$, and so the result follows from induction.\end{proof}

Replacing $\sigma$ by $\sigma^{p^k}$, if necessary, we may assume henceforth that $p$ does not divide $s$.

Now, using the general theory of semisimple artinian rings, we see that $\widehat{Q}\cong A_1\times\cdots\times A_s$, where $A_i=Q'/M_i$ for each $i$. Furthermore, we may assume without loss of generality that $\sigma(M_i)=M_{i+1}$ for each $i$ (indices taken modulo $s$), and hence the action of $\widehat{\sigma}$ on $A_1\times\cdots\times A_s$ is given by $\widehat{\sigma}(a_1+M_1,\cdots,a_s+M_s)=(\sigma(a_s)+M_1,\sigma(a_1)+M_2,\cdots,\sigma(a_{s-1})+M_s)$.

Now, let us assume $\sigma-\mathrm{id}$ has positive degree with respect to $v_{z,U}$, i.e. $(\sigma,\sigma-\mathrm{id})$ is compatible with $v_{z,U}$ as before. Set $V = (U+N)/N \subseteq \widehat{Q}$, and write $\overline{z} = z + N \in \widehat{Q}$. We set up the remaining necessary objects and filtrations in advance:

\begin{itemize}[noitemsep]
\item $V_i\subseteq A_i$ be the image of $V$ under the $i$'th projection $\widehat{Q}\to A_i$,
\item $W:=V_1\times V_2\times\cdots\times V_s\subseteq\widehat{Q}$,
\item $\overline{z}_i \in V_i$ the image of $\overline{z}$ under the projection $\widehat{Q}\to A_i$ for each $i$,
\item $v_{\overline{z},V}$ the $\overline{z}$-adic filtration with respect to $V$ on $\widehat{Q}$ with level sets $\{\overline{z}^n V\}_{n\in\mathbb{Z}}$.
\end{itemize}

It follows from the proof of Proposition \ref{propn: from Q' to Q-hat, case Q-hat simple} that $\widehat{\sigma}-\id$ has positive degree with respect to $v_{\overline{z},V}$. Since each $A_i$ is simple, we are now back in the situation of the previous subsection, and we may choose a maximal order $\mathcal{O}_i\subseteq A_i$ equivalent to $V_i$ satisfying Properties \ref{props: maximal order O} (where $\widehat{Q}$ is replaced by $A_i$ in the statement). In particular, $\mathcal{O}_i\subseteq\overline{z}_i^{-r_i} V_i$ for some $r_i\geq 0$ for each $i$. 

So, write $\O = \O_1 \times \dots \times \O_s \subseteq A_1\times\cdots\times A_s\cong\widehat{Q}$, and set $r$ to be the maximum of the $r_i$ for $1\leq i\leq s$, so that $\mathcal{O}\subseteq\overline{z}_1^{-r}V_1\times\cdots\times\overline{z}_s^{-r} V_s$. Note that since $A_i=Q'/M_i$, it follows that $\overline{z}_i$ is the image of $z\in U$ modulo the maximal ideal $M_1$, and since $\widehat{Q}\cong A_1\times\cdots\times A_s$, it is clear that the image of $\overline{z}$ under this isomorphism is $(\overline{z}_1,\cdots,\overline{z}_s)$, and hence $\mathcal{O}\subseteq\overline{z}^{-r}W$.

Also, \cite[Proposition 3.2.4(ii)]{MR} implies that $V\subseteq W$ and the two are equivalent as orders in $\widehat{Q}$. This implies that $W\subseteq\overline{z}^{-a} V$ for some $a\geq 0$, so $\mathcal{O}\subseteq\overline{z}^{-(r+a)}V$, so we set $x:=r+a$ to get that $\mathcal{O}\subseteq\overline{z}^{-x}V$.

\begin{lem}
$\O$ is a maximal order equivalent to $V$ inside $\widehat{Q}$.
\end{lem}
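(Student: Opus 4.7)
The plan is to verify the three conditions required: that $\O$ is an order in $\widehat{Q}$, that it is equivalent to $V$, and that it is maximal among such orders. The first two are essentially packaged into the construction. For the fact that $\O$ is an order in $\widehat{Q}$, any element $q=(q_1,\dots,q_s)\in \widehat{Q}$ can be written, in each factor $A_i$, as $s_i^{-1}r_i$ with $s_i, r_i\in \O_i$ and $s_i$ regular in the simple artinian ring $A_i$; then $s=(s_1,\dots,s_s)\in\O$ is a unit and $r = (r_1,\dots,r_s)\in\O$ gives $q=s^{-1}r$. For equivalence with $V$, I would use the chain already derived just before the lemma: $V\subseteq W\subseteq \O\subseteq \overline{z}^{-x}V$, together with the normality of $\overline{z}$ in $V$, which immediately yields regular elements witnessing $\O\sim V$.

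For maximality, suppose $\O\subseteq \O'$ is an order equivalent to $\O$ in $\widehat{Q}$. The key observation is that the central primitive idempotents $e_1,\dots,e_s$ associated to the Wedderburn decomposition $\widehat{Q}=A_1\times\cdots\times A_s$ already lie in $\O$ (each $e_i$ is the identity of $\O_i$). Hence $\O'=\O'e_1\oplus\cdots\oplus\O'e_s$, so writing $\O'_i:=\O'e_i\subseteq A_i$, each $\O'_i$ is a subring of $A_i$ containing $\O_i$.

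Next I would transfer the equivalence componentwise. Since each $A_i$ is simple artinian, an element $a=(a_1,\dots,a_s)\in\widehat{Q}$ is regular if and only if each $a_i$ is a unit in $A_i$. Thus if $a\O'b\subseteq \O$ and $c\O d\subseteq\O'$ with $a,b,c,d$ regular in $\widehat{Q}$, applying $e_i$ to each side gives $a_i\O'_ib_i\subseteq\O_i$ and $c_i\O_id_i\subseteq\O'_i$ with all of $a_i,b_i,c_i,d_i$ units in $A_i$. Hence $\O'_i$ is an order in $A_i$ equivalent to $\O_i$ and containing it. By the maximality of $\O_i$ (Property \ref{props: maximal order O}(i) applied to each factor), $\O'_i=\O_i$ for every $i$, and therefore $\O'=\O$.

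The one genuine subtlety is the componentwise transfer of regularity, but this is exactly where the semisimple artinian structure of $\widehat{Q}$ is used: there is no regularity to lose, because the zero divisors in a finite product of simple artinian rings are precisely those with a non-unit component. Once this is in hand, the rest is bookkeeping on the idempotent decomposition of $\O'$. No additional results beyond those already developed in the subsection are needed.
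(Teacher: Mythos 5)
Your proof is correct, but it takes a more self-contained route than the paper. The paper disposes of the lemma in one line: having already noted (via \cite[Proposition 3.2.4(ii)]{MR}) that $V\subseteq W$ are equivalent orders, it invokes \cite[Proposition 5.1.5]{MR}, using the primality of each $\O_i$ (Property \ref{props: maximal order O}(iii)), to conclude that $\O=\O_1\times\cdots\times\O_s$ is a maximal order equivalent to $W$, hence to $V$. You instead prove the relevant ``product of maximal orders is maximal'' statement directly: the central idempotents $e_i$ lie in $\O$, so any equivalent order $\O'\supseteq\O$ splits as $\bigoplus_i \O'e_i$, the equivalence transfers componentwise because regular elements of the semisimple artinian ring $\widehat{Q}$ are exactly the tuples of units, and maximality of each $\O_i$ forces $\O'e_i=\O_i$. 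This is the standard proof of the cited fact, and it buys you an argument that needs only the maximality of the $\O_i$ (not their primality) and no appeal to \cite{MR} beyond what was already used before the lemma; the cost is re-deriving a textbook result. Two small remarks: for the equivalence with $V$ you do not actually need normality of $\overline{z}$, only that $\overline{z}^x$ is a unit of $\widehat{Q}$, so that $V\subseteq\O$ and $\overline{z}^x\O\subseteq V$ already witness equivalence; and your first step should note explicitly that each $\O_i$ is an order in $A_i$ by Property \ref{props: maximal order O}(i) applied to $(A_i,V_i)$, which is what licenses writing $q_i=s_i^{-1}r_i$.
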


\begin{proof}
Since $V\subseteq W\subseteq\mathcal{O}$ and each $\mathcal{O}_i$ is prime by Property \ref{props: maximal order O}(iii) applied to $V_i$, $\mathcal{O}$ is a maximal order by \cite[Proposition 5.1.5]{MR}, equivalent to $W$.
\end{proof}

Note that $J(\mathcal{O}) = J(\mathcal{O}_1) \times \dots \times J(\mathcal{O}_s)$, so we again consider the $J(\O)$-adic filtration $u$ on $\widehat{Q}$ with level sets $\{J(\O)^n\}_{n\in\mathbb{Z}}$. If we set $u_i$ as the $J(\mathcal{O}_i)$-adic filtration on $A_i$, then it follows that $$u(a_1+M_1,\cdots,u_s+A_s)=\min\{u_i(a_i+M_i):i=1,\cdots,s\},$$ i.e. $u$ is the product filtration.

It follows from \cite[\S 3.14, proof of Theorem C]{ardakovInv} that for each $i$ we may choose some large enough integer $t_i$ such that $\overline{z}_i^{t_i}\in J(\O_i)^2$, and setting $t = \max_i\{t_i\}$ we see that $$\overline{z}^t=(\overline{z}_1^t,\cdots,\overline{z}_s^t) \in J(\mathcal{O}_1)^2\times\cdots\times J(\mathcal{O}_s)^2 = J(\mathcal{O})^2.$$

Therefore, since $\widehat{\sigma}-\id$ has positive degree with respect to $v_{\overline{z},V}$, it follows from Lemma \ref{lem: raising to pth powers in the char 0 case} that $\widehat{\sigma}^{p^{t+x}}-\id$ has degree at least $t+x$ with respect to $v_{\overline{z},V}$, i.e. $$(\widehat{\sigma}^{p^{t+x}}-\id)(\overline{z}^mV)\subseteq\overline{z}^{m+t+x} V$$ for all $m\in\mathbb{Z}$.

In particular, $(\widehat{\sigma}^{p^{t+x}}-\id)(\mathcal{O})\subseteq (\widehat{\sigma}^{p^{t+x}}-\id)(\overline{z}^{-x}V)\subseteq\overline{z}^tV\subseteq J(\mathcal{O})^2$, and hence $\widehat{\sigma}^{p^{t+x}}-\id$ has positive degree with respect to the $J(\mathcal{O})$-adic filtration $u$ by Lemma \ref{lem: checking compatibility of a derivation on O and J(O)}, i.e. $u((\widehat{\sigma}^{p^{t+x}}-\id)(q))>u(q)$ for all $q\in\widehat{Q}$.

In particular, if we set $e_i=(0,\cdots,0,1,0,\cdots,0)\in A_1\times\cdots\times A_s\cong\widehat{Q}$, where the 1 is in the $i$'th position, then $u((\widehat{\sigma}^{p^{t+x}}-\id)(e_1))>u(e_1)=0$. But $\widehat{\sigma}^{n}(e_1)=e_{n+1}$, subscripts taken modulo $s$, so  $\widehat{\sigma}^{p^{t+x}}(e_1)=e_{p^{t+x}+1}$, and hence $u(e_{p^{t+x}+1}-e_1)>0$, and since $u$ is just the product filtration, this implies that $e_1$ and $e_{p^{t+x}+1}$ must share a common non-zero component, which is only possible if $p^{t+x}\equiv 0$ (mod $s$).

But since we are assuming that $p$ does not divide $s$, this is only possible if $s=1$, and hence $\widehat{Q}=A_1$ is a simple ring, and $N=M_1$ is maximal. So altogether, we have proved:

\begin{thm}\label{invariant maximal ideal}
If $\sigma$ is an automorphism of $Q'$ such that $(\sigma,\sigma-\mathrm{id})$ is compatible with $v_{z,U}$, then there exists $k\in\mathbb{N}$ and a maximal ideal $M$ of $Q'$ such that $M$ is $\sigma^{p^k}$-invariant.\qed
\end{thm}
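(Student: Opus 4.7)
The plan is to start with an arbitrary maximal ideal $M_1$ of $Q'$, denote its finite $\sigma$-orbit by $\{M_1, \ldots, M_s\}$, and show that by replacing $\sigma$ with a sufficiently high $p$-th power, we can force $s = 1$.

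First, I would reduce to the case where $\gcd(p,s) = 1$: whenever $p \mid s$, the $\sigma$-orbit of $M_1$ splits into $p$ distinct $\sigma^p$-orbits of size $s/p$, so iterating yields an orbit whose size is coprime to $p$ (this is exactly Lemma \ref{lem: size of orbit of maximal ideals can be made coprime to p}). With $s$ coprime to $p$ in hand, I would form the $\sigma$-prime ideal $N = M_1 \cap \cdots \cap M_s$ and pass to the semisimple quotient $\widehat{Q} = Q'/N \cong A_1 \times \cdots \times A_s$ with $A_i := Q'/M_i$, on which the induced automorphism $\widehat{\sigma}$ cyclically permutes the factors.

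Next, I would build the right filtered data on $\widehat{Q}$: let $V_i$ be the image of $V$ in each $A_i$, choose a maximal order $\mathcal{O}_i \subseteq A_i$ equivalent to $V_i$ via Property \ref{props: maximal order O}(i), and set $\mathcal{O} = \prod \mathcal{O}_i$. Combining the individual inclusions $\mathcal{O}_i \subseteq \overline{z}_i^{-r_i} V_i$ gives $\mathcal{O} \subseteq \overline{z}^{-x}V$ for a uniform $x \geq 0$, and from \cite[\S 3.14]{ardakovInv} we get $\overline{z}^t \in J(\mathcal{O})^2$ for some $t$. By Proposition \ref{propn: from Q' to Q-hat, case Q-hat simple}, compatibility of $(\sigma,\sigma-\id)$ with $v_{z,U}$ descends to compatibility of $(\widehat{\sigma}, \widehat{\sigma}-\id)$ with $v_{\overline{z},V}$. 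Now Lemma \ref{lem: raising to pth powers in the char 0 case} upgrades this to $\deg_{v_{\overline{z},V}}(\widehat{\sigma}^{p^{t+x}} - \id) \geq t+x$, so that $(\widehat{\sigma}^{p^{t+x}}-\id)(\mathcal{O}) \subseteq \overline{z}^{t}V \subseteq J(\mathcal{O})^2$. By Lemma \ref{lem: checking compatibility of a derivation on O and J(O)}, this forces $\widehat{\sigma}^{p^{t+x}} - \id$ to have positive degree with respect to the $J(\mathcal{O})$-adic filtration $u$ on $\widehat{Q}$.

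The final step is to probe the cyclic permutation of the factors by the idempotents: apply the positive-degree condition to $e_1 = (1,0,\ldots,0) \in A_1 \times \cdots \times A_s$. Since $\widehat{\sigma}^n(e_1) = e_{n+1}$ (index modulo $s$), we get $u(e_{p^{t+x}+1} - e_1) > 0$. But $u$ is the product of the individual $J(\mathcal{O}_i)$-adic filtrations, so $e_{p^{t+x}+1}$ and $e_1$ must share a common nonzero coordinate, forcing $p^{t+x} \equiv 0 \pmod s$; combined with $\gcd(p,s) = 1$, this gives $s = 1$ and hence $M := N = M_1$ is a maximal $\sigma^{p^k}$-invariant ideal.

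The main obstacle is the careful coordination of four different filtrations ($v_{z,U}$, $v_{\overline{z},V}$, $v_{\overline{z},\mathcal{O}}$, and $u$), making sure that compatibility information survives each transition and is strong enough at the end to give positive degree with respect to the relatively coarse $J(\mathcal{O})$-adic filtration. Once that machinery is in place, the final contradiction is the clean observation that a skew derivation of positive degree on a product filtration cannot genuinely permute the summands nontrivially.
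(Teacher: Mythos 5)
Your proposal is correct and follows essentially the same route as the paper: reduce to an orbit of size coprime to $p$, pass to the semisimple quotient, build the product maximal order $\mathcal{O}$ and the $J(\mathcal{O})$-adic filtration, force $\widehat{\sigma}^{p^{t+x}}-\id$ to have positive degree there, and kill the cyclic permutation via the idempotent $e_1$. The only point you compress is the passage from $\mathcal{O}\subseteq\overline{z}^{-r}(V_1\times\cdots\times V_s)$ to $\mathcal{O}\subseteq\overline{z}^{-x}V$, which in the paper uses that $V$ and $W=V_1\times\cdots\times V_s$ are equivalent orders; this is a minor detail and your argument is otherwise the paper's proof.
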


\subsection{Proof of main results}

As in previous subsections, and in the statement of Theorem \ref{B}, $R$ will denote a \emph{prime} algebra over $\mathbb{Z}_p$, equipped with a complete, separated Zariskian filtration $w: R\to \mathbb{Z}\cup\{\infty\}$ such that $\gr_w(R)$ is commutative (and Noetherian), $w(p^n)=nw(p)\geq n$ for all $n\in\mathbb{N}$, and $(\gr_w(R))_{\neq 0}$ is not nilpotent. We also assume that $(\sigma,\delta)$ is compatible with $w$ and $\sigma\delta=\delta\sigma$.

\emph{Proof of Theorem \ref{B}.} Following \cite{ardakovInv}, our construction goes as follows.

\textbf{Step 1.} Let $Q'$ be the completion of $Q(R)$ with respect to the filtration $w'$ constructed in \S \ref{subsec: constructing w'}, an artinian ring by Property \ref{props: the filtration w'}(v). As we remarked below Properties \ref{props: the element z}, the induced skew derivation $(\sigma,\delta)$ on $Q'$ remains compatible with $w'$.

\textbf{Step 2.} Let $U$ be the positive part of $Q'$, i.e. $U = w^{-1}([0,\infty])$, and choose the regular normal element $z\in J(U)$ satisfying Properties \ref{props: the element z}. Then the $z$-adic filtration $v_{z,U}$ with respect to $U$ on $Q'$ satisfies Proposition \ref{propn: the filtration v_{z,U}}: that is, if we are in case (a) (where $R$ is an $\mathbb{F}_p$-algebra), then we have some $m$ such that $(\sigma^{p^m}, \delta^{p^m})$ is compatible with $v_{z,U}$, and if we are in case (b) (where $\delta=\sigma-\mathrm{id}$), then we have some $m$ such that $(\sigma^{p^m}, \sigma^{p^m} - \mathrm{id})$ is compatible with $v_{z,U}$.

\textbf{Step 3.} In both cases, we now know that $(\sigma^{p^m}, \sigma^{p^m} - \mathrm{id})$ is compatible with $v_{z,U}$, and so we may apply Theorem \ref{invariant maximal ideal} to choose a maximal ideal $M$ of $Q'$ such that $M$ is $\sigma^{p^k}$-invariant for some $k\geq m$. It follows trivially that $M$ is $(\sigma^{p^t},\sigma^{p^t}-\mathrm{id})$-invariant for $t = k$, which is what we need for case (b).

So assume we are in case (a). As $M$ is a minimal $\sigma^{p^k}$-invariant prime ideal, it is in particular a minimal $\sigma^{p^k}$-prime ideal, and so it follows from Theorem \ref{C} that there exists an ideal $J\supseteq M$ such that $J$ is $\sigma^{p^t}$-prime for some $t\geq k$, and $\delta^{p^{t}}(J)\subseteq J$. But $M$ is a maximal ideal of $Q'$, and hence $J=M$, meaning that $M$ is $(\sigma^{p^t},\delta^{p^t})$-invariant.

\textbf{Step 4.} Let $\widehat{Q}:=Q'/M$, a simple artinian ring. As $M$ is invariant under $(\sigma^{p^t}, \delta^{p^t})$ in case (a), there is an induced skew derivation $\left(\widehat{\sigma^{p^t}},\widehat{\delta^{p^t}}\right)$ on $\widehat{Q}$ in this case; similarly, there is an induced skew derivation $\left(\widehat{\sigma^{p^t}},\widehat{\sigma^{p^t}}-\id\right)$ on $\widehat{Q}$ in case (b).

Set $V = (U+M)/M$ and $\overline{z} = z+M$ as in \S \ref{subsec: construction of O}. Writing $v_{\overline{z}, V}$ for the quotient filtration of $v_{z,U}$ as in that subsection, we see by Proposition \ref{propn: from Q' to Q-hat, case Q-hat simple} that our induced skew derivation on $\widehat{Q}$ is compatible with $v_{\overline{z}, V}$ in both cases.

\textbf{Step 5.} Let $\mathcal{O}$ be a maximal order in $\widehat{Q}$ equivalent to $V$ as in Properties \ref{props: maximal order O}, and let $u$ be the $J(\mathcal{O})$-adic filtration on $V$. It now follows from \cite[Theorem C]{ardakovInv} that $\gr_u \widehat{Q}$ is a skew polynomial ring in one variable over a central simple algebra, and hence is prime and Noetherian. Since $Q(R)$ is simple, the composition $Q(R)\to Q'\to\widehat{Q}$ is injective, so we will denote the restriction of $u$ to $Q(R)$ again by $u$: we have seen that $\widehat{Q}$ is in fact the completion of $Q(R)$ with respect to this restriction, so $\gr_u Q(R)\cong \gr_u \widehat{Q}$ is prime and Noetherian as required. Also, the map $(R,w)\to (Q(R),u)$ is continuous by \cite[Theorem C]{ardakovInv}.

Furthermore, applying Proposition \ref{propn: raising to pth powers, case Q-hat simple}, we may find some $\ell$ such that the skew derivations $\left((\widehat{\sigma^{p^t}})^{p^\ell},(\widehat{\delta^{p^t}})^{p^\ell}\right)$ in case (a), resp. $\left((\widehat{\sigma^{p^t}})^{p^\ell},(\widehat{\sigma^{p^t}})^{p^\ell} - \mathrm{id}\right)$ in case (b), are compatible with $u$. But the restrictions of $(\widehat{\sigma^{p^t}})^{p^\ell}$ and $(\widehat{\delta^{p^t}})^{p^\ell}$ to $Q(R)$ are simply $\sigma^{p^N}$ and $\delta^{p^N}$ respectively, where $N = t + \ell$. The desired conclusions now follow.\qed

Finally, we conclude with a proof of our main result, which now follows easily from Theorem \ref{B} and the results from \S 2.

\emph{Proof of Theorem \ref{A}.} By Theorem \ref{B}, there exists a filtration $u$ on $Q(R)$ such that $\gr_u Q(R)$ is prime and Noetherian, and the map $(R,w)\to (Q(R),u)$ is continuous. 

If $R$ is an $\mathbb{F}_p$-algebra then there exists $N\in\mathbb{N}$ such that $(\sigma^{p^N},\delta^{p^N})$ is compatible with $u$, and if $\delta=\sigma-\mathrm{id}$ then there exists $N\in\mathbb{N}$ such that $(\sigma^{p^N},\sigma^{p^N}-\id)$ is compatible with $u$.

Therefore, it follows from Theorem \ref{thm: key for Thm A} that if $R$ is an $\mathbb{F}_p$-algebra then the skew power series ring $R^b[[x^{p^N};\sigma^{p^N},\delta^{p^N}]]$ is prime, and if $\delta=\sigma-\mathrm{id}$ then $R^b[[(x+1)^{p^N} - 1;\sigma^{p^N},\sigma^{p^N}-\id]]$ is prime, and this completes the proof.\qed 

\subsection{Applications}\label{subsec: applications}

\begin{ex}\label{ex: iterated local SPS ring}
Suppose that $A$ is an iterated local skew power series ring (in the sense of \cite{woods-SPS-dim}) with maximal ideal $\mathfrak{m}_A$, i.e. $A = k[[x_1]][[x_2; \sigma_2, \delta_2]]\dots [[x_n; \sigma_n, \delta_n]]$, over $k = \mathbb{F}_p$ or $\mathbb{Z}_p$, such that $\gr_f(A)$ is an iterated \emph{commutative} polynomial ring over $\mathbb{F}_p$. (Here, $f$ can be taken to be the $\mathfrak{m}_A$-adic filtration or other related filtrations -- see the remark below.) Suppose $(\tau, \varepsilon)$ is a skew derivation on $A$ which is compatible with the filtration. Write $S = A[[y; \tau, \varepsilon]]$. Then, given any prime ideal $P\lhd A$ which is $(\tau,\varepsilon)$-invariant, we will show that there exists a finite-index subring $S_N\subseteq S$ (containing $A$) such that $PS_N$ is a prime ideal of $S_N$.

\textit{Remark.} The assumption that $\gr_f(A)$ is commutative is reasonably mild: for example, the graded ring associated to the natural $\mathfrak{m}_A$-adic filtration will be commutative if all of the $(\sigma_i, \delta_i)$ are compatible in the sense of Definition \ref{defn: compatible}. Alternatively, we can construct such a filtration $f$ if $A$ is \emph{triangular} -- see \cite[\S 0.3]{woods-SPS-dim}.

We proceed as follows. It is known already that $\mathfrak{m}_AS$ is a prime ideal of $S$ \cite{woods-SPS-dim}, so take any non-maximal prime ideal $P\lhd A$ which is preserved by $(\tau, \varepsilon)$. We will show that $R = A/P$ satisfies the hypotheses of Theorem \ref{A} by induction on $n$ -- in particular, we need only show that $\gr(R)_{\neq 0}$ contains a non-nilpotent element.

So we begin with a base case. If $A = \mathbb{F}_p[[x_1]]$ or $\mathbb{Z}_p$, then (as $P$ is assumed non-maximal) we must have $P = 0$. Hence $\gr(R) \cong \mathbb{F}_p[X]$, and we are done by taking the element $X$.

For larger $n$, write $B = k[[x_1]][[x_2;\sigma_2, \delta_2]]\dots [[x_{n-1}; \sigma_{n-1},\delta_{n-1}]]$ (again a local ring with maximal ideal $\mathfrak{m}_B$), so that $A = B[[x_n; \sigma_n, \delta_n]]$. In this context, we know that $(P\cap B)A$ is a two-sided ideal of $A$, and we write $\overline{a}$ for the image of $a$ under the map $A\to \overline{A} := A/(P\cap B)A$. So in the case where $P\cap B = \mathfrak{m}_B$, we know that $R = A/P$ is a prime quotient ring of $\overline{A} \cong \mathbb{F}_p[[\overline{x}_n]]$ and that $P$ is non-maximal, so $R \cong \mathbb{F}_p[[\overline{x}_n]]$ and $\gr(R) \cong \mathbb{F}_p[X]$ as above.

Hence we assume that $P\cap B \neq \mathfrak{m}_B$. Continue to denote the filtration on $A$ by $f$, and assume it takes values inside $e\mathbb{N}\cup\{\infty\}$ for some $e > 0$. Note that $f$ induces separated filtrations $f_P$ on $A/P$ (and on the subgroup $(B+P)/P$) and $\overline{f}$ on $B$ defined by
\begin{align*}
f_P(a+P) &= \sup\{f(a+r): r\in P\},\\
\overline{f}(b+P\cap B) &= \sup\{f(b+r): r\in P\cap B\}
\end{align*}
for all $a\in A, b\in B$. It is clear that, for any $b\in B$, we have $\overline{f}(b+P\cap B) \leq f_P(b+P)$.

We may now invoke \cite[Theorem 3.17(ii)]{letzter-noeth-skew}, which holds with an identical proof even under our slightly weaker hypotheses. This tells us that $P\cap B$ must be a $(\sigma_n, \delta_n)$-prime ideal of $B$. In particular, it also cannot contain any \emph{power} of $\mathfrak{m}_B$, as $\mathfrak{m}_B$ is a $(\sigma_n, \delta_n)$-invariant ideal. In particular, given any large $N\in\mathbb{N}$, we can find an element $b\in \mathfrak{m}_B^N \setminus (P\cap B)$, which must satisfy $b+ P \neq 0$ and $f_P(b+P) \geq \overline{f}(b+P\cap B) \geq eN$. In other words: $(B+P)/P \leq A/P = R$ contains elements of arbitrarily large value under $f_P$, which ensures that $\gr(R)$ cannot be finite-dimensional as an $\mathbb{F}_p$-vector space. This implies that $\gr(R)_{\neq 0}$ contains a non-nilpotent element, and so we may apply Theorem \ref{A}.
\end{ex}

\begin{ex}\label{ex: iwasawa algebras}
Let $G$ be a uniform pro-$p$ group in the sense of \cite{DDMS}, $H$ a closed isolated normal subgroup satisfying $G/H \cong \mathbb{Z}_p$, and $k = \mathbb{F}_p$ or $\mathbb{Z}_p$. Then we may consider the completed group algebra (\emph{Iwasawa algebra}) $kH$. It is known (see e.g. \cite{ardakovbrown, venjakob}) that $kG$ may be written as $kH[[x; \sigma, \delta]]$, where $x = g-1$, $\sigma$ is conjugation by $g$ and $\delta = \sigma - \mathrm{id}$ for any $g\in G\setminus H$ such that $G = \overline{\langle H, g\rangle}$.

Now, for every $G$-invariant (= $(\sigma, \delta)$-invariant) prime ideal $P\lhd kH$, we may consider the ring $kG/PkG$ as a skew power series ring over $kH/P$ by \cite[Lemma 3.14(iv)]{letzter-noeth-skew}. If $H$ is solvable, then $kH/P$ satisfies the hypotheses of Example \ref{ex: iterated local SPS ring} (and hence of Theorem \ref{A}) whenever $P$ is not maximal; if not, a similar argument by considering the $H$-prime ideal $P\cap kK$, where $K$ is a maximal proper closed isolated normal subgroup of $H$, will suffice.

\textit{Remark.} In this context, Theorem \ref{A} tells us that the extension of $P$ to the ring $kH[[x_{(N)}; \sigma_{(N)}, \delta_{(N)}]]$ is prime. But note that $x_{(N)} = g^{p^N} - 1$, $\sigma_{(N)}$ is conjugation by $g^{p^N}$, and $\delta_{(N)} = \sigma_{(N)} - \mathrm{id}$: that is, if we set $G_P$ to be the (open, normal) subgroup of $G$ defined by $G_P := \overline{\langle H, g^{p^N}\rangle}$, this ring is just the completed group algebra $kG_P$, i.e. the extension $P\cdot kG_P$ is prime. Moreover, $kG = kG_P * (G/G_P)$, and $G/G_P \cong \mathbb{Z}/p^N\mathbb{Z}$: hence the ideal $P\cdot kG$ may be viewed as $(P\cdot kG_P)*(\mathbb{Z}/p^N\mathbb{Z})$ as in the Introduction, and we may conclude Corollary \ref{cor: iwasawa algebra is semiprime} depending on the characteristic of $kG/P$.
\end{ex}

\bibliography{biblio}
\bibliographystyle{plain}

\end{document}